\documentclass{article}

\usepackage{amsmath,amsfonts,bm}

\def\eqref#1{equation~\ref{#1}}
\def\Eqref#1{Equation~\ref{#1}}

\def\1{\bm{1}}

\DeclareMathAlphabet{\mathsfit}{\encodingdefault}{\sfdefault}{m}{sl}
\SetMathAlphabet{\mathsfit}{bold}{\encodingdefault}{\sfdefault}{bx}{n}

\newcommand{\R}{\mathbb{R}}

\usepackage{amsmath}
\usepackage{mathtools}
\usepackage{amsthm}
\usepackage{xcolor}

\newtheorem{theorem}{Theorem}[section]
\newtheorem{lemma}[theorem]{Lemma}
\newtheorem{assumption}[theorem]{Assumption}

\newtheorem{definition}{Definition}[section]
\newtheorem{remark}{Remark}[section]

\usepackage{hyperref}
\usepackage{url}
\usepackage{amsmath,amssymb,mathtools} 
\usepackage{algorithm}                  
\usepackage{algpseudocode}              
\usepackage{float}                     
\usepackage{cleveref}   
\usepackage{multirow} 
\usepackage{PRIMEarxiv}
\usepackage{natbib}
\usepackage{amssymb}
\usepackage{amsmath}
\usepackage{array}
\usepackage[utf8]{inputenc} 
\usepackage[T1]{fontenc}    
\usepackage{hyperref}       
\usepackage{url}            
\usepackage{booktabs}       
\usepackage{amsfonts}       
\usepackage{nicefrac}       
\usepackage{microtype}      
\usepackage{lipsum}
\usepackage{fancyhdr}       
\usepackage{graphicx}       
\graphicspath{{media/}}     
\usepackage{subcaption} 
\usepackage{epstopdf}
\newcommand{\conv}{\operatorname{conv}}
\newcommand{\Per}{\mathrm{Per}}
\newcommand{\diam}{\mathrm{diam}}
\newcommand{\dist}{\operatorname{dist}}

\newcommand*\diff{\mathop{}\mathrm{d}}
  
\title{Data Completion for Electrical Impedance Tomography by Conditional Diffusion Models}

\author{Ke Chen \\
Department of Mathematical Sciences\\
University of Delaware\\
\texttt{kechen@udel.edu}\\
\And
Haizhao Yang \\
Department of Mathematics\\
Department of Computer Science\\
University of Maryland, College Park \\
\texttt{hzyang@umd.edu} \\
\And
Chugang Yi\thanks{ Corresponding author. Authors are listed in alphabetical order.} \\
Department of Mathematics \\
University of Maryland, College Park \\
\texttt{chugang@umd.edu} \\
}

\begin{document}
\maketitle
\begin{abstract}
Data scarcity is a fundamental barrier in Electrical Impedance Tomography (EIT), as undersampled Dirichlet-to-Neumann (DtN) measurements can substantially degrade conductivity reconstructions. We address this bottleneck by completing partially observed DtN measurements using a diffusion-based generative model. Specifically, we train a conditional diffusion model to learn the distribution of DtN data and to infer full measurement vectors given partial observations. Our approach supports flexible source–receiver configurations and can be used as a plug-in preprocessing step with off-the-shelf EIT solvers. Under mild assumptions on the polygon conductivity class, we derive nonasymptotic end-to-end bounds on the distributional discrepancy between the completed and ground-truth DtN measurements. In numerical experiments, we couple the proposed diffusion  completion procedure with a deep learning–based inverse solver and compare its performance against the same solver with full measurement data. The results show that diffusion completion enables reconstructions that closely match the full-data baseline while using only $1\%$ of the measurements. In contrast, standard baselines such as matrix completion require $30\%$ of the measurements to achieve similar reconstruction quality.
\end{abstract}

\section{Introduction}

Inverse problems governed by partial differential equations (PDEs) arise across a broad range of scientific and engineering domains, including medical imaging \citep{arridge1999optical,cheney1990noser,borcea2002electrical}, geophysical exploration \citep{tarantola1982generalized,pratt1999seismic}, and materials characterization \citep{bonnet2005inverse,garcia2011non}. 
Many such inverse problems are nonlinear and severely ill-posed \citep{bal2012introduction, uhlmann2009electrical, isakov2006inverse}, and 
Electrical Impedance Tomography (EIT) is a canonical example of these difficulties \citep{alessandrini1988stable, allers1991stability}. In EIT, the goal is to determine the conductivity distribution within a medium by applying voltage patterns on its boundary and measuring the induced current responses. One of the key components of EIT is the Dirichlet-to-Neumann (DtN) operator, which maps voltage patterns to current responses. Theoretically, the conductivity can be uniquely determined from full knowledge of the DtN operator under mild conditions \citep{nachman1996global, astala2006calderon,uhlmann2013inverse, sylvester1987global}. 
In practice, the continuous DtN operator is often discretized into a \emph{DtN matrix} containing voltage-current measurements on a finite number of boundary nodes. 
Typical methods for solving EIT include iterative solvers with regularization \citep{hanke1997regularizing, haber2000optimization,jin2012reconstruction, vauhkonen2002tikhonov}, direct methods \citep{chow2021direct,chow2014direct, mueller2020d}, and recent deep learning approaches \citep{hamilton2018deep, ong2022integral, guo2023transformer, cen2023electrical, chen2024pseudo, molinaro2023neural, bhat2025neural}. However, when only limited measurements are available, as is common in medical imaging \citep{heines2023clinical, putensen2019electrical, scaramuzzo2024electrical}, reconstruction pipelines that presuppose fully sampled DtN measurements often fail to produce reliable solutions. The scarcity or incompleteness of measurements thus poses a central challenge in practical EIT.

To address a limited measurement budget, active acquisition strategies such as optimal experimental design (OED) and reinforcement learning (RL) aim to identify measurement configurations or sensing policies that extract the most informative measurements from a limited number of experiments. Their effectiveness has been demonstrated in EIT \citep{hyvonen2024bayesian,jin2024continuous} and other inverse problems \citep{huan2013simulation,shen2022learning,alexanderian2021optimal,jiang2024reinforced,go2025accurate,koval2025non}. 
Although these approaches can improve the quality of the collected data, they typically require many forward and inverse solves. Moreover, partial measurements can hinder the use of off-the-shelf reconstruction methods that expect complete measurements. For instance, in inverse scattering, limited aperture data leads to artifacts known as ``shadow regions''~\citep{colton2006target, li2015recovering}, while in EIT partial boundary data exacerbates the loss of spatial resolution and causes geometrical distortion~\citep{hauptmann2017approximation, alsaker2017direct}.

Another line of work seeks to reconstruct the full measurements from partial observations in many inverse problems \citep{harrach2015interpolation, hauptmann2017approximation,caubet2019new,dou2022data,bui2022bridging}, thereby bridging the gap to inverse solvers that presuppose complete measurements. For EIT, \citet{harrach2015interpolation} predict the missing voltages on current-driven boundary nodes in the difference Neumann-to-Dirichlet (NtD) matrix, relying on the geometry-dependent smoothness of difference NtD data; \citet{hauptmann2017approximation} approximate continuum NtD data from boundary measurements via a two-stage optimization that first lifts voltages to smooth boundary traces using Tikhonov regularization and then applies a prior-regularized least-squares fit to recover an approximate full NtD map; and the most closely related work, \citet{bui2022bridging}, completes unobserved entries in the off-diagonal blocks of the hierarchically partitioned DtN matrix by exploiting the low-rank structure inherited from the underlying elliptic operator \citep{bebendorf2003existence}. Specifically, \citet{bui2022bridging} achieve high-accuracy completion when the off-diagonal blocks of the DtN matrix are randomly sampled at a moderate sampling rate. 

These EIT data completion methods \citep{harrach2015interpolation,hauptmann2017approximation,bui2022bridging} leverage PDE-induced smoothness or low-rank structure, but may become less reliable under highly structured missingness, extremely sparse electrode layouts, or strong measurement noise. They typically perform completion in an instance-wise manner, requiring a problem-specific interpolation, regularization or matrix-completion routine to be executed for each new measurement set and acquisition configuration. In contrast, we propose a data-driven completion framework based on conditional diffusion models. By exploiting the low-dimensional structure of the conductivity distribution, our method amortizes completion across instances and enables accurate DtN completion under flexible and challenging measurement configurations. Diffusion models, also known as score-based generative models, have achieved remarkable success in learning complex distributions from high-dimensional data \citep{song2020score, ho2020denoising}, and recent theory shows that they effectively capture low-dimensional structure in high-dimensional distributions \citep{tang2024conditional,azangulov2024convergence, liang2025low}, a property that has been leveraged in recent EIT works \citep{alberti2022inverse, alberti2024manifold}. Building on the low-dimensional manifold structure of the conductivity distribution and the continuity of the forward map with respect to the conductivity, we show that the induced distribution of DtN measurements can be learned efficiently using a conditional diffusion model. The main contributions of this work can be summarized as follows: 
\begin{itemize}
  \item \textbf{DtN completion via conditional diffusion.} To the best of our knowledge, this is the first framework that learns the distribution of (discrete) DtN measurements via a diffusion model and performs DtN completion by posterior sampling conditioned on partial observations.

\item \textbf{Extreme undersampling performance.} Due to the ill-posedness of EIT, DtN measurements from different conductivities can be nearly indistinguishable, which hinders learning under heavy missingness. Our EIT-tailored normalization enhances contrast across measurements and stabilizes optimization of the neural network, enabling accurate completion from only $1\%$ of the measurements, a regime in which low-rank matrix completion fails.
\item \textbf{Flexible measurement configurations.} Whereas matrix completion relies on random missingness, our method accommodates flexible source–receiver patterns by encoding partial measurements as conditioning inputs to the score network. Thus our model can learn to infer missing entries under varied masks, providing robust completion under diverse measurement configurations.

  \item \textbf{Convergence guarantees.} By bridging diffusion models and EIT theory, we establish non-asymptotic end-to-end bounds for our DtN completion algorithm for polygonal conductivities.
  \item \textbf{Plug-and-play with inverse solvers.} The completed measurements can be used as a preprocessing step for off-the-shelf EIT solvers; when preceded by our completion step, a deep inverse network yields substantially better reconstructions than the same architecture trained directly on partial data.
\end{itemize}

The remainder of the paper is organized as follows. Section~\ref{sec:problem_setup} formalizes the problem; Section~\ref{sec:method} presents our diffusion completion framework for DtN measurements; Section~\ref{sec:theory} develops the error analysis; and Section~\ref{sec:numeric} reports numerical results.

\section{Problem Setup}\label{sec:problem_setup} 
Let $\Omega \subset \mathbb{R}^2$ be the unit disk, i.e., $\Omega = \{(x, y) \mid x^2 + y^2 < 1\} \subset \mathbb{R}^2$. The unknown parameter of interest is the conductivity $\gamma: \Omega \to \mathbb{R}^+$, where $\gamma(x)$ represents the conductivity at point $x \in \Omega$ and $\mathbb{R}^+$ denotes the set of positive real numbers. The forward problem is to solve the elliptic equation
\begin{equation}\label{eq:forward}
\nabla \cdot (\gamma(x)\nabla u(x)) = 0 \quad \text{in } \Omega, \quad u(x) = f(x) \quad \text{on the boundary} \, \partial \Omega\,,
\end{equation}
where $f: \partial \Omega \to \mathbb{R}$ is a prescribed voltage function on the boundary $\partial \Omega$. The resulting solution $u_{\gamma}^{f}(x)$ of \Eqref{eq:forward} determines the induced boundary current
\begin{equation}\label{eq:current}
g(x) = \gamma(x)\frac{\partial u_{\gamma}^{f}}{\partial n}(x) \quad \text{on } \partial\Omega\,,
\end{equation}
where $\frac{\partial}{\partial n} $ denotes the outward normal derivative. 
The mapping $f \mapsto g$ in \Eqref{eq:current} defines the Dirichlet-to-Neumann operator
\begin{equation}\label{eq:DtN_cont}
\Lambda_\gamma: H^{1/2}(\partial\Omega) \to H^{-1/2}(\partial\Omega)\,, 
\end{equation}
which takes a boundary voltage function $f$ on $\partial\Omega$ and returns the corresponding boundary current $g$. Here $H^{\pm 1/2}(\partial\Omega)$ is the standard Sobolev Hilbert spaces on $\partial\Omega$ with inner products
$\langle\cdot,\cdot\rangle_{H^{\pm 1/2}}$ and norms $\|\cdot\|_{H^{\pm 1/2}}$.

The weak form of \Eqref{eq:forward} can be formulated as find $u \in H^{1}(\Omega)$, $u|_{\partial \Omega} = f$ such that
\begin{equation}\label{eqn:weak_form}
    \int_{\Omega} \gamma(x) \nabla u \cdot \nabla v\diff x = 0,\quad \forall v \in H^{1}_{0}(\Omega)\,.
\end{equation}
Thus we could define DtN operator
\begin{equation}\label{weakform}
    \langle\Lambda_\gamma f, \psi\rangle = \int_{\partial\Omega} \gamma \frac{\partial u_{\gamma}^{f}}{\partial n} \psi \diff s\,.
\end{equation}
for $\psi \in H^{1/2}_0(\partial\Omega)$.

In EIT, we aim to recover $\gamma(x)$ from multiple data pairs $\{(f_i,g_i)\}$, where the input boundary voltage $f_i$ depends on experimental design. Ideally, if one could measure $g$ for all admissible $f$, $\gamma(x)$ can be recovered in theory \citep{uhlmann2009electrical,isakov2006inverse, astala2006calderon}. However, as infinitely many measurements are not attainable, using a finite number of measurement pairs $(f_i,g_i)$ is of more interest in practice.

\paragraph{Finite element discretizations} 
We approximate $\Omega$ by a polygonal domain $\Omega_h$ with a shape-regular triangulation $\mathcal T_h$ and use the linear ($P_1$) finite element space $V_h\subset H^1(\Omega_h)$ of continuous, piecewise-linear functions on $\mathcal T_h$. Let $\{\phi_k\}_{k=1}^{N_{\text{el}}}$ be the nodal basis of $V_h$, and split the indices into interior nodes $\mathcal{I}$ with $N_{\mathcal{I}}=|\mathcal{I}|$ and boundary nodes $\mathcal{B}$ with $N_{\mathcal{B}}=|\mathcal{B}|$. Define the subspace $V_h^0 = \{v^h \in V_h: v^h |_{\partial\Omega_h}=0 \}$ and the trace space $V_h(\partial \Omega_h) = \operatorname{span}\{\phi_j |_{\partial\Omega_h}, j \in \mathcal{B}\}$.
Given boundary data $f^h \in V_h(\partial \Omega_h)$, \Eqref{eqn:weak_form} now can be discretized as follows:
\[
  \text{Find } u^h \in V_h \text{ with } u^h|_{\partial \Omega_h}=f^h,  \text{ such that }a(u^h, v^h) = \int_{\Omega_h} \gamma \nabla u^h \cdot \nabla v^h \diff x=0, \quad \forall v^h \in V_h^0\,.
\]
We assemble the stiffness matrix $\mathbf{K} \in \mathbb{R}^{N_{\text{el}} \times N_{\text{el}}}$ with entries $\mathbf{K}_{ij}=a(\phi_i, \phi_j)$ and denote by $\mathbf{u}$, $\mathbf{f}$ and $\mathbf{g}$ the corresponding vectors by tabulating its coefficient of the FEM basis $\{\phi_i\}_{i \in \mathcal{B} \cup \mathcal{I}}$ for $u^h, f^h$ and $g^h$ respectively.

We partition $\mathbf{K}$ and $\mathbf{u}$ with index sets $\mathcal{I}$ and $\mathcal{B}$ with
\begin{equation*}
\mathbf{K}=
\begin{bmatrix}
    \mathbf{K}_{\mathcal{I}\mathcal{I}} & \mathbf{K}_{\mathcal{I}\mathcal{B}} \\
    \mathbf{K}_{\mathcal{BI}} & \mathbf{K}_{\mathcal{BB}}
\end{bmatrix} \quad \text{and} \quad \qquad\mathbf{u} = \begin{bmatrix}
    \mathbf{u_{\mathcal{I}}}\\ \mathbf{u}_{\mathcal{B}}
\end{bmatrix} \,.
\end{equation*}

Let the discrete Neumann data vector $\mathbf{g} \in \mathbb{R}^{N_{\mathcal{B}}}$ be defined by $g_j = a(u^h, \phi_j)$ for $j = 1, \dots, N_\mathcal{B}$. Eliminating the interior degrees of freedom yields the block relation
\begin{equation}
\label{eq:g_lambda}
\mathbf{g}=\mathbf{K}_{\mathcal{B B}}\mathbf{f}+ \mathbf{K}_{\mathcal{BI}} \mathbf{u}_{\mathcal{I}}=\underbrace{\left(\mathbf{K}_{\mathcal{B B}}-\mathbf{K}_{\mathcal{BI}} \mathbf{K}_{\mathcal{II}}^{-1} \mathbf{K}_{\mathcal{I B}}\right)}_{\mathbf{\Lambda}_\gamma} \mathbf{f} \,.  
\end{equation}
Therefore, the discrete DtN matrix $\mathbf{\Lambda}_{\gamma}$ is the Schur complement of the stiffness matrix $\mathbf{K}$ with respect to the interior block $\mathbf{K}_{\mathcal{II}}$. If the $k$-th experiment impose the canonical boundary voltage $\mathbf{f} = \mathbf{e}_k$, then $\mathbf{g} = \mathbf{\Lambda}_\gamma \mathbf{e}_k$, so the corresponding Neumann measurement vector is exactly the $k$-th column of $\mathbf{\Lambda}_\gamma$. 
Consequently, under the canonical set of boundary excitations, $\mathbf{\Lambda}_\gamma$ coincides with the complete discrete current-measurement table on the prescribed boundary node set. 
More generally, for a multi-source experiment, we collect all boundary voltage vectors $\{\mathbf{f}_k\}$ as the columns of a matrix $\mathbf{F} \in \mathbb{R}^{N_\mathcal{B}\times N_\mathcal{B}}$, and the associated Neumann measurements are then assembled as $\mathbf{\Lambda}_\gamma \mathbf{F}$. Without loss of generality, we restrict attention to the canonical experiment design $\mathbf{F} = \mathbf{I}$, in which case the measurement matrix coincides with the DtN matrix.

\paragraph{Partial observations and data completion}
 Ideally, if all chosen boundary nodes are applied and the currents are measured at all boundary nodes, one would fully determine the $N_{\mathcal{B}} \times N_{\mathcal{B}}$ DtN matrix $\mathbf{\Lambda}_{\gamma}$. However, practical and economic constraints often prevent running all possible experiments or taking all desired measurements. In practice, only a small number of experiments can be
conducted, and in each experiment, only a small number of measurements can be
taken, which results in only a subset of DtN matrix entries being measured. Let $\mathbf{M}_s\in\{0,1\}^{N_{\mathcal B}\times N_{\mathcal B}}$ be a binary mask matrix with $(\mathbf{M}_s)_{ij}=1$ if the entry $(i,j)$ is observed and $0$ otherwise, where $s=\frac{\| \mathbf{M}_s\|_0} {N^2_\mathcal{B}}$ is the sampling rate, i.e., the portion of observed entries over all entries. Thus the \emph{configuration} of how we obtain the DtN measurements is encoded by $\mathbf{M}_s$, and the observed (incomplete) DtN measurements are denoted by
\[
\mathbf{\Lambda}^{\text{o}}_\gamma \;=\; \mathbf{M}_s \odot \mathbf{\Lambda}_\gamma\,,
\]
where $\odot$ denotes the Hadamard product. We emphasize that \(\mathbf M_s\) is an abstract encoding of the acquisition pattern and may represent highly structured missingness (e.g., missing columns corresponding to unperformed excitations, or subsampled receiver sets), not necessarily i.i.d.\ entrywise sampling.

Given the masked DtN matrix $\mathbf{\Lambda}^{\mathrm{o}}_\gamma$ and mask $ \mathbf{M}_s$, our completion task is to infer the missing entries, i.e. estimate a completed matrix $\widehat{\mathbf{\Lambda}}_\gamma$ that is close to the true $\mathbf{\Lambda}_\gamma$.

\section{Method}\label{sec:method}

In this section, we will formulate the data completion problem by conditional sampling solved by diffusion model. As multiple completions may plausibly explain the same partial observations, we model $\mathbf{\Lambda}_\gamma$ as a random matrix whose randomness comes from some distribution of $\gamma$ and the goal is to generate samples from the learned conditional distribution $p\!\left(\mathbf{\Lambda}_{\gamma}\,\middle|\,\mathbf{\Lambda}^{\mathrm{o}}_{\gamma}, \mathbf{M}_s\right)$. 
The generated posterior samples can then be used to estimate the full DtN matrix $\hat{\mathbf{\Lambda}}_\gamma$, and standard inverse solvers including iterative solvers, optimization-based reconstruction, or deep learning-based methods can be applied to recover conductivity $\gamma$.
The key is to ensure that $\hat{\mathbf{\Lambda}}_\gamma$ is as close as possible to the true underlying operator $\mathbf{\Lambda}_\gamma$, thereby achieving better results in the downstream inversion than the same solver without completion.

\subsection{Diffusion Model} 

Given i.i.d.\ samples from a target distribution $p_{\text{target}}$, a diffusion model seeks to generate new samples whose distribution approximates $p_{\text{target}}$.
Let $\{\alpha_t\}_{t=1}^T\subset(0,1]$ and define $\bar{\alpha}_t\coloneqq \prod_{i=1}^t\alpha_i$.
Define the forward Markov chain by \(X_0\sim p_{\text{target}}\) and, for \(t=1,\dots,T\), \(X_t=\sqrt{\alpha_t}\,X_{t-1}+\sqrt{1-\alpha_t}\,W_t\) with \(W_t\stackrel{\text{i.i.d.}}{\sim}\mathcal N(0,I_d)\); equivalently, \(X_t\mid X_0\sim \mathcal N(\sqrt{\bar\alpha_t}\,X_0,(1-\bar\alpha_t)I_d)\), hence \(X_t=\sqrt{\bar\alpha_t}\,X_0+\sqrt{1-\bar\alpha_t}\,\overline W\) for \(\overline W\sim\mathcal N(0,I_d)\).
This construction progressively perturbs $X_0$ with Gaussian noise over $t=1,\ldots,T$.
The noise schedule $\{\alpha_t\}$ is chosen so that $\bar{\alpha}_T\approx 0$, in which case the marginal distribution of $X_T$ is close to a standard Gaussian, i.e., $p_{X_T}\approx \mathcal N(0,I_d)$.

The forward process is reversible as shown in classical results from stochastic differential equation (SDE) \citep{anderson1982reverse}, which inspires the sampling algorithm from diffusion model. 
The key aspect of the sampling algorithm from diffusion model is to construct a reverse process $Y_t$, which starts from pure Gaussian noise and can gradually convert to a new sample $Y_0$ such that $p_{Y_0} \approx p_{\text{target}}$. In particular, the reverse process relies on the availability of the so called \emph{score function} $s_t^\star(\cdot)\coloneqq \nabla\log p_{X_t}(\cdot)$, thus diffusion model is also called score-based generative model \cite{song2020score}. A classical choice of reverse process is the Denoising Diffusion Probabilistic Model (DDPM) \citep{ho2020denoising} sampler as follows, which can be treated as a discretization of reverse SDE. 
\begin{definition}[Unconditional DDPM sampler]\label{def:ddpm}
Given score estimator $s_t(\cdot)$ that approximate the true score function $s_t^{\star}(\cdot)$, i.e., $s_t(\cdot) \approx s_t^{\star}(\cdot)$ and the same noise schedule $\{\alpha_t\}$ as in forward process, DDPM sampler is defined as
\begin{equation}
Y_T\sim\mathcal N(0,I_d),\qquad
Y_{t-1}
=\frac{1}{\sqrt{\alpha_t}}\Big(Y_t+(1-\alpha_t)\,s_t(Y_t)+\sigma_t W_t\Big),
\quad W_t\stackrel{\text{i.i.d.}}{\sim}\mathcal N(0,I_d)\,,
\end{equation}
with
\begin{equation}
\sigma_t
\;=\;\sqrt{\frac{(\alpha_t-\bar{\alpha}_t)(1-\alpha_t)}{1-\bar{\alpha}_t}}\,,\qquad t=T,\ldots,1\,.
\end{equation}
\end{definition}
When a score estimator $s_t(\cdot) \approx s_t^\star(\cdot)$ is available, DDPM sampler \ref{def:ddpm} provides a way to generate new samples from the target distribution \citep{ho2020denoising}. In practice, the score estimator is parametrized by a neural network $s_{\theta}(\cdot, t)$, and the parameters $\theta$ can be learned through the \emph{score matching loss} \citep{ho2020denoising, song2020score, vincent2011connection}:
\begin{equation}
\label{eq:score-matching} 
    \min_{\theta} \mathbb{E}_{t \sim \mathrm{Unif}{(1,\dots,T)}, X_0 \sim p_{\text{target}}, X_t \sim p_{X_t \mid X_0}} \lambda(t)\| s_{\theta}(X_t,t) - \nabla_{x_t} \log p(X_t \mid X_0) \|^2\,,
\end{equation}
where $t$ follows a uniform distribution over $1,2,\dots,T$, $\lambda(t)$ is a positive weighting function, and $\nabla\log p(X_t \mid X_0)$ can be computed \emph{analytically} from the forward diffusion process. Concretely, the expectation can be approximated by the Monte Carlo estimation. Each training iteration (1) draws a batch $\{ X_0^{(i)}\}_{i=1}^B$ with size $B$ from dataset $\mathcal{D}_{\text{train}}$, (2) samples an independent time step $t_i \sim \mathcal{DU}\{1,\dots,T\}$ and noise $W^i \sim \mathcal{N}(0,I_d)$ for each sample, and (3) forms a noised input $X_{t_i}^{(i)} = \sqrt{\bar\alpha_{t_i}}\,X_0^{(i)} + \sqrt{1-\bar\alpha_{t_i}}\,W^{i}$. We then compute the regression target $s_{t_i}(X_{t_i}^{(i)}) = -\dfrac{X_{t_i}^{(i)}-\sqrt{\bar\alpha_{t_i}}\,X_0^{(i)}}{1-\bar\alpha_{t_i}}$, evaluate the weighted mean-squared error $\lambda(t_i)\| s_{\theta}(X^{(i)}_{t_i},t_i) - s_{t_i}(X_{t_i}^{(i)}) \|^2$ over the batch, and update $\theta$ with standard deep learning solver, e.g. AdamW \citep{loshchilov2017decoupled}. 
At the population level, \Eqref{eq:score-matching} is minimized when $s_\theta(\cdot,t)$ matches the true marginal score $s_t^\star(x)\coloneqq\nabla_x\log p_t(x)$ (where $p_t$ is the law of $X_t$ induced by the forward diffusion); in well-specified and identifiable parametric settings, minimizers of the empirical (Monte Carlo) counterpart converge in probability to a population minimizer and hence yield a consistent score estimator \citep{hyvarinen2005estimation,song2020score}.

With standard assumptions on sufficiency of the model capacity, the optimal solution of ~\Eqref{eq:score-matching}, denoted as $\theta^\star$, is a consistent estimator, i.e., it converges in probability towards
the true value of $\theta$ when sample size goes infinity \citep{song2020score, hyvarinen2005estimation}.

\subsection{Conditional Diffusion Models for Data Completion}
\label{sec:proposed_method}
Diffusion model is also flexible in incorporating a conditional covariate, denoted as $Z$, to guide the generation of new samples.  For each fixed $Z=z$, DDPM sampler~\ref{def:ddpm} can be extended to conditional generation when the conditional score estimator $s_t(\cdot, z) \approx s_t^{\star}(\cdot,z)$ is given, where $s_t^{\star}(\cdot,z)=\nabla \log p_{X_t|z}(\cdot)$. The following conditional DDPM sampler can generate a new sample $Y_0$ such that $p_{Y_0} \approx p_{X_0 \mid Z=z}$.  
\begin{definition}[Conditional DDPM sampler]\label{def:cond-DDPM-sampler}
For a fixed $Z=z$, given conditional score estimator $s_t(\cdot,z)$ that approximate the true conditional score function $s_t^{\star}(\cdot,z)$, i.e. $s_t(\cdot,z) \approx s_t^{\star}(\cdot,z)$, the conditional DDPM sampler is defined as 
\begin{equation}
Y_T\sim\mathcal N(0,I_{d}),\qquad
Y_{t-1}=\frac{1}{\sqrt{\alpha_t}}\Big(Y_t+(1-\alpha_t)\,s_t(Y_t, z)+\sigma_tW_t\Big),
\ \ W_t\stackrel{\text{i.i.d.}}{\sim}\mathcal N(0,I_{d})\,.
\end{equation}

Here $\alpha_t$, $\bar{\alpha}_t$, $\sigma_t$ are the same as defined in unconditional DDPM sampler~\ref{def:ddpm}.
\end{definition}

The conditional score function can also be parametrized by a neural network $s_{\theta}(\cdot, z, t)$, and learned by following conditional score matching,
\begin{equation}
\label{eq:cond-score-matching}
    \min_{\theta} \mathbb{E}_{t \sim \mathrm{Unif}{(1,\dots,T)}, (X_0, Z) \sim p_{(X_0,Z)}, X_t \sim p_{X_t \mid X_0}} \lambda(t)\| s_{\theta}(X_t,Z,t) - \nabla_{x_t} \log p(X_t \mid X_0) \|^2\,,
\end{equation}
where $p_{(X_0, Z)}$ is the joint distribution of target $X_0$ and condition $Z$. 
Although $p_{X_t \mid Z}$ does not appear in the loss function explicitly, it has been proved in \citet{batzolis2021conditional,vincent2011connection} that $s_{\theta^\star}(X, Z, t)$ is a consistent estimator of $\nabla\log p(X_t \mid Z)$ under some standard assumptions, where $\theta^\star$ is the minimizer of ~\Eqref{eq:cond-score-matching}. In practice we approximate the expectation in \Eqref{eq:cond-score-matching} with the same Monte Carlo scheme as in the unconditional case, except that each training example carries its condition $Z$ and the network receives $(X_t,Z,t)$ as input. 

Recall that in EIT data completion, our goal is to sample $p(\mathbf{\Lambda}_\gamma \mid \mathbf{\Lambda}_{\gamma}^{\text{o}},\mathbf{M})$ for any given partial DtN measurements $\mathbf{\Lambda}_{\gamma}^{\text{o}}$ and mask $\mathbf{M}$. We instantiate the conditional diffusion model by setting target variable $X_0=\mathbf{\Lambda}_\gamma$ and conditional variable $Z=(\mathbf{\Lambda}_{\gamma}^{\text{o}},\mathbf{M})$. Throughout, we identify a DtN matrix $X_0\in\R^{N_{\mathcal B}\times N_{\mathcal B}}$ with its vectorization $\mathrm{vec}(X_0)\in\R^{d}$, $d=N_{\mathcal B}^2$; the forward/noising and reverse/sampling updates are applied to $\mathrm{vec}(X_0)$, and the matrix form is only a reshape used for notation and implementation.
Concretely, suppose various configurations $\mathbf{M}$ follow some distribution $\mathcal{M}$, we draw full DtN measurements $\{\mathbf{\Lambda}_{\gamma_i}\}_{i=1}^B$ from $\mathcal{D_{\text{train}}}$ and $\{\mathbf{M}^{(i)}\}_{i=1}^B \sim\mathcal{M}$. For each $i$, we form the pair sample 
\[
\{X_0^{(i)}, Z^i\} = \{\mathbf{\Lambda}_{\gamma_i},(\mathbf{M}^{(i)}\odot \mathbf{\Lambda}_{\gamma_i}, \mathbf{M}^{(i)})\}\,,
\] which yields samples from the joint distribution $p(X_0, Z)$. The conditional score $s_\theta(X_t,Z,t)$ is then trained exactly as in \Eqref{eq:cond-score-matching}. At test time, for a given partially observed DtN measurements $\mathbf\Lambda^{\mathrm{o}}_{\gamma}=\mathbf M^{\text{test}}\odot\mathbf\Lambda_{\gamma}^{\text{test}}$, we set $Z_{\text{test}}=(\mathbf\Lambda^{\mathrm{o}}_{\gamma}, \mathbf{M}^{\text{test}})$ and run the conditional DDPM sampler \ref{def:cond-DDPM-sampler} to generate $\widehat{\mathbf{\Lambda}}_\gamma = Y_0$ such that $\widehat{\mathbf{\Lambda}}_\gamma \approx \mathbf\Lambda_{\gamma}^{\text{test}}$. The training and sampling pseudo-codes are summarized in Algorithm \ref{alg:train-conditional-score} and Algorithm \ref{alg:complete-then-reconstruct}, respectively. 

\begin{algorithm}[H]
\caption{Training the conditional score network for DtN measurements completion}
\label{alg:train-conditional-score}
\begin{algorithmic}[1]
\Require Fully observed DtN dataset $\mathcal{D_{\text{train}}}=\{\mathbf{\Lambda}_{\gamma_i}\}_{i=1}^{N_{\text{train}}}$; measurement configuration distribution $\mathcal{M}$; schedule $\{\alpha_t\}_{t=1}^T$, weight $\lambda(t)$, batch size $B$; conditional score network $s_\theta(X_t,Z,t)$ to be optimized and optimizer
\While{not converged}
  \State Sample $\{\mathbf{\Lambda}_{\gamma_i}\}_{i=1}^B$ from $\mathcal{D}_{\text{train}}$, and $\{\mathbf{M}^{(i)}\}_{i=1}^B \sim \mathcal{M}$
  \State Set $X_0^{(i)} \gets\mathbf{\Lambda}_{\gamma_i}$ and conditions $Z^{i} \gets (\mathbf{M}^{(i)} \odot \mathbf{\Lambda}_{\gamma_i}, \mathbf{M}^{(i)})$
  \State Sample batch $\{t_i\}_{i=1}^B \sim \mathcal{DU}\{1,\dots,T\}$ and noise $\{W^{i}\}_{i=1}^B \sim \mathcal{N}(0,I_d)$ with same shape as $\mathbf{\Lambda}_{\gamma_i}$
  \State $X_{t_i}^{(i)} \gets \sqrt{\bar\alpha_{t_i}}\,X_0^{(i)} + \sqrt{1-\bar\alpha_{t_i}}\,W^{i}$ \Comment{forward diffusion}
  \State $s_{t_i}^\star(X_{t_i}^{(i)}) \gets -\dfrac{X_{t_i}^{(i)}-\sqrt{\bar\alpha_{t_i}}\,X_0^{(i)}}{1-\bar\alpha_{t_i}}$ \Comment{analytic score of $p(X^{(i)}_{t_i}\mid X_0^{(i)})$}
  \State $\mathcal{L} \gets \dfrac{1}{B}\sum_{i=1}^B \lambda(t_i)\,\big\|s_\theta(X_{t_i}^{(i)},Z^{i},t_i)-s_{t_i}^\star(X_{t_i}^{(i)})\big\|^2$
  \State Update $\theta$ using optimizer step to minimize $\mathcal{L}$
\EndWhile
\State \textbf{return} $\theta$ \Comment{trained conditional score network}
\end{algorithmic}
\end{algorithm}

\begin{algorithm}[H]
\caption{DtN measurements completion via conditional diffusion model}
\label{alg:complete-then-reconstruct}
\begin{algorithmic}[1]
\Require Observed (masked) DtN measurements $\mathbf{\Lambda}^{\text{o}}_{\gamma}$ and its mask $\mathbf{M}^{\text{o}}$; trained $s_\theta$; schedule $\{\alpha_t\}_{t=1}^T$; conditional DDPM solver \ref{def:cond-DDPM-sampler}; 

\State Initialize $Y_T \sim \mathcal{N}(0,I_d)$ with same shape as $\mathbf{\Lambda}_\gamma^{\text{o}}$, and set the condition $Z \gets (\mathbf{\Lambda}_\gamma^{\text{o}},\mathbf{M}^{\text{o}}) $
\For{$t=T,\,T\!-\!1,\,\dots,\,1$}
  \State Sample $W_t \sim \mathcal{N}(0,I)$ with same shape as $\mathbf{\Lambda}_\gamma^{\text{o}}$
  \State $Y_{t-1} \gets \dfrac{1}{\sqrt{\alpha_t}}\Big(Y_t + (1-\alpha_t)\, s_\theta(Y_t, Z, t) + \sigma_t W_t\Big)$ \Comment{Conditional DDPM \ref{def:cond-DDPM-sampler} update}
\EndFor
\State $\widehat{\mathbf{\Lambda}}_\gamma \gets Y_0$

\State \textbf{return} $\widehat{\mathbf{\Lambda}}_\gamma $ 
\end{algorithmic}
\end{algorithm}

\subsection{Solve the inverse problem by data completion}
\begin{figure}[!htb]
    \centering
    \includegraphics[width=1\linewidth]{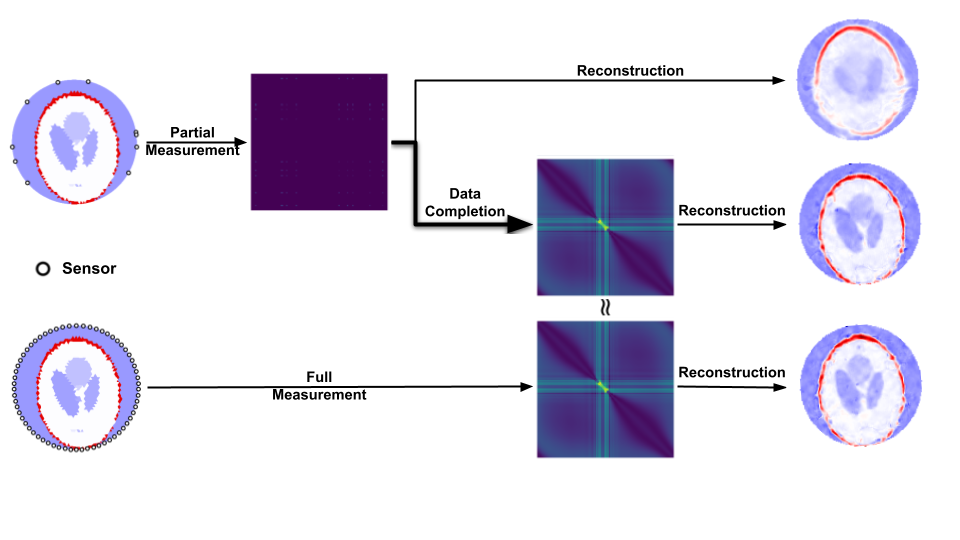}
\caption{Three strategies of solving EIT. Direct reconstruction \textbf{(first row)} from sparse measurements yields degraded reconstruction. We propose to first complete the DtN measurements and then reconstruct \textbf{(second row)}, yielding image quality close to those from a direct reconstruction from full measurements \textbf{(third row)}.
}
\label{fig:diagram}
\end{figure}
We illustrate our framework for solving the EIT inverse problem in Figure \ref{fig:diagram}. The figure compares three reconstruction strategies: direct reconstruction from sparse DtN measurements (first row), reconstruction from DtN measurements that are first completed and then passed to the same solver (second row), and direct reconstruction from fully sampled DtN measurements with the same solver (third row). The completed DtN measurements produced by our method are designed to be plug-and-play. Once the partially observed operator is completed to $\widehat{\mathbf{\Lambda}}_\gamma$, it can be passed to any inverse solver that presupposes full boundary data without modification. In our experiments, rather than using a classical PDE-based inverse solver, we adopt deep learning-based inverse solvers for their stable and excellent performance. Moreover, these deep learning models offer a unified pipeline to handle both full and partial measurements, making it particularly transparent to isolate and evaluate the effect of the data-completion step.

\section{Error Analysis}\label{sec:theory}
In this section, we provide an end-to-end error analysis for our algorithm. We are able to quantify the total variation (TV) discrepancy between the completed DtN measurements from our algorithm and that of the ground truth measurements. 
This section is organized as follows. In Section~\ref{sec:K-and-param} we first introduce the admissible class \(K\) of conductivities of $n_{\mathrm{v}}$-polygon shape, and its vertex parametrization $v$. 
We next show $K$ is compact and as a consequence we can construct finitely many local charts that cover \(K\) and yield bounded parameter patches $E_i$. 
In Section~\ref{sec:chartwise_DtN}, we show that the forward map \(v\mapsto\Lambda_{\gamma(v)}\) is \(C^1\) in operator norm in each patch. In Section~\ref{sec:chartwise_lipschitz_discretized_DtN}, we define a finite-dimensional DtN measurement map $F_N$ and establish its patchwise Lipschitz-type stability. In Section~\ref{sec:entropy-bdd-FN}, we leverage these patchwise stability bounds to control covering numbers for the induced measurement set. We then apply a conditional DDPM guarantee in Section~\ref{sec:ddpm_FNK} to translate these geometric controls together with a score-error assumption into the non-asymptotic DtN completion bound of Theorem~\ref{thm:cond-ddpm}. Overall, this section yields an end-to-end, non-asymptotic TV error guarantee for DtN completion over the polygonal conductivity class.

\paragraph{Notations.}
We denote by 
$\dist(\cdot,\cdot)$ the Euclidean distance between points or subsets in $\mathbb{R}^2$, and $|\cdot|$ the Euclidean norm in $\mathbb{R}^2$. 
We also use $|E|$ to denote the Lebesgue measure of a measurable set $E$ when no ambiguity arises.
For nonempty sets $A,B\subset\R^2$, the Hausdorff distance is  
\[
d_H(A,B)\coloneqq \max\Big\{\sup_{a\in A} \inf_{b \in B}\dist(a,b),\ \sup_{b\in B} \inf_{a \in A} \dist(b,a)\Big\}.
\]
For measurable sets $E,F\subset\R^2$, the symmetric difference is
$E\Delta F\coloneqq (E\setminus F)\cup(F\setminus E)$. We denote by $B_2(x,r)\coloneqq \{y\in\R^2:\ |y-x|\le r\}$ the closed Euclidean ball, and by
$\diam(\Omega)\coloneqq \sup\{|x-y|:\ x,y\in\Omega\}$ the diameter of a set $\Omega$.
For a simple polygon $P(v)$ with $n_v\in \mathbb{N}$ vertices $v=(v_i)_{i=0}^{n_{\mathrm v}-1}$, we write its perimeter as
\[
\Per(P(v))\coloneqq \sum_{i=0}^{n_{\mathrm v}-1}|v_{i+1}-v_i|,\qquad (v_{n_{\mathrm v}}\coloneqq  v_0)\,.
\]
Throughout the paper, we always assume the vertices $v_i$'s for a polygon $P(v)$ are ordered in the counterclockwise order.
For a set $E\subset\R^{2n_{\mathrm v}}$, we write $\conv(E)$ for its convex hull.
Finally, for a measurable set $E\subset\Omega$, $\chi_{E}$ denotes its indicator function.

\subsection{Admissible conductivities and parametrization}\label{sec:K-and-param}

Consider any integer $n_{\mathrm v}\ge 3$,
the vertex list $v=(v_0,\dots,v_{n_{\mathrm v}-1})\in(\R^2)^{n_{\mathrm v}}$ consists pairwise distinct points with $v_{n_{\mathrm v}}:=v_0$. We define the closed polygonal chain
\[
\partial P(v)\coloneqq \bigcup_{i=0}^{n_{\mathrm v}-1}[v_i,v_{i+1}],
\qquad
[v_i,v_{i+1}]\coloneqq \{(1-t)v_i+t v_{i+1}:t\in[0,1]\}.
\]
We denote by $\mathcal V^{n_{\mathrm v}}$ the set of vertex lists $v$ such that $\partial P(v)$ is a
simple (non self-intersecting) closed polygonal curve and the vertices are listed counterclockwise.
For $v\in\mathcal V^{n_{\mathrm v}}$, the Jordan curve theorem yields a unique bounded connected component
$\operatorname{int}(P(v))$ of $\R^2\setminus \partial P(v)$; we define the (closed) $n_{\mathrm v}$-gon $P(v)\coloneqq \overline{\operatorname{int}(P(v))}$. Note that the parametrization $v\mapsto P(v)$ is not injective due to the ordering of vertices (cyclic shifts and reversal).

For $v\in\mathcal V^{n_{\mathrm v}}$, we denote by $\beta_i(v)$
the interior angle of $P(v)$ at the vertex $v_i$. We also define the polygon norm
\[
\|v\|_{\mathrm{poly}}\coloneqq \max_{0\le i\le n_{\mathrm v}-1}|v_i|,
\qquad
\|v-w\|_{\mathrm{poly}}\coloneqq \max_{0\le i\le n_{\mathrm v}-1}|v_i-w_i|.
\]
Identifying $v$ with an element of $\R^{2n_{\mathrm v}}$ by concatenation, we have
$\|v\|_{\mathrm{poly}}\le \|v\|_2 \le \sqrt{n_{\mathrm v}}\,\|v\|_{\mathrm{poly}}$.

\begin{assumption}[Admissible polygon classes]\label{ass:admissible_poly}
Fix integers $n_{\mathrm v}\ge 3$ and constants $\beta_0\in(0,\pi/2)$, $d_0>0$, $d_1>0$.
Let $\Omega\subset\R^2$ be the unit disk.
\begin{definition}[Admissible polygon class]\label{def:admiss}
Define the \emph{admissible polygon class} $\mathcal A$ as the set of all polygons $P=P(v)$ with their vertices $v\in\mathcal V^{n_{\mathrm v}}$
such that:
\begin{enumerate}
\item[(A1)] $P\subset\Omega$ and $\dist(P,\partial\Omega)\ge d_0$\,;
\item[(A2)] every side length satisfies $|v_{i+1}-v_i|\ge d_1$ for all $i$ (indices mod $n_{\mathrm v}$)\,;
\item[(A3)] every interior angle satisfies $\beta_0\le \beta_i(v)\le \pi-\beta_0$ for all $i$; 
\end{enumerate}
\end{definition}
\begin{remark}
    The above assumptions in ~Definition \ref{def:admiss} imply that $\mathcal{A}$ consists of convex polygons with nonzero side length that are away from the boundary $\partial\Omega$.
    These assumptions satisfy the geometric hypotheses used in \citealp[Prop.~3.3]{beretta2022global}. In particular, since we only consider on convex polygon, the uniform Lipschitz boundary assumption~\ref{def: lipschitz_bdy_class} in \citealp[Prop.~3.3]{beretta2022global} can be derived by A1--A3. We show this in the Appendix~\ref{sec:lipschitz_bdy_class}. 
\end{remark}
\begin{definition}[Relaxed admissible polygon classes]
Define the \emph{relaxed class} $\mathcal A_{1/2}$ by requiring the same conditions as above,
but with \emph{strict} inequalities, and the parameters are relaxed to half of their values in $\mathcal{A}$:
\begin{enumerate}
\item[(A1$'$)] $P\subset\Omega$ and $\dist(P,\partial\Omega) > d_0/2$;
\item[(A2$'$)] $|v_{i+1}-v_i| > d_1/2$ for all $i$ (indices mod $n_{\mathrm v}$)\,;
\item[(A3$'$)] every interior angle satisfies $\frac{\beta_0}{2}< \beta_i(v)< \pi-\frac{\beta_0}{2}$ for all $i$;
\end{enumerate}
\end{definition}
\end{assumption}

We say $v$ is admissible if $P(v) \in \mathcal{A}$ and  relaxed admissible if $P(v) \in \mathcal{A}_{1/2}$. 
For an admissible polygon $P$ we consider conductivity
\[
\gamma_{P}\coloneqq 1+(\kappa-1)\chi_{P}\,.
\]

Finally, we define the relaxed and admissible conductivity classes
\[
M\coloneqq \{\gamma_P:\ P\in\mathcal A_{1/2}\},
\qquad
K\coloneqq \{\gamma_P:\ P\in\mathcal A\}\subset M\,.
\]

\begin{remark}[A priori data]\label{rem:apriori_poly}
All constants introduced below depend only on the \emph{a priori data}
\[
(\Omega,\kappa,n_{\mathrm v},d_0,d_1, \beta_0),
\]
and are independent of the discretization level $N$ and the number of diffusion steps $T$ in later Section \ref{sec:chartwise_lipschitz_discretized_DtN}, Section \ref{sec:entropy-bdd-FN} and Section \ref{sec:ddpm_FNK}.
\end{remark}

\begin{lemma}[Local admissibility radius]\label{lem:rho_s}
Fix $P^\ast\in\mathcal A_{1/2}$ and let $v^\ast$ be its counterclockwise ordered vertex list.
Then there exists $\rho_{P^\ast}>0$ such that whenever $\|v-v^\ast\|_{\mathrm{poly}}<\rho_{P^\ast}$,
the polygonal chain $\partial P(v)$ is simple and $P(v)\in\mathcal A_{1/2}$.
\end{lemma}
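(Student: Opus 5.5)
The plan is a continuity argument: every defining condition of $\mathcal A_{1/2}$ is a strict inequality in quantities that depend continuously on $v$ near $v^\ast$. The one exception is simplicity of the chain, which is not a strict inequality, and I will handle it separately using convexity.

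Set $\delta\coloneqq \|v-v^\ast\|_{\mathrm{poly}}$. Since $P^\ast\in\mathcal A_{1/2}$, there are positive slacks
\[
\epsilon_1\coloneqq \dist(P^\ast,\partial\Omega)-d_0/2,\quad
\epsilon_2\coloneqq \min_i|v^\ast_{i+1}-v^\ast_i|-d_1/2,\quad
\epsilon_3\coloneqq \min_i\min\{\beta_i(v^\ast)-\tfrac{\beta_0}{2},\ \pi-\tfrac{\beta_0}{2}-\beta_i(v^\ast)\}.
\]
Each slack is a minimum of finitely many strictly positive numbers, so it is positive.

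\textbf{Side lengths.} By the triangle inequality, $|v_{i+1}-v_i|\ge |v^\ast_{i+1}-v^\ast_i|-2\delta$. Hence (A2$'$) holds as soon as $\delta<\epsilon_2/2$. This also keeps consecutive vertices distinct.

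\textbf{Angles and simplicity.} I will track signed turning angles instead of interior angles directly, since the latter are only defined once the chain is known to be simple. Let $\theta_i(v)\in(-\pi,\pi)$ be the signed angle from $v_i-v_{i-1}$ to $v_{i+1}-v_i$.
\begin{itemize}
\item Because the edge vectors stay bounded away from zero (their lengths are at least $d_1/2$), $\theta_i$ depends continuously on $v$, with a modulus computable from $\delta/(d_1/2)$.
\item For $v^\ast$ we have $\theta_i(v^\ast)=\pi-\beta_i(v^\ast)\in(\beta_0/2,\pi-\beta_0/2)$. In particular all turns are strictly left, with total turning $2\pi$.
\item For $\delta$ small, every $\theta_i(v)$ still lies in $(\beta_0/2,\pi-\beta_0/2)$. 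The total turning $\sum_i\theta_i(v)$ is a continuous function of $v$ taking values in $2\pi\mathbb Z$, so it stays equal to $2\pi$.
\end{itemize}
A closed polygon whose turns are all strictly in $(0,\pi)$ and whose total turning is exactly $2\pi$ is the boundary of a strictly convex polygon. This is the classical characterisation of convex polygons: the edge directions are strictly monotone through one full revolution, so every line through an edge supports the chain. Consequently $\partial P(v)$ is simple, is positively oriented, and has interior angles $\beta_i(v)=\pi-\theta_i(v)$. This gives (A3$'$) once the angle perturbation is below $\epsilon_3$.

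\textbf{Distance to the boundary.} $P(v)$ is convex, so $P(v)=\conv\{v_i\}$. Every point of $\conv\{v_i\}$ lies within $\delta$ of $\conv\{v^\ast_i\}=P^\ast$, so $d_H(P(v),P^\ast)\le\delta$. Therefore $\dist(P(v),\partial\Omega)\ge \dist(P^\ast,\partial\Omega)-\delta$ and $P(v)\subset\Omega$ when $\delta<\epsilon_1$. This gives (A1$'$).

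\textbf{Choice of radius.} Take $\rho_{P^\ast}$ to be the minimum of $\epsilon_1$, $\epsilon_2/2$, and the angle threshold from the previous step. Then both conclusions of the lemma hold for every $v$ with $\|v-v^\ast\|_{\mathrm{poly}}<\rho_{P^\ast}$.

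\textbf{Main obstacle.} The hard step is simplicity: interior angles are undefined until the chain is known to be simple, so I avoid them and use turning angles plus the turning-number argument. I would justify the convexity characterisation briefly, e.g. by noting that strictly monotone edge directions over one revolution make each edge's line a supporting line. The fallback is a direct compactness argument: non-adjacent edges of $P^\ast$ are at positive distance, and adjacent edges meet only at a shared vertex with angle bounded away from $0$ and $\pi$. Both properties persist under small perturbations, which again yields simplicity.
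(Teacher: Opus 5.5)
Your proof is correct, but it establishes simplicity differently from the paper. The paper never uses turning numbers. It sets $\eta^\ast$ to be half the minimal distance between nonadjacent edges of $P^\ast$. The edgewise bound $d_H(e_i(v),e_i^\ast)\le\|v-v^\ast\|_{\mathrm{poly}}$ then keeps nonadjacent edges disjoint, and the angle lower bound rules out overlap of adjacent edges. (A3$'$) follows from continuity of the interior angles. For (A1$'$) the paper places the perturbed vertices in $\Omega$ (radius $\tfrac12\dist(P^\ast,\partial\Omega)$), uses convexity of $\Omega$, and applies the boundary Hausdorff estimate of Lemma~\ref{lem:dist-boundary-perturb}. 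So the paper's proof is essentially your ``fallback''.

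Your turning-angle argument gives convexity and counterclockwise orientation together with simplicity. Both are needed, for $v\in\mathcal V^{n_{\mathrm v}}$ and for the identification $\beta_i(v)=\pi-\theta_i(v)$. The paper treats ``interior angles'' of $P(v)$, computed from dot/cross products of edge vectors, as continuous before simplicity and orientation are known, and never checks that the perturbed list stays counterclockwise. Your version makes those points explicit. Convexity also reduces (A1$'$) to one line, via $P(v)=\conv\{v_i\}$ and $d_H(P(v),P^\ast)\le\delta$.

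The cost is that you rely on the classical characterization (all turns in $(0,\pi)$ with total turning $2\pi$ implies a convex simple polygon), which you only sketch. The paper's edge-separation argument is more elementary and gives an explicit radius through $\eta^\ast$. Its simplicity step also does not use convexity.
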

Lemma~\ref{lem:rho_s} shows for any relaxed admissible polygon $P^\ast$, there is a small ball in the vertex space such that all polygons with vertices in the ball stays
in the relaxed class. We prove Lemma~\ref{lem:rho_s} in Appendix~\ref{sec:proof_rho_s}. The following Lemma~\ref{lem:chart-M} constructs the atlas for $M$, which follows the 8.1.3-8.1.5 in \citep{alberti2022inverse} and is proved in Appendix~\ref{sec:chart-M}.

\begin{lemma}\label{lem:chart-M}
Fix $P^\star\in\mathcal A_{1/2}$ with counterclockwise ordered vertices
$v^\star=(v_i^\star)_{i=0}^{n_{\mathrm v}-1}$, and let $\rho_{P^\star}>0$ be as in
Lemma~\ref{lem:rho_s}. Define
\[
R_{P^\star}\coloneqq \frac12\min\Big\{\frac14\min_{i\neq j}|v_i^\star-v_j^\star|,\ 
\dist(P^\star,\partial\Omega)-\frac12 d_0,\ \rho_{P^\star}\Big\}>0,
\]
\[
\mathcal O_{P^\star}\coloneqq \Big\{v\in\R^{2n_{\mathrm v}}:\ \|v-v^\star\|_{\mathrm{poly}}<R_{P^\star}\Big\},
\qquad
U_{P^\star}\coloneqq \Big\{\gamma_{P(v)}:\ v\in\mathcal O_{P^\star}\Big\}, 
\]
and
\[
\varphi_{P^\star}:U_{P^\star}\to\mathcal O_{P^\star},\qquad
\varphi_{P^\star}(\gamma_{P(v)})\coloneqq v .
\]
Then
\begin{enumerate}
\item[(i)] $U_{P^\star}$ is open in $M$ with respect to the subspace topology induced by $L^1(\Omega)$.
\item[(ii)] $\varphi_{P^\star}$ is well-defined and is a homeomorphism from $U_{P^\star}$ onto
$\mathcal O_{P^\star}$. 
\end{enumerate}
\end{lemma}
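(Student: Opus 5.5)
The plan is to work with the map $\Phi:\mathcal O_{P^\star}\to U_{P^\star}\subset L^1(\Omega)$, $\Phi(v)=\gamma_{P(v)}$. We show that $\Phi$ is a continuous bijection whose inverse is continuous in $L^1$; this gives (ii). We then deduce (i) by a separate argument showing that elements of $M$ which are $L^1$-close to some $\gamma_{P(v)}$ with $v\in\mathcal O_{P^\star}$ must themselves come from vertex lists in $\mathcal O_{P^\star}$.

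\emph{Well-definedness and continuity of $\Phi$.} By Lemma~\ref{lem:rho_s} and $R_{P^\star}\le\rho_{P^\star}/2$, every $v\in\mathcal O_{P^\star}$ gives a simple, counterclockwise polygon with $P(v)\in\mathcal A_{1/2}$. Hence $U_{P^\star}\subset M$. For continuity, note that $\|\gamma_{P(v)}-\gamma_{P(w)}\|_{L^1}=|\kappa-1|\,|P(v)\Delta P(w)|$. Each point of the symmetric difference lies within distance $\|v-w\|_{\mathrm{poly}}$ of $\partial P(v)$, since the edges move by at most that amount. Therefore $|P(v)\Delta P(w)|\lesssim \Per(P(v))\,\|v-w\|_{\mathrm{poly}}$, and the perimeter is bounded because $P(v)\subset\Omega$.

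\emph{Injectivity and well-definedness of $\varphi_{P^\star}$.} This is the point where the ordering ambiguity enters. Suppose $P(v)=P(w)$ with $v,w\in\mathcal O_{P^\star}$. Since the interior angles lie in $(\beta_0/2,\pi-\beta_0/2)$, every $v_i$ is a genuine corner, so the vertex sets coincide as sets. Because $|v_i-v_i^\star|<R_{P^\star}\le\frac18\min_{i\ne j}|v^\star_i-v^\star_j|$, the balls $B_2(v_i^\star,R_{P^\star})$ are pairwise disjoint, and $v_i$ is the unique vertex of $P(v)$ in the $i$-th ball. The same holds for $w_i$, so $v_i=w_i$ for every $i$. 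This rules out cyclic shifts and reversals, and $\varphi_{P^\star}$ is a well-defined bijection onto $\mathcal O_{P^\star}$.

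\emph{Continuity of $\varphi_{P^\star}$.} This is the main obstacle. We must show that $|P(v^n)\Delta P(v)|\to0$ forces $v^n\to v$ for $v^n,v\in\mathcal O_{P^\star}$. We use compactness: the $v^n$ lie in a bounded set, so a subsequence converges to some $\bar v\in\overline{\mathcal O_{P^\star}}$. Since $\overline{\mathcal O_{P^\star}}$ lies inside the $\rho_{P^\star}$-ball, $P(\bar v)$ is a nondegenerate polygon, and $L^1$-continuity gives $|P(\bar v)\Delta P(v)|=0$. Two closed polygons that agree up to a null set are equal, since each is the closure of its interior. The corner-uniqueness argument then gives $\bar v=v$. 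As every subsequence has the same limit, $v^n\to v$. If a quantitative estimate is preferred, a lower bound $|P(v)\Delta P(w)|\gtrsim \|v-w\|^2_{\mathrm{poly}}$ near a corner with angle at least $\beta_0/2$ would also work, but the compactness route is simpler.

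\emph{Openness (i).} Let $\gamma_{P(v)}\in U_{P^\star}$ and suppose $\gamma_{Q}\in M$ with $Q=P(w)\in\mathcal A_{1/2}$ and $\|\gamma_Q-\gamma_{P(v)}\|_{L^1}$ small. We need to show that $w$ can be reordered so that it lies in $\mathcal O_{P^\star}$. We argue by contradiction using compactness again. Relabel each $w^n$ so that its vertex $w^n_0$ is nearest to $v_0$, keeping the counterclockwise order. The $w^n$ are bounded, so along a subsequence $w^n\to\bar w$. The side-length bound $d_1/2$ and the angle bounds pass to the limit, so $P(\bar w)$ is a genuine $n_{\mathrm v}$-gon, and $|P(\bar w)\Delta P(v)|=0$. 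Hence $P(\bar w)=P(v)$, and $\bar w$ is a cyclic relabeling of $v$; with the chosen normalization, $\bar w=v$. Therefore $\|w^n-v^\star\|_{\mathrm{poly}}<R_{P^\star}$ for large $n$, which contradicts the assumption that these $\gamma_Q$ lie outside $U_{P^\star}$. So a neighbourhood of $\gamma_{P(v)}$ in $M$ is contained in $U_{P^\star}$.
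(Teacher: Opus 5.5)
Your proof is correct. On the routine parts it matches the paper: $U_{P^\star}\subset M$ via Lemma~\ref{lem:rho_s}, injectivity via the pairwise disjoint balls $B_2(v_i^\star,R_{P^\star})$, and continuity of $\varphi_{P^\star}^{-1}$ via the $\delta$-neighbourhood bound on $P(v)\Delta P(w)$.

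The real difference is in continuity of $\varphi_{P^\star}$ and openness of $U_{P^\star}$. The paper proves both by quantitative stability. Lemma~\ref{lem:L1_to_H_poly} gives $d_H(\partial P,\partial Q)\le C_H\|\gamma_P-\gamma_Q\|_{L^1(\Omega)}^{1/2}$ through a cone argument based on the uniform Lipschitz boundary. Lemma~\ref{lem:vertex_stability_poly}, imported from Proposition~3.3 of Beretta et al., then turns Hausdorff closeness of boundaries into closeness of suitably ordered vertex lists. Openness is a contradiction with this uniform modulus.

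You replace both estimates with sequential compactness. Vertex lists are bounded, and the non-strict side-length and angle constraints survive the limit, so the limit is still a genuine convex $n_{\mathrm v}$-gon. $L^1$-continuity plus the ``closure of the interior'' observation forces the limit polygon to equal $P(v)$ as a set. Finally, the vertices are characterized intrinsically as the non-straight corners, and this, together with the disjoint balls (for continuity) or the nearest-to-$v_0$ normalization (for openness), fixes the labelling.

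What each route buys:
\begin{itemize}
\item Yours is self-contained and avoids the external stability theorem. It also handles the relabelling ambiguity explicitly, which the paper leaves implicit in the phrase ``for some $v^k$ and $v^0$''.
\item The paper's gives a uniform H\"older modulus $\|v-w\|_{\mathrm{poly}}\lesssim\|\gamma_{P(v)}-\gamma_{P(w)}\|_{L^1(\Omega)}^{1/2}$ with constants depending only on the a priori data. The lemma as stated does not need this.
\end{itemize}

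Two points need an extra line to make your version airtight:
\begin{itemize}
\item Justify the neighbourhood claim for the symmetric difference by an argument valid for any two simple polygons, such as a winding-number argument or the intermediate-value argument of Lemma~\ref{lem:symdiff_poly}. In the openness step you apply it to pairs $(w^n,\bar w)$ that need not lie in a common chart.
\item State why the limit $\bar w$ is simple: its turning angles $\pi-\beta_i$ stay in $[\beta_0/2,\pi-\beta_0/2]$ and still sum to $2\pi$.
\end{itemize}
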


In particular, the choice of $R_{P^\star}$ ensures (1) the neighborhood of $P^\star$ is still inside $\mathcal{A}_{1/2}$ and (2) $\varphi$ is well defined and injective by ruling out the nonuniqueness caused by reordering vertex.

\begin{lemma}[Compactness of $K$]\label{lem:K_compact_poly}
The admissible conductivity set $K$ is compact in $L^1(\Omega)$.
\end{lemma}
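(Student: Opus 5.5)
The plan is to realize $K$ as the continuous image of a compact subset of the vertex space $\R^{2n_{\mathrm v}}$. Set
\[
\mathcal K_v\coloneqq\{v\in\R^{2n_{\mathrm v}}:\ v\in\mathcal V^{n_{\mathrm v}},\ P(v)\in\mathcal A\},
\]
so that $K=\Phi(\mathcal K_v)$ with $\Phi(v)\coloneqq\gamma_{P(v)}$. Two facts then suffice: $\mathcal K_v$ is compact in $\R^{2n_{\mathrm v}}$, and $\Phi$ is continuous from $\mathcal K_v$ into $L^1(\Omega)$.

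\textbf{Boundedness of $\mathcal K_v$.} This follows from (A1), since every vertex lies in the unit disk and hence $\|v\|_{\mathrm{poly}}\le 1$.

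\textbf{Closedness of $\mathcal K_v$.} Take $v^{(k)}\to v$ with $v^{(k)}\in\mathcal K_v$. The non-strict inequalities pass to the limit as follows.
\begin{itemize}
\item Side lengths satisfy $|v_{i+1}-v_i|\ge d_1$, so the limit vertices are pairwise distinct along edges.
\item Distances from the vertices to $\partial\Omega$ are continuous. By convexity, $P(v)$ is the convex hull of its vertices, so $\dist(P,\partial\Omega)\ge d_0$ reduces to a condition on the vertices, namely $|v_i|\le 1-d_0$.
\item Interior angles are continuous functions of the vertices as long as adjacent edges are nondegenerate, so $\beta_0\le\beta_i(v)\le\pi-\beta_0$ persists.
\end{itemize}

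\textbf{Main obstacle.} The delicate point is showing that the limit chain $\partial P(v)$ is still simple and counterclockwise. Uniform convexity with angle bounds handles this. Each $P(v^{(k)})$ is a convex polygon whose turning angles $\pi-\beta_i\in[\beta_0,\pi-\beta_0]$ are positive and sum to $2\pi$. These turning angles converge, so the limit chain is a closed chain with strictly positive turning at each vertex, total turning $2\pi$, and nonzero edges. Such a chain is the boundary of a convex polygon and is therefore simple. Counterclockwise orientation is preserved because the signed area is continuous and bounded below in absolute value. A lower bound of the form $\ge c(d_1,\beta_0)>0$ holds, for instance, because the polygon contains a triangle with two sides $\ge d_1$ and an angle in $[\beta_0,\pi-\beta_0]$. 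Hence the signed area cannot change sign in the limit.

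If one prefers, the Appendix argument for Lemma~\ref{lem:rho_s} can be invoked instead. A limit point of $\mathcal A$-vertex lists lies in $\mathcal A_{1/2}$-type configurations, which are simple near any strictly convex reference.

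\textbf{Continuity of $\Phi$.} Note that $\|\gamma_{P(v)}-\gamma_{P(w)}\|_{L^1}=|\kappa-1|\,|P(v)\Delta P(w)|$. For convex polygons, $P(v)\Delta P(w)$ is contained in the $\delta$-neighborhood of $\partial P(v)$ with $\delta=\|v-w\|_{\mathrm{poly}}$. Indeed, $d_H(P(v),P(w))\le\delta$ because each polygon is the convex hull of its vertices. The measure of this neighborhood is at most $2\delta\,\Per(P(v))+\pi\delta^2$, and $\Per(P(v))\le 2\pi$ for a convex set inside the unit disk. Hence $\Phi$ is Lipschitz on $\mathcal K_v$.

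\textbf{Conclusion.} $K$ is the continuous image of a compact set and is therefore compact in $L^1(\Omega)$.
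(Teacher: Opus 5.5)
Your proposal is correct and is essentially the paper's argument. The paper extracts a convergent subsequence of vertex lists lying in the closed disk of radius $1-d_0$, checks that (A1)--(A3) survive the limit, and obtains $L^1$ convergence from the symmetric-difference bound of Lemma~\ref{lem:symdiff_poly}; this is your ``compact vertex set $\mathcal K_v$ plus Lipschitz $\Phi$'' argument in sequential form. Your total-turning argument gives a full justification of simplicity and convexity of the limit chain, which the paper only asserts by ruling out collinear adjacent edges. The optional appeal to Lemma~\ref{lem:rho_s} is unnecessary, and as stated it would need a radius $\rho_{P^\ast}$ bounded below uniformly over $\mathcal A$; centering it at the limit would be circular.
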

We show the compactness of the admissible set in Appendix~\ref{sec:K_compact_poly}, which allows us to extract an finite cover of $K$ by leveraging the local parameter neighborhoods defined in Lemma~\ref{lem:chart-M}.

\begin{lemma}[Finite patch cover and compact parameter patches]\label{lem:finite-atlas}
For each $\gamma_P\in K$, let $v^P:=\varphi_P(\gamma_P)$ and define the shrunken neighborhood
\[
V_{P}:=\Big\{\gamma\in U_P:\ \|\varphi_P(\gamma)-v^{P}\|_{\mathrm{poly}}<\tfrac12 R_{P}\Big\}\subset U_P .
\]
Then there exist $P_1,\dots,P_m\in\mathcal A$ such that
\[
K\subset \bigcup_{i=1}^m V_{P_i}.
\]
For each $i$, define the parameter patch
\[
E_i:=\varphi_{P_i}\bigl(K\cap \overline{V}_{P_i}\bigr)\subset \R^{2n_{\mathrm v}},
\]
where $\overline{V}_{P_i}$ denotes the closure in $M$.
Then each $E_i$ is compact in $\R^{2n_{\mathrm v}}$. Moreover, if we set
\[
B_i:=\Big\{v\in\R^{2n_{\mathrm v}}:\ \|v-v^{P_i}\|_{\mathrm{poly}}\le \tfrac12 R_{P_i}\Big\},
\]
then $E_i\subset B_i\subset \varphi_{P_i}(U_{P_i})$, hence $\bigcup_{i=1}^m E_i$ is bounded and $\operatorname{conv}(E_i)\subset \varphi_{P_i}(U_{P_i})$ for each  $i=1,\dots,m$.

\end{lemma}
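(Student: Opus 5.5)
The plan is to combine compactness of $K$ (Lemma~\ref{lem:K_compact_poly}) with the chart structure of Lemma~\ref{lem:chart-M}.

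\textbf{Finite subcover.} First I will show that each $V_P$ is open in $M$ for the $L^1$ topology. Indeed, $V_P=\varphi_P^{-1}\bigl(\{w:\|w-v^P\|_{\mathrm{poly}}<\tfrac12R_P\}\bigr)$. By Lemma~\ref{lem:chart-M}(ii), $\varphi_P$ is a homeomorphism, so $V_P$ is open in $U_P$. By Lemma~\ref{lem:chart-M}(i), $U_P$ is open in $M$, hence so is $V_P$. Since $\gamma_P\in V_P$, the family $\{V_P\}_{P\in\mathcal A}$ is an open cover of $K\subset M$. Lemma~\ref{lem:K_compact_poly} says $K$ is compact in $L^1(\Omega)$, and hence compact in the subspace $M$. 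This yields finitely many $P_1,\dots,P_m$ with $K\subset\bigcup_i V_{P_i}$.

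\textbf{Compactness of $E_i$.} Write $\overline V_{P_i}$ for the closure of $V_{P_i}$ in $M$, and let $\mathcal B_i$ denote the closed ball $B_i$. The key claim is $\overline V_{P_i}\subset U_{P_i}$ and $\varphi_{P_i}(\overline V_{P_i})\subset B_i$. I would argue as follows.
\begin{itemize}
\item Set $C_i:=\varphi_{P_i}^{-1}(B_i)$. Since $B_i$ is compact and $B_i\subset\mathcal O_{P_i}$ (because $\tfrac12R_{P_i}<R_{P_i}$), and $\varphi_{P_i}^{-1}$ is continuous, $C_i$ is compact in $L^1$.
\item As $L^1$ is Hausdorff, $C_i$ is closed in $M$.
\item $C_i$ contains $V_{P_i}$, so $\overline V_{P_i}\subset C_i\subset U_{P_i}$, and $\varphi_{P_i}(\overline V_{P_i})\subset B_i$.
\end{itemize}
Next, $K\cap\overline V_{P_i}$ is a closed subset of the compact set $K$, hence compact. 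Its image $E_i$ under the continuous map $\varphi_{P_i}$, restricted to $U_{P_i}$, is therefore compact in $\R^{2n_{\mathrm v}}$. The same argument gives $E_i\subset B_i\subset\mathcal O_{P_i}=\varphi_{P_i}(U_{P_i})$, using the surjectivity in Lemma~\ref{lem:chart-M}(ii).

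\textbf{Boundedness and convexity.} Boundedness of $\bigcup_{i=1}^m E_i$ is immediate, since this is a finite union of subsets of the bounded balls $B_i$. Each $B_i$ is a closed ball for the norm $\|\cdot\|_{\mathrm{poly}}$ (a max of Euclidean norms), so it is convex. Hence $\conv(E_i)\subset B_i\subset\varphi_{P_i}(U_{P_i})$.

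The main subtlety is the closure step. One must make sure the $M$-closure of $V_{P_i}$ does not escape the chart domain $U_{P_i}$; the compactness of the preimage of the closed half-radius ball, together with the Hausdorff property, handles this. A second point needing care is that openness of $U_P$ holds in $M$ rather than in $L^1(\Omega)$. This is harmless, because the cover is only needed relative to $M\supset K$.
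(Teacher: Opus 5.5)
Your proposal is correct and takes essentially the same route as the paper's proof. Both arguments extract a finite subcover of the compact set $K$ by the sets $V_P$, show $\overline V_{P_i}\subset \varphi_{P_i}^{-1}(B_i)\subset U_{P_i}$ because $\varphi_{P_i}^{-1}(B_i)$ is compact and hence closed in $M$, then conclude from continuity of $\varphi_{P_i}$ on the compact set $K\cap\overline V_{P_i}$ and from convexity of the $\|\cdot\|_{\mathrm{poly}}$-ball $B_i$. Your explicit note that openness of $V_P$ in $M$ needs both the homeomorphism property and the openness of $U_P$ in $M$ (Lemma~\ref{lem:chart-M}(i)) fills in a step the paper leaves implicit.
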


The finite many parameter patches $E_i$ will help us to quantify the complexity of DtN measurements through the local regularity of mapping from vertex to DtN measurements.

\subsection{Chartwise regularity of the DtN map}\label{sec:chartwise_DtN}
To transfer low-dimensional structure from vertices to measurements, we need a quantitative regularity statement for the forward map. The regularity of mapping $v \to \Lambda_{\gamma(v)}$ has been investigated in \citet{alberti2022inverse, beretta2017differentiability, beretta2022global}. Within each chart, the DtN operator depends $C^1-$smoothly on the vertex parameters. 
Consider
\[
Y \coloneqq  \mathcal{L}\!\big(H^{1/2}(\partial\Omega),\,H^{-1/2}(\partial\Omega)\big),
\qquad
\|A\|_Y \coloneqq  \sup_{\|f\|_{H^{1/2}(\partial\Omega)}=1}\|Af\|_{H^{-1/2}(\partial\Omega)}\,,
\]
Lemma~\ref{lem:DtN-C1-chart} states the regularity statement of the forward map chartwisely.

\begin{lemma}[$C^1$-regularity of the DtN map in local coordinates]\label{lem:DtN-C1-chart}
Fix $P^\star\in\mathcal A_{1/2}$ and let $(U_{P^\star},\varphi_{P^\star})$ be the chart from Lemma~\ref{lem:chart-M}. Set $\mathcal O_{P^\star}\coloneqq \varphi_{P^\star}(U_{P^\star})\subset\mathbb R^{2n_{\mathrm v}}$ and for $v \in \mathcal{O}_{P^\star}$ define
\[
\widetilde\Lambda_{P^\star}:\mathcal O_{P^\star}\to Y,\qquad
\widetilde\Lambda_{P^\star}(v)\coloneqq \Lambda_{\gamma_{P(v)}}\,.
\]
Then $\widetilde\Lambda_{P^\star}\in C^1(\mathcal O_{P^\star};Y)$.
\end{lemma}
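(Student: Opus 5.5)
The strategy is to realize the map $v\mapsto\Lambda_{\gamma_{P(v)}}$ locally as a composition of a smooth family of diffeomorphisms of $\Omega$ with a conductivity-to-DtN map. The argument follows the shape-derivative approach of \citet{beretta2017differentiability,beretta2022global} and \citet{alberti2022inverse}.

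\textbf{Step 1: transport.} Fix $v_0\in\mathcal O_{P^\star}$. For $v$ near $v_0$, build a piecewise-affine (then mollified) or smooth vector field $\Phi_v:\Omega\to\Omega$ such that:
\begin{itemize}
\item $\Phi_v=\mathrm{id}$ near $\partial\Omega$;
\item $\Phi_v(P(v_0))=P(v)$;
\item $v\mapsto\Phi_v$ is $C^\infty$ (in fact affine) into $W^{1,\infty}(\Omega;\R^2)$, with $\Phi_{v_0}=\mathrm{id}$.
\end{itemize}
A concrete choice is a fixed triangulation of a neighborhood of $P(v_0)$, using the vertices $v_0$ plus fixed auxiliary points inside the collar $\{\dist(\cdot,\partial\Omega)>d_0/2\}$. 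We then move only the polygon vertices and take the piecewise-affine map. This is possible because $R_{P^\star}$ keeps vertices far from each other and from $\partial\Omega$, so triangles stay nondegenerate for $|v-v_0|$ small.

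Changing variables $x=\Phi_v(y)$ in the weak form \eqref{eqn:weak_form} gives
\[
\langle\Lambda_{\gamma_{P(v)}}f,\psi\rangle=\int_\Omega A_v(y)\nabla \tilde u\cdot\nabla\tilde\psi\,dy,
\]
where $A_v=\gamma_{P(v_0)}\,|\det D\Phi_v|\,D\Phi_v^{-1}D\Phi_v^{-T}$. The key point is that the discontinuity set of $\gamma_{P(v_0)}$ is now fixed, and the dependence on $v$ sits entirely in a matrix coefficient that is smooth in $v$ with values in $L^\infty(\Omega;\R^{2\times2})$. The boundary data is unchanged because $\Phi_v=\mathrm{id}$ near $\partial\Omega$, so the trace and DtN pairing are preserved.

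\textbf{Step 2: coefficient-to-DtN map.} The map $A\mapsto\Lambda_A$ from uniformly elliptic $L^\infty$ matrices to $Y$ is analytic. The solution operator is obtained by inverting a bounded coercive operator depending affinely on $A$, so a Neumann-series argument gives analyticity. Its derivative is
\[
\langle D\Lambda_A[H]f,\psi\rangle=\int H\nabla u_A^f\cdot\nabla u_A^\psi ,
\]
with bounds uniform via Lax–Milgram and the trace/extension constants. Composing with the smooth map $v\mapsto A_v$ shows $\widetilde\Lambda_{P^\star}$ is $C^1$ (indeed $C^\infty$) near $v_0$. Since $v_0\in\mathcal O_{P^\star}$ was arbitrary, we get $C^1$ on all of $\mathcal O_{P^\star}$. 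Continuity of the derivative across different base points $v_0$ is automatic, since differentiability is a local property and the derivative is intrinsically defined.

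\textbf{Main obstacle.} The delicate step is the construction of $\Phi_v$. We need it to be bi-Lipschitz with $\|D\Phi_v-I\|_{L^\infty}$ small and to depend smoothly on $v$, uniformly over a neighborhood. This requires using that $\mathcal O_{P^\star}\subset\mathcal A_{1/2}$ (Lemma~\ref{lem:rho_s}): the polygons are simple with angles in $(\beta_0/2,\pi-\beta_0/2)$ and lie at distance greater than $d_0/2$ from $\partial\Omega$.

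If a global triangulation is awkward, the fallback is to take, for each vertex, a smooth cutoff $\eta_i$ supported in a ball of radius $\tfrac14\min_{i\ne j}|v_i^\star-v_j^\star|$ around $v_{0,i}$. Then set
\[
\Phi_v(y)=y+\sum_i\eta_i(y)(v_i-v_{0,i}),
\]
which is a $C^\infty$ diffeomorphism for small displacement. However, this only maps vertices correctly, not edges. Edges must be handled by making the field affine along each edge, e.g.\ with cutoffs adapted to a triangulation. So I expect to use the piecewise-affine construction, which sends edges to edges exactly.
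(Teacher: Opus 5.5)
Your proposal is correct, but it argues differently from the paper. The paper's proof is a citation: it appeals to the vertex-motion shape-derivative results of \citet[Lem.~4.4, Cor.~4.5]{beretta2017differentiability} and \citet[Sec.~8.1]{alberti2022inverse} for the existence of the derivative and its continuity in the norm of $Y$.

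You instead give a self-contained perturbation-of-identity argument. Set $\Phi_v=\mathrm{id}+\sum_i\lambda_i\,(v_i-v_{0,i})$, where the $\lambda_i$ are hat functions of a reference triangulation having the edges of $P(v_0)$ as mesh edges. This freezes the interface, so $\Lambda_{\gamma_{P(v)}}=\Lambda_{A_v}$ with $v\mapsto A_v$ analytic into $L^\infty$. Everything then reduces to the analytic dependence of the anisotropic DtN map on its coefficient, which follows from Lax--Milgram and a Neumann series.

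The step you flag as delicate is in fact easy. Once $\|D\Phi_v-I\|_{L^\infty}<1$, the map $\Phi_v-\mathrm{id}$ is Lipschitz with constant $<1$ on $\R^2$ and supported in a compact subset of $\Omega$. Hence $\Phi_v$ is a bi-Lipschitz homeomorphism of $\Omega$ fixing a neighborhood of $\partial\Omega$. It maps each edge of $P(v_0)$ affinely onto the corresponding edge of $P(v)$, and therefore $P(v_0)$ onto $P(v)$. Since $C^1$ is a local property, the dependence of $\Phi_v$ on the base point $v_0$ is harmless.

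Your route has several advantages. It is elementary and gives more, namely real analyticity of $v\mapsto\Lambda_{\gamma_{P(v)}}$. It also avoids the corner-singularity analysis needed to express the derivative as a boundary integral over the edges. Moreover, the derivative appears as a volume integral $\int\partial_v A_v\,\nabla\tilde u\cdot\nabla\tilde w$. This bounds $\|D\widetilde\Lambda_{P^\star}\|$ by ellipticity and trace constants times $\max_i\|\nabla\lambda_i\|_{L^\infty}$, which offers a handle on making $M_*$ quantitative.

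The cited route, in exchange, provides an explicit Hadamard-type boundary formula for the derivative, which is useful for shape optimization. Its cost is reliance on external results whose hypotheses must be matched to the chart setting.
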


\begin{proof}[Proof of Lemma ~\ref{lem:DtN-C1-chart}]
By Lemma~\ref{lem:chart-M}, for all $v\in\mathcal O_{P^\star}$ the polygon $P(v)$ stays in $\mathcal A_{1/2}$ and the
counterclockwise vertex labeling is fixed in the chart.
For polygonal inclusions satisfying the a priori geometric constraints, the map $v\mapsto\Lambda_{\gamma_{P(v)}}$ admits
a shape derivative with respect to vertex perturbations, and this derivative is continuous in the operator norm of
$Y$. This is established in the differentiability results for polygonal conductivities (see, e.g.,
\citealp[Lem.~4.4 and Cor.~4.5]{beretta2017differentiability} and the discussion in \citealp[Sec.~8.1]{alberti2022inverse}).
Therefore $\widetilde\Lambda_{P^\star}$ is continuously Fr\'{e}chet differentiable on $\mathcal O_{P^\star}$.
\end{proof}

Since $\widetilde\Lambda_{P_i}\in C^1(\mathcal O_{P_i};Y)$, the derivative
$D\widetilde\Lambda_{P_i}:\mathcal O_{
P_i}\to \mathcal L(\R^{2n_{\mathrm{v}}},Y)$ is continuous.
Moreover, $\operatorname{conv}(E_i)\subset \mathcal O_{P_i}$ and $\operatorname{conv}(E_i)$ is compact.
Hence we can denote
\begin{equation}
    M_i \coloneqq  \sup_{v\in \operatorname{conv}(E_i)}\big\|D\widetilde\Lambda_{P_i}(v)\big\|_{\mathrm{op}} \ <\ \infty,
\qquad
M_*\coloneqq \max_{1\le i\le m}M_i\,.
\end{equation}

We will later use $M_i$ and $M_*$ to show the chartwise Lipschitz for discretized DtN measurements. For simplicity, we denote $\widetilde\Lambda_{i}\coloneqq \widetilde\Lambda_{P_i}$.

\subsection{Chartwise Lipschitz for the DtN measurements}\label{sec:chartwise_lipschitz_discretized_DtN}

We next specify the finite-dimensional representation of DtN measurements. Starting from the DtN operator $\Lambda_\gamma\in Y$, we will (i) restrict it to finite-dimensional subspaces via the $R_N$ and obtain DtN matrix as in \Eqref{eqn:R_N}, 
and (ii) vectorize and normalize the resulting DtN matrix to obtain a vector in Euclidean space, which is aligned to the training object in diffusion model. 
Combining all these yields our discretized measurement map $F_N:K\to\R^{d_N}$ in \Eqref{eqn:F_N}. 

Let $V_N^1\subset H^{1/2}(\partial\Omega)$ and $V_N^2\subset H^{-1/2}(\partial\Omega)$ be finite-dimensional Hilbert subspaces with inherited inner products, and denote
\[
n_1\coloneqq \dim V_N^1,\qquad n_2\coloneqq \dim V_N^2,\qquad N\coloneqq \min\{n_1,n_2\},\qquad d_N\coloneqq n_1n_2,\qquad Y_N\coloneqq \mathcal{L}(V_N^1,V_N^2)\,.
\]

For notational simplicity, in the sequel we focus on the square case $n_1=n_2=N$, which is also consistent with our numerical setup and in this case $d_N=N^2$.
Let $P_N^1:H^{1/2}(\partial\Omega)\to V_N^1$ and $P_N^2:H^{-1/2}(\partial\Omega)\to V_N^2$
be the orthogonal projections and define 
\begin{equation}\label{eqn:R_N}
R_N:Y\to Y_N,\qquad R_N(A)\coloneqq P_N^2AP_N^1\,.
\end{equation}
Equip $Y_N$ with the Hilbert--Schmidt norm $\|\cdot\|_{\mathrm{HS}}$, which is independent of the choice of basis of $V_N^1$ and $V_N^2$.
Fix orthonormal bases $\{\varphi_j\}_{j=1}^{n_1}$ of $V_N^1$ and $\{\psi_i\}_{i=1}^{n_2}$ of $V_N^2$ and define the vectorization
\[
\mathrm{vec}:Y_N\to\R^{d_N},\qquad
\mathrm{vec}(A)\coloneqq \Big(\langle A\varphi_j,\psi_i\rangle_{V_N^2}\Big)_{\substack{1\le i\le n_2\\1\le j\le n_1}}\,,
\]
thus $\|\mathrm{vec}(A)\|_2=\|A\|_{\mathrm{HS}}$ for all $A\in Y_N$. By introducing the finite dimensional restriction, we have following standard inequality, which records that (i) restriction and projection cannot increase operator norm, and (ii) the Hilbert–Schmidt norm of the restricted operator is controlled by the original operator norm up to a factor $\sqrt{N}$. 
\begin{lemma}\label{lem:RN-stable}
For any $A\in Y$,
\[
\|R_N(A)\|_{\mathrm{op}(V_N^1\to V_N^2)}\ \le\ \|A\|_Y,
\qquad
\|R_N(A)\|_{\mathrm{HS}}\ \le\ \sqrt{N}\,\|A\|_Y\,.
\]
\end{lemma}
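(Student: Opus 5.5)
The plan is to prove the operator-norm bound first and then obtain the Hilbert--Schmidt bound from it by a rank argument.

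\textbf{Operator-norm bound.} Fix $f\in V_N^1$ with $\|f\|_{H^{1/2}}=1$. Since $f$ already lies in $V_N^1$, we have $P_N^1 f=f$. Hence
\[
R_N(A)f=P_N^2Af .
\]
Orthogonal projections in a Hilbert space have norm at most one, so
\[
\|P_N^2 Af\|_{H^{-1/2}}\le \|Af\|_{H^{-1/2}}\le \|A\|_Y .
\]
Taking the supremum over such $f$ gives $\|R_N(A)\|_{\mathrm{op}(V_N^1\to V_N^2)}\le\|A\|_Y$. The norm on $V_N^2$ is inherited from $H^{-1/2}(\partial\Omega)$, so no constant enters here. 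Equivalently, one can write $\|R_N(A)\|_{\mathrm{op}}\le\|P_N^2\|\,\|A\|_Y\,\|P_N^1\|$.

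\textbf{Hilbert--Schmidt bound.} I will use the orthonormal basis $\{\varphi_j\}_{j=1}^{N}$ of $V_N^1$ and write
\[
\|R_N(A)\|_{\mathrm{HS}}^2=\sum_{j=1}^{N}\|R_N(A)\varphi_j\|_{V_N^2}^2 .
\]
Each summand is at most $\|R_N(A)\|_{\mathrm{op}}^2\,\|\varphi_j\|^2=\|R_N(A)\|_{\mathrm{op}}^2$. Summing the $N$ terms gives $\|R_N(A)\|_{\mathrm{HS}}^2\le N\|R_N(A)\|_{\mathrm{op}}^2$. Combining this with the first bound yields $\|R_N(A)\|_{\mathrm{HS}}\le\sqrt N\|A\|_Y$.

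In the non-square case the same argument works with $N$ replaced by $\min\{n_1,n_2\}$. One sums over the smaller of the two bases, using $\|B\|_{\mathrm{HS}}=\|B^*\|_{\mathrm{HS}}$, or equivalently notes that the rank of $R_N(A)$ is at most $N$ and applies $\|B\|_{\mathrm{HS}}\le\sqrt{\operatorname{rank}B}\,\|B\|_{\mathrm{op}}$.

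No step is a genuine obstacle. The one point that needs care is that $P_N^1$ and $P_N^2$ are orthogonal projections in the $H^{\pm1/2}$ inner products themselves. This is what gives them norm at most one and makes the Hilbert--Schmidt norm basis-independent with $\|\mathrm{vec}(A)\|_2=\|A\|_{\mathrm{HS}}$. Both facts follow directly from the definitions of $V_N^1$ and $V_N^2$ as Hilbert subspaces with inherited inner products.
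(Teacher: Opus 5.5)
Your proof is correct and follows essentially the same route as the paper: the operator-norm bound comes from the orthogonal projections having norm one, and the Hilbert--Schmidt bound comes from $\|B\|_{\mathrm{HS}}\le\sqrt{N}\,\|B\|_{\mathrm{op}}$ combined with the first bound. The only cosmetic difference is that you derive this inequality by summing $\|R_N(A)\varphi_j\|^2$ over an orthonormal basis of the $N$-dimensional domain, whereas the paper cites $\|B\|_{\mathrm{HS}}\le\sqrt{\operatorname{rank}B}\,\|B\|_{\mathrm{op}}$ with $\operatorname{rank}R_N(A)\le N$, which you also mention as an alternative.
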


\begin{proof}[Proof of Lemma ~\ref{lem:RN-stable}]
Since $P_N^1,P_N^2$ are orthogonal projections, $\|P_N^1\|=\|P_N^2\|=1$, hence
\[
\|R_N(A)\|_{\mathrm{op}}=\|P_N^2AP_N^1\|_{\mathrm{op}}\le \|A\|_Y.
\]
For the Hilbert--Schmidt norm, for any linear map $B:V_N^1\to V_N^2$,
$\|B\|_{\mathrm{HS}}\le \sqrt{\mathrm{rank}(B)}\,\|B\|_{\mathrm{op}}$ and
$\mathrm{rank}(R_N(A))\le N$, so
\[
\|R_N(A)\|_{\mathrm{HS}}
\le \sqrt{N}\,\|R_N(A)\|_{\mathrm{op}}
\le \sqrt{N}\,\|A\|_Y.
\]
\end{proof}
Finally we define the discretized DtN measurement scaled by $1/\sqrt{N}$,
\begin{equation}\label{eqn:F_N}
F_N:K\to\R^{d_N},\qquad F_N(\gamma)\coloneqq \frac{1}{\sqrt{N}}\mathrm{vec}\big(R_N(\Lambda_\gamma)\big)\,,
\end{equation}
and establish the Chartwise Lipschitz for $F_N$ as follows. The $1 / \sqrt{N}$ normalization rescales distances in $\R^{d_N}$, and Lipschitz constants rescale accordingly. In particular, this normalization does not change entropy exponents (rates) and only rescales accuracy parameter $\varepsilon$. We adopt the $1/\sqrt{N}$ normalization to absorb the discretization-induced factor $\sqrt{N}$, which can streamline the subsequent Lipschitz and entropy bounds. We also discuss the effect of such normalization in Appendix~\ref{sec:normalization}.

We now combine the chartwise regularity of DtN mapping and Lemma~\ref{lem:RN-stable} to conclude that, on each parameter patch $E_i$, the chartwise discretized forward map $G_{N,i}$ is Lipschitz in the Euclidean norm.
\begin{lemma}[Chartwise Lipschitz for measurements]\label{lem:GN-Lip}
For each $i$, define $G_{N,i}:E_i\to\R^{d_N}$ by $G_{N,i}(v)\coloneqq F_N(\varphi_i^{-1}(v))$.
Then $G_{N,i}$ is Lipschitz on $E_i$ with
\[
\|G_{N,i}(v)-G_{N,i}(v')\|_2 \le \,M_i\,\|v-v'\|_2,\qquad \forall v,v'\in E_i\,,
\]
where $M_i\coloneqq \sup_{v\in\conv(E_i)}\|D\widetilde\Lambda_i(v)\|_{\mathrm{op}}<\infty$ and independent of $N$.

\end{lemma}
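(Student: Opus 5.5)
The plan is to write $G_{N,i}$ as a composition and push the chartwise $C^1$ bound on $\widetilde\Lambda_i$ through the linear maps $R_N$ and $\mathrm{vec}$. For $v\in E_i$ we have $\varphi_i^{-1}(v)=\gamma_{P(v)}$, so
\[
G_{N,i}(v)=\tfrac{1}{\sqrt N}\,\mathrm{vec}\big(R_N(\widetilde\Lambda_i(v))\big).
\]
Since $\mathrm{vec}$ is an isometry from $(Y_N,\|\cdot\|_{\mathrm{HS}})$ to $(\R^{d_N},\|\cdot\|_2)$, and $R_N$ is linear, we get for $v,v'\in E_i$
\[
\|G_{N,i}(v)-G_{N,i}(v')\|_2=\tfrac{1}{\sqrt N}\,\big\|R_N\big(\widetilde\Lambda_i(v)-\widetilde\Lambda_i(v')\big)\big\|_{\mathrm{HS}}\le \big\|\widetilde\Lambda_i(v)-\widetilde\Lambda_i(v')\big\|_Y .
\]
The last inequality is the second estimate of Lemma~\ref{lem:RN-stable}, and the factor $\sqrt N$ cancels exactly against the normalization in \eqref{eqn:F_N}. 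This reduces the claim to a Lipschitz bound for $\widetilde\Lambda_i$ in the $Y$-norm on $E_i$.

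For that bound I would use the mean value inequality in Banach spaces. By Lemma~\ref{lem:finite-atlas}, $\conv(E_i)\subset\mathcal O_{P_i}$, so for $v,v'\in E_i$ the whole segment $v+t(v'-v)$, $t\in[0,1]$, lies in $\conv(E_i)\subset\mathcal O_{P_i}$. By Lemma~\ref{lem:DtN-C1-chart}, $\widetilde\Lambda_i$ is $C^1$ there, so the map $t\mapsto\widetilde\Lambda_i(v+t(v'-v))$ is $C^1$ from $[0,1]$ into $Y$. The fundamental theorem of calculus (Bochner/Riemann integral in $Y$) then gives
\[
\widetilde\Lambda_i(v')-\widetilde\Lambda_i(v)=\int_0^1 D\widetilde\Lambda_i(v+t(v'-v))[v'-v]\,dt .
\]
Hence $\|\widetilde\Lambda_i(v')-\widetilde\Lambda_i(v)\|_Y\le \sup_{w\in\conv(E_i)}\|D\widetilde\Lambda_i(w)\|_{\mathrm{op}}\,\|v'-v\|_2=M_i\|v-v'\|_2$. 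Here $\|\cdot\|_{\mathrm{op}}$ is taken with respect to the Euclidean norm on $\R^{2n_{\mathrm v}}$, consistent with the definition of $M_i$, and $M_i<\infty$ by continuity of $D\widetilde\Lambda_i$ on the compact set $\conv(E_i)$. Independence of $N$ is immediate, since $M_i$ involves only $\widetilde\Lambda_i$ and $E_i$.

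The only delicate points are bookkeeping rather than analysis. The first is that $\varphi_i^{-1}$ on $E_i$ really returns $\gamma_{P(v)}$, so that $G_{N,i}=\tfrac1{\sqrt N}\mathrm{vec}\circ R_N\circ\widetilde\Lambda_i$. This holds because $E_i\subset\varphi_{P_i}(U_{P_i})$ and $\varphi_{P_i}$ is a homeomorphism onto $\mathcal O_{P_i}$ by Lemma~\ref{lem:chart-M}. The second is the convexity requirement: segments between points of $E_i$ must stay inside the chart domain. This is exactly why Lemma~\ref{lem:finite-atlas} ensures $\conv(E_i)\subset B_i\subset\mathcal O_{P_i}$, and why $M_i$ is defined as a supremum over $\conv(E_i)$ rather than over $E_i$.
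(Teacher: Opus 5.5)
Your proposal is correct and matches the paper's proof step for step: you use $\|\mathrm{vec}(\cdot)\|_2=\|\cdot\|_{\mathrm{HS}}$ and the $\sqrt{N}$ Hilbert--Schmidt bound of Lemma~\ref{lem:RN-stable} to cancel the $1/\sqrt{N}$ normalization, which reduces the claim to a $Y$-norm Lipschitz bound for $\widetilde\Lambda_i$; you then integrate $D\widetilde\Lambda_i$ along the segment $[v,v']\subset\conv(E_i)\subset\mathcal O_{P_i}$. Your extra bookkeeping, namely that $\varphi_i^{-1}(v)=\gamma_{P(v)}$ on $E_i$ and that $M_i<\infty$ because $\conv(E_i)$ is compact, makes explicit two points the paper leaves implicit, but the argument is the same.
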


\begin{proof}[Proof of Lemma ~\ref{lem:GN-Lip}]
Using $\|\mathrm{vec}(\cdot)\|_2=\|\cdot\|_{\mathrm{HS}}$ and Lemma~\ref{lem:RN-stable},
\[
\|G_{N,i}(v)-G_{N,i}(v')\|_2
=\frac{1}{\sqrt{N}}\big\|R_N\big(\widetilde\Lambda_i(v)-\widetilde\Lambda_i(v')\big)\big\|_{\mathrm{HS}}
\le\ \|\widetilde\Lambda_i(v)-\widetilde\Lambda_i(v')\|_Y\,.
\]
Then it suffices to show
\[
\|\widetilde\Lambda_i(v)-\widetilde\Lambda_i(v')\|_Y\le M_i\|v-v'\|_2\,.
\]
To this end, we apply Fundamental theorem of calculus along the segment connecting $v$ and $v'$, 
\[
\widetilde\Lambda_i(v')-\widetilde\Lambda_i(v)=\int_0^1 D\widetilde\Lambda_i\big(v+t(v'-v)\big)\,(v'-v)\,dt\,.
\]
\[
\|\widetilde\Lambda_i(v)-\widetilde\Lambda_i(v')\|_Y\le \sup_{t\in[0,1]}\|D\widetilde\Lambda_i\big(v+t(v'-v)\big) \|_{\mathrm{op}}\|v-v'\|_2 \leq \sup_{v\in \operatorname{conv}(E_i)}\big\|D\widetilde\Lambda_i(v)\big\|_{\mathrm{op}} \|v-v'\|_2 \,.
\]
Here we apply triangle inequality for integrals and the definition of operator norm in the first inequality, and in the last inequality we use the facts that $\operatorname{conv}(E_i)\subset \mathcal O_i$ and $D\widetilde\Lambda_i$ is continuous on $\mathcal O_i$ by Lemma~\ref{lem:DtN-C1-chart}.

\end{proof}
\subsection{Metric entropy and bounded support of $F_N(K)$}\label{sec:entropy-bdd-FN}
The diffusion convergence theorem we invoke later depends on the intrinsic complexity of the data
distribution, quantified via covering numbers of its support at a prescribed resolution
$\varepsilon_0 = T^{-c_{\varepsilon_0}}$, where $T\in\mathbb{N}$ is the number of diffusion steps and
$c_{\varepsilon_0}>0$ is fixed.
We therefore aim to bound the metric entropy of the measurement set $F_N(K)$ with respect to the
Euclidean norm $\|\cdot\|_2$.

\begin{definition}[Covering number and metric entropy]\label{def:covering}
Let $(\mathcal{X},d)$ be a metric space and $S\subset\mathcal{X}$.
For $\varepsilon>0$, the covering number $\mathcal{N}_\varepsilon(S;d)$ is the smallest $n$ such that
$S\subset\bigcup_{j=1}^n B(x_j,\varepsilon)$ for some $x_1,\dots,x_n\in\mathcal{X}$, where
$B(x,\varepsilon)\coloneqq \{y:\ d(y,x)\le\varepsilon\}$.
The metric entropy is defined as $\log \mathcal{N}_\varepsilon(S;d)$.
\end{definition}

The following lemma shows that an \(L\)-Lipschitz map does not increase covering numbers after rescaling
the covering radius.

\begin{lemma}[Coverings under Lipschitz maps]\label{lem:cover-lip}
Let $(\mathcal X,d_{\mathcal X})$ and $(\mathcal Z,d_{\mathcal{Z}})$ be metric spaces.
If $f:\mathcal X\to\mathcal Z$ is $L$-Lipschitz, then for any $S\subset\mathcal X$ and $\varepsilon>0$,
\[
\mathcal{N}_\varepsilon\!\bigl(f(S);d_{\mathcal{Z}}\bigr)\ \le\ \mathcal{N}_{\varepsilon/L}\!\bigl(S;d_{\mathcal X}\bigr)\,.
\]
\end{lemma}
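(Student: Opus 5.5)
The plan is to push an optimal cover of $S$ forward through $f$. If $\mathcal N_{\varepsilon/L}(S;d_{\mathcal X})=\infty$ there is nothing to prove, so assume it is finite and equal to $n$. By Definition~\ref{def:covering}, choose centers $x_1,\dots,x_n\in\mathcal X$ with
\[
S\subset\bigcup_{j=1}^n B(x_j,\varepsilon/L).
\]
The candidate cover of $f(S)$ is the family of closed balls $B(f(x_j),\varepsilon)$ in $\mathcal Z$, $j=1,\dots,n$. Their centers lie in $\mathcal Z$, which is allowed because the definition only requires centers in the ambient space.

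To verify the covering property, take any $z\in f(S)$ and write $z=f(s)$ with $s\in S$. Pick $j$ with $d_{\mathcal X}(s,x_j)\le\varepsilon/L$. The Lipschitz property then gives
\[
d_{\mathcal Z}\bigl(f(s),f(x_j)\bigr)\le L\,d_{\mathcal X}(s,x_j)\le\varepsilon,
\]
so $z\in B(f(x_j),\varepsilon)$. Hence $f(S)$ is covered by $n$ closed $\varepsilon$-balls, and minimality in the definition of $\mathcal N_\varepsilon(f(S);d_{\mathcal Z})$ yields the claimed inequality.

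The only points needing care are technical. The definition uses closed balls, which is what makes the non-strict inequality $\le\varepsilon$ sufficient. The centers $x_j$ belong to $\mathcal X$ rather than $S$, so $f(x_j)$ is defined because $f$ is defined on all of $\mathcal X$. If instead $f$ were given only on $S$, as in the later application $G_{N,i}:E_i\to\R^{d_N}$, one would take $\mathcal X=E_i$ itself, so the centers automatically lie in the domain. The degenerate case $L=0$ should be handled separately: then $f$ is constant, $f(S)$ is a single point (or empty), and the bound holds trivially under the convention $\varepsilon/0=\infty$. I do not expect any step here to be a genuine obstacle.
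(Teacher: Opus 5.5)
Your proposal is correct and follows the paper's proof essentially verbatim: you take an optimal $\varepsilon/L$-cover $\{x_j\}$ of $S$, then use $d_{\mathcal Z}(f(s),f(x_j))\le L\,d_{\mathcal X}(s,x_j)\le\varepsilon$ to show that $\{f(x_j)\}$ is an $\varepsilon$-cover of $f(S)$. Your extra handling of the infinite-cover case, the degenerate case $L=0$, and the requirement that the centers lie in the domain of $f$ refines the same argument slightly more carefully than the paper does.
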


\begin{proof}[Proof of Lemma ~\ref{lem:cover-lip}]
Let $\{x_j\}_{j=1}^n$ be an $\varepsilon/L$-net of $S$ in $d_{\mathcal X}$ with
$n=\mathcal{N}_{\varepsilon/L}\!\bigl(S;d_{\mathcal X}\bigr)$, so
$S\subset\bigcup_{j=1}^n B_{\mathcal X}(x_j,\varepsilon/L)$.
For any $z=f(s)\in f(S)$ choose $j$ with $d_{\mathcal X}(s,x_j)\le\varepsilon/L$.
Then $d_{\mathcal{Z}}(z,f(x_j))\le L d_{\mathcal X}(s,x_j)\le\varepsilon$.
This implies that $\{f(x_j)\}$ is an $\varepsilon$-net of $f(S)$, and as a consequence,
the covering number of $f(S)$ is no greater than $n$.
\end{proof}

\begin{remark}
If a map is rescaled by a scalar factor $a>0$ (e.g.\ $a=1/\sqrt{N}$ in the definition of $F_N$),
then $a f$ is $(aL)$-Lipschitz whenever $f$ is $L$-Lipschitz. Thus Lemma~\ref{lem:cover-lip}
shows that such normalizations only rescale the covering radius (hence modify constants), and do not
affect the exponent in the metric-entropy bounds.
\end{remark}

We also need a volumetric covering bound in $\R^{2n_{\mathrm v}}$, since an $n_{\mathrm v}$-gon in $\R^2$
is parameterized by its $n_{\mathrm v}$ ordered vertices in $\R^2$, i.e.\ $v\in\R^{2n_{\mathrm v}}$.

\begin{lemma}[Volumetric covering in $\R^{2n_{\mathrm v}}$]\label{lem:volumetric}
There exists an absolute constant $C_{\mathrm{vol}}>0$ such that the following holds.
If $E\subset\R^{2n_{\mathrm v}}$ satisfies $E\subset B_2(0,R)$ for some $R>0$, then for every $\delta>0$,
\[
\mathcal{N}_\delta(E;\|\cdot\|_2)\ \le\ \max\Big\{\,1,\ \Big(\frac{C_{\mathrm{vol}}\,R}{\delta}\Big)^{2n_{\mathrm v}}\Big\}\,.
\]
\end{lemma}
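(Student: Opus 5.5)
The plan is the standard volumetric argument: build a maximal $\delta$-separated subset of $E$, which is automatically a $\delta$-net, and bound its cardinality by comparing volumes in $\R^{2n_{\mathrm v}}$. Throughout, write $D\coloneqq 2n_{\mathrm v}$ and let $\omega_D$ denote the Lebesgue volume of the unit Euclidean ball in $\R^D$.

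First I treat the trivial regime $\delta\ge R$. Here $E\subset B_2(0,R)\subset B_2(0,\delta)$, so the single center $0$ covers $E$ and $\mathcal N_\delta(E;\|\cdot\|_2)\le 1$. This case is what the $\max\{1,\cdot\}$ in the statement accounts for. It also covers $E=\emptyset$, where the covering number is $0$.

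Now assume $0<\delta<R$ and $E\neq\emptyset$.

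1. Choose a maximal $\delta$-separated family $\{x_1,\dots,x_n\}\subset E$, meaning $|x_j-x_k|>\delta$ for $j\ne k$. Such a family is finite by the volume bound in step 3, so maximality makes sense: one can add points greedily, and the count cannot exceed that bound.
2. By maximality, every $y\in E$ satisfies $|y-x_j|\le\delta$ for some $j$. Hence the closed balls $B_2(x_j,\delta)$ cover $E$, and $\mathcal N_\delta(E)\le n$.
3. The balls $B_2(x_j,\delta/2)$ are pairwise disjoint and all lie in $B_2(0,R+\delta/2)$. Comparing volumes gives
\[
n\,\omega_D(\delta/2)^D\le \omega_D(R+\delta/2)^D,
\qquad\text{so}\qquad
n\le\Big(1+\frac{2R}{\delta}\Big)^D .
\]
4. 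Since $\delta<R$, we have $1+2R/\delta< 3R/\delta$. Therefore
\[
\mathcal N_\delta(E)\le\Big(\frac{3R}{\delta}\Big)^{2n_{\mathrm v}} .
\]

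Taking $C_{\mathrm{vol}}=3$ combines both regimes into the stated bound, and this constant is independent of $n_{\mathrm v}$, $R$ and $E$.

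No step is a real obstacle; the only things to watch are bookkeeping points. The constant must not depend on the dimension $2n_{\mathrm v}$. This works because the factor $\omega_D$ cancels in step 3 and the case split $\delta<R$ is what gives the clean factor $3$. Finally, note that Definition~\ref{def:covering} lets the centers lie in $\mathcal X$, but our net already has its centers in $E$, which is at least as strong.
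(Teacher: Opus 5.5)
Your proof is correct and follows the paper's argument essentially step for step: the trivial case $\delta\ge R$, a maximal $\delta$-separated set, disjoint $\delta/2$-balls inside $B_2(0,R+\delta/2)$, and $1+2R/\delta\le 3R/\delta$, giving $C_{\mathrm{vol}}=3$. The only cosmetic difference is that the paper first passes to $B_2(0,R)$ by monotonicity of covering numbers and builds the separated set there, whereas you build it directly inside $E$; that is equally valid.
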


\begin{remark}
Applying Lemma~\ref{lem:volumetric} to a globally rescaled set $aE$ simply replaces $R$ by $aR$, where $a>0$ is an arbitrary constant. Consequently, the $1/\sqrt{N}$ normalization in $F_N$ can only enter through prefactors such as
$C_{\mathrm{vol}}R$ (or Lipschitz constants), while the exponent $2n_{\mathrm v}$ remains unchanged.
\end{remark}

We now bound (i) the metric entropy of $F_N(K)$ and (ii) a uniform radius bound for its support.
The proof covers $K$ by finitely many patches, uses Lipschitz bounds for the chartwise measurement maps,
and applies a volumetric covering estimate in $\R^{2n_{\mathrm v}}$.

\begin{lemma}[Uniform boundedness of DtN operator on $K$]\label{lem:DtN-bdd}
Define \(r_K \coloneqq  \sup_{\gamma\in K}\|\Lambda_\gamma\|_Y\). Then \(r_K<\infty\).
\end{lemma}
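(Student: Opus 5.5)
Two routes are available for showing $r_K<\infty$: a direct PDE energy estimate, and a compactness argument built on the chart structure already set up. The energy estimate is the primary argument, since it is self-contained; the chart argument serves as an independent check.

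\emph{Direct energy bound.} Every $\gamma\in K$ satisfies $\min\{1,\kappa\}\le\gamma\le\max\{1,\kappa\}$ almost everywhere, independently of $P$. Fix $f\in H^{1/2}(\partial\Omega)$ and let $Ef\in H^1(\Omega)$ be a bounded right inverse of the trace, with $\|Ef\|_{H^1}\le C_\Omega\|f\|_{H^{1/2}}$.
\begin{itemize}
\item Write $u=Ef+w$ with $w\in H^1_0(\Omega)$, and test the weak form \eqref{eqn:weak_form} with $w$.
\item Coercivity (using $\gamma\ge\min\{1,\kappa\}$) and Poincar\'e's inequality then give $\|u\|_{H^1}\le C(\Omega,\kappa)\|f\|_{H^{1/2}}$.
\item For $\psi\in H^{1/2}(\partial\Omega)$, the weak form \eqref{weakform} gives $\langle\Lambda_\gamma f,\psi\rangle=\int_\Omega\gamma\nabla u\cdot\nabla(E\psi)\diff x$. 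This is independent of the extension chosen, by \eqref{eqn:weak_form}.
\item Cauchy--Schwarz then yields $|\langle\Lambda_\gamma f,\psi\rangle|\le\max\{1,\kappa\}\,C\,C_\Omega\|f\|_{H^{1/2}}\|\psi\|_{H^{1/2}}$.
\end{itemize}
Taking the supremum over $\psi$ bounds $\|\Lambda_\gamma f\|_{H^{-1/2}}$, and hence gives $\|\Lambda_\gamma\|_Y\le C(\Omega,\kappa)$ uniformly over $K$. The constant depends only on the a priori data, consistent with Remark~\ref{rem:apriori_poly}.

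\emph{Chart argument.} By Lemma~\ref{lem:finite-atlas}, $K\subset\bigcup_{i=1}^m V_{P_i}$, and every $\gamma\in K\cap\overline V_{P_i}$ equals $\widetilde\Lambda_i(v)$ for some $v\in E_i$, where $E_i$ is compact. By Lemma~\ref{lem:DtN-C1-chart}, $\widetilde\Lambda_i$ is continuous on $\mathcal O_{P_i}\supset E_i$, so $\sup_{v\in E_i}\|\widetilde\Lambda_i(v)\|_Y<\infty$. Taking the maximum over the finitely many $i$ gives $r_K<\infty$. Equivalently, one can fix $v^{P_i}$ and use Lemma~\ref{lem:GN-Lip}'s estimate $\|\widetilde\Lambda_i(v)\|_Y\le\|\widetilde\Lambda_i(v^{P_i})\|_Y+M_i\,\mathrm{diam}(E_i)$.

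The only delicate point is in the first route. One must check that the duality pairing defining $\Lambda_\gamma$ is well defined for general $\psi$, which requires handling constants on $\partial\Omega$. This follows from standard trace theory, so no real obstacle is expected.
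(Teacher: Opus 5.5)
Your energy-estimate route is correct and is the paper's own argument: the paper invokes the uniform bounds $\min\{1,\kappa\}\le\gamma\le\max\{1,\kappa\}$ on $K$ and cites standard elliptic energy estimates, and your lift-plus-coercivity computation spells those estimates out. The ``delicate point'' about constants is a non-issue, since $\int_\Omega\gamma\nabla u\cdot\nabla(E\psi)\diff x$ is well defined and extension-independent for every $\psi\in H^{1/2}(\partial\Omega)$; the chart argument is also valid (it is $\Lambda_\gamma$, not $\gamma$, that equals $\widetilde\Lambda_i(v)$), but it is unnecessary because it uses the much heavier Lemma~\ref{lem:DtN-C1-chart} for a bound that holds for any conductivity with these pointwise bounds.
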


\begin{proof}[Proof of Lemma ~\ref{lem:DtN-bdd}]
On $K$, $\gamma$ is uniformly elliptic and bounded by
$\min\{1,\kappa\}\le \gamma(x)\le \max\{1,\kappa\}$.
Standard energy estimates for elliptic boundary value problems imply that
$\|\Lambda_\gamma\|_Y$ is uniformly bounded over $\gamma\in K$, hence $r_K<\infty$ \citep{evans2022partial}.
\end{proof}

We could now define the maximum radius for all local coordinate patches
\begin{equation}\label{eq:Rstar_poly}
R_*\coloneqq \max_{1\le i\le m}\ \sup_{v\in E_i}\|v\|_2<\infty.
\end{equation}
\begin{theorem}[Metric entropy and bounded support of discretized DtN measurements]\label{thm:FN-main}
Consider $F_N:K\to\R^{d_N}$ in \Eqref{eqn:F_N}. Fix $c_{\varepsilon_0}>0$ and for all $T\ge 2$ set
$\varepsilon_0\coloneqq T^{-c_{\varepsilon_0}}$.
Let $C_{\mathrm{vol}}>0$ be as in Lemma~\ref{lem:volumetric}. Then for all $T\ge 2$,
\begin{equation}\label{eq:entropy-FNK}
\log \mathcal{N}_{\varepsilon_0}\big(F_N(K);\|\cdot\|_2\big)\ \le\ C_F\,\log T \,,
\end{equation}
where one may take
\begin{equation}\label{eq:CF}
C_F \ \coloneqq \ 2n_{\mathrm v}\,c_{\varepsilon_0}\ +\ \frac{\log m + 2n_{\mathrm v}\log\!\big(C_{\mathrm{vol}}\,R_*\,M_*\big)}{\log 2}\,.
\end{equation}
Here $m$ and $R_*$ are as in Lemma~\ref{lem:finite-atlas}, \(M_i\) is the Lipschitz constant of \(G_{N,i}\)
given in Lemma~\ref{lem:GN-Lip}, and \(M_*\coloneqq \max_{1\le i\le m} M_i\).
Moreover, $F_N(K)$ is bounded in $\R^{d_N}$ and satisfies
\[
\sup_{\gamma\in K}\|F_N(\gamma)\|_2 \le r_{K}\,,
\]
where \(r_K\) is the finite constant from Lemma~\ref{lem:DtN-bdd}.
\end{theorem}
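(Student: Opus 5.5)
The plan is to cover $F_N(K)$ patch by patch. By Lemma~\ref{lem:finite-atlas}, $K\subset\bigcup_{i=1}^m V_{P_i}$. Every $\gamma\in K\cap V_{P_i}$ satisfies $\gamma=\varphi_{P_i}^{-1}(v)$ for some $v\in E_i$. Hence
\[
F_N(K)\subset\bigcup_{i=1}^m G_{N,i}(E_i),
\]
and subadditivity of covering numbers gives
\[
\mathcal N_{\varepsilon_0}\big(F_N(K);\|\cdot\|_2\big)\le\sum_{i=1}^m\mathcal N_{\varepsilon_0}\big(G_{N,i}(E_i);\|\cdot\|_2\big).
\]
One caveat: the centers in Definition~\ref{def:covering} may lie anywhere in $\R^{d_N}$, so no internal-net issue arises.

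For each patch, Lemma~\ref{lem:GN-Lip} says $G_{N,i}$ is $M_i$-Lipschitz, hence $M_*$-Lipschitz, with respect to $\|\cdot\|_2$ on $E_i$. Lemma~\ref{lem:cover-lip} then gives
\[
\mathcal N_{\varepsilon_0}(G_{N,i}(E_i))\le\mathcal N_{\varepsilon_0/M_*}(E_i).
\]
By the definition \eqref{eq:Rstar_poly} of $R_*$, we have $E_i\subset B_2(0,R_*)$. Lemma~\ref{lem:volumetric} then yields
\[
\mathcal N_{\varepsilon_0/M_*}(E_i)\le\max\{1,(C_{\mathrm{vol}}R_*M_*/\varepsilon_0)^{2n_{\mathrm v}}\}.
\]
If $M_*=0$ the image of each patch is a single point, and the bound is trivial.

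Substituting $\varepsilon_0=T^{-c_{\varepsilon_0}}$ and taking logarithms gives
\[
\log\mathcal N_{\varepsilon_0}\le\log m+\max\{0,\,2n_{\mathrm v}\log(C_{\mathrm{vol}}R_*M_*)+2n_{\mathrm v}c_{\varepsilon_0}\log T\}.
\]
Two cases remain. If the maximum is $0$, the bound is $\log m$. Otherwise it is $\log m+2n_{\mathrm v}\log(C_{\mathrm{vol}}R_*M_*)+2n_{\mathrm v}c_{\varepsilon_0}\log T$.

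To obtain the form $C_F\log T$, I use $T\ge2$, which gives $\log T\ge\log 2$. Then any constant $a\ge0$ satisfies $a\le (a/\log 2)\log T$, so
\[
\log m+2n_{\mathrm v}\log(C_{\mathrm{vol}}R_*M_*)\le\frac{\log m+2n_{\mathrm v}\log(C_{\mathrm{vol}}R_*M_*)}{\log 2}\,\log T
\]
whenever the left side is nonnegative. The main obstacle is a sign issue: $\log(C_{\mathrm{vol}}R_*M_*)$ could be negative, in which case the stated $C_F$ might be too small to absorb $\log m$ in the degenerate branch. I would handle this by enlarging $C_{\mathrm{vol}}$ if necessary, since the constant in Lemma~\ref{lem:volumetric} can always be increased, so that $C_{\mathrm{vol}}R_*M_*\ge1$. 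With that normalization both branches are bounded by $C_F\log T$. If enlarging is not acceptable, I would note that the formula should be read with $\log^+$.

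For boundedness, Lemma~\ref{lem:RN-stable} and the relation $\|\mathrm{vec}(\cdot)\|_2=\|\cdot\|_{\mathrm{HS}}$ give
\[
\|F_N(\gamma)\|_2=N^{-1/2}\|R_N(\Lambda_\gamma)\|_{\mathrm{HS}}\le\|\Lambda_\gamma\|_Y\le r_K.
\]
Lemma~\ref{lem:DtN-bdd} guarantees $r_K<\infty$, and taking the supremum over $\gamma\in K$ gives the bound.
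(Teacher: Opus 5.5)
Your argument is essentially the paper's proof. It uses the same patch cover $F_N(K)\subset\bigcup_i G_{N,i}(E_i)$, the Lipschitz transfer of coverings, the volumetric bound on $B_2(0,R_*)$, absorption of constants via $\log T\ge\log 2$, and the Hilbert--Schmidt/operator-norm estimate for the radius. Your sign check is warranted: the paper silently drops the $\max\{1,\cdot\}$ and applies $a\le (a/\log 2)\log T$ without checking $a\ge 0$. Your choice of $C_{\mathrm{vol}}$ large enough that $C_{\mathrm{vol}}R_*M_*\ge 1$, together with $\varepsilon_0\le 1$, repairs both points.
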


\begin{proof}
By Lemma~\ref{lem:finite-atlas}, $K\subset\bigcup_{i=1}^m V_{P_i}$, hence
\[
F_N(K)\subset \bigcup_{i=1}^m F_N\big(K\cap \overline{V}_{P_i}\big)
= \bigcup_{i=1}^m G_{N,i}(E_i),
\]
where $E_i\subset\R^{2n_{\mathrm v}}$ is the $i$-th parameter patch and $G_{N,i}:E_i\to\R^{d_N}$ is defined
in Lemma~\ref{lem:GN-Lip}.
Therefore for any $\varepsilon>0$,
\begin{equation}\label{eqn:NF_N(K)}
\mathcal{N}_\varepsilon\big(F_N(K);\|\cdot\|_2\big)
\le \sum_{i=1}^m \mathcal{N}_\varepsilon\big(G_{N,i}(E_i);\|\cdot\|_2\big)
\le \sum_{i=1}^m \mathcal{N}_{\varepsilon/M_i}(E_i;\|\cdot\|_2),
\end{equation}
where the last inequality follows from Lemma~\ref{lem:GN-Lip} and Lemma~\ref{lem:cover-lip}.

Each $E_i\subset\R^{2n_{\mathrm v}}$ is bounded and satisfies $E_i\subset B_2(0,R_*)$ by definition of $R_*$.
Applying Lemma~\ref{lem:volumetric} gives, for every $\varepsilon>0$,
\[
\mathcal{N}_{\varepsilon/M_i}(E_i;\|\cdot\|_2)
\le \max\Big\{1,\ \Big(\frac{C_{\mathrm{vol}}\,R_*\,M_i}{\varepsilon}\Big)^{2n_{\mathrm v}}\Big\}.
\]
Hence,
\[
\mathcal{N}_\varepsilon\big(F_N(K);\|\cdot\|_2\big)
\le m\max\Big\{1,\ \Big(\frac{C_{\mathrm{vol}}\,R_*\,M_*}{\varepsilon}\Big)^{2n_{\mathrm v}}\Big\}.
\]
Taking $\varepsilon=\varepsilon_0=T^{-c_{\varepsilon_0}}$ yields
\[
\log \mathcal{N}_{\varepsilon_0}\big(F_N(K)\big)
\le \log m + 2n_{\mathrm v}\log\!\big(C_{\mathrm{vol}}R_*M_*\big) + 2n_{\mathrm v}c_{\varepsilon_0}\log T.
\]
Since $T\ge 2$, the additive constant satisfies
\[
\log m + 2n_{\mathrm v}\log\!\big(C_{\mathrm{vol}}R_*M_*\big)
\le \frac{\log m + 2n_{\mathrm v}\log\!\big(C_{\mathrm{vol}}R_*M_*\big)}{\log 2}\,\log T,
\]
which gives \Eqref{eq:entropy-FNK} with $C_F$ in \Eqref{eq:CF}.

We now show a uniform radius bound for $F_N(K)$.
Using $\|F_N(\gamma)\|_2=\frac{1}{\sqrt{N}}\|R_N(\Lambda_\gamma)\|_{\mathrm{HS}}$,
Lemma~\ref{lem:RN-stable}, and Lemma~\ref{lem:DtN-bdd},
\[
\|F_N(\gamma)\|_2 \le \|\Lambda_\gamma\|_Y \le r_{K}.
\]
Taking the supremum over $\gamma\in K$ completes the proof.
\end{proof}

\subsection{Convergence guarantee for diffusion completion}\label{sec:ddpm_FNK}
Consider the (scaled) full discretized DtN measurement vector
\[
X_0 \coloneqq  F_N(\gamma)\in\R^{d_N},
\]
and the corresponding observed data
\[
Z \coloneqq  (\mathbf{M},\; \mathbf{M}\odot X_0)\in \{0,1\}^{d_N}\times\R^{d_N}.
\]
Note that $F_N$ includes the normalization $1/\sqrt{N}$ in \Eqref{eqn:F_N}.

We recall the unconditional result of \citet[Thm.~2]{liang2025low} and will apply it to each conditional
distribution $p_{X_0\mid Z=z}$ by treating $z$ as a fixed parameter. 

\begin{lemma}[Unconditional DDPM convergence under low intrinsic complexity
{\rm(\citealp[Thm.~2]{liang2025low})}]\label{lem:liang-ddpm}
Let $X_0\sim p_{\mathrm{data}}$ be a distribution on $\R^d$ satisfying Assumptions~1--3 of
\citet{liang2025low} (intrinsic dimension $k$ quantified by metric entropy at $\epsilon_0=T^{-c_{\epsilon_0}}$,
bounded support with radius $R=T^{c_R}$, and averaged $\ell_2$ score error $\varepsilon_{\mathrm{score}}$).
Then the DDPM sampler satisfies
\begin{equation}\label{eq:TV_bound_uncond}
\mathrm{TV}\bigl(p_{X_1},\,p_{Y_1}\bigr)
\ \lesssim\ \frac{k\,\log^3 T}{T}\ +\ \varepsilon_{\mathrm{score}}\sqrt{\log T}\,.
\end{equation}
\end{lemma}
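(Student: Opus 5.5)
This lemma is imported verbatim from \citet[Thm.~2]{liang2025low}, so in the paper it suffices to check that our setting meets its hypotheses and to cite it. For completeness, here is the route I would take to reprove it. It follows the standard KL-decomposition analysis of DDPM, sharpened by the low-complexity assumption. Let $P$ be the law of the forward path $(X_T,\dots,X_1)$ and $Q$ the law of the sampler path $(Y_T,\dots,Y_1)$. By Pinsker and the data-processing inequality, $\mathrm{TV}(p_{X_1},p_{Y_1})\le \sqrt{\tfrac12\mathrm{KL}(P\|Q)}$. The chain rule then gives
\[
\mathrm{KL}(P\|Q)=\mathrm{KL}(p_{X_T}\|\mathcal N(0,I_d))+\sum_{t=2}^{T}\mathbb E_{x_t\sim p_{X_t}}\mathrm{KL}\bigl(p_{X_{t-1}\mid X_t=x_t}\,\big\|\,p_{Y_{t-1}\mid Y_t=x_t}\bigr).
\]
The initialization term is exponentially small once $\bar\alpha_T\le T^{-c}$, using bounded support $R=T^{c_R}$. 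A TV-level bound that is linear in $\varepsilon_{\mathrm{score}}$, instead of a square-root bound, needs a slightly finer coupling argument, but the structure is the same.

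Fix $t$ and condition on $X_t=x_t$. The true reverse transition is proportional to $p_{X_{t-1}}(x_{t-1})\exp\bigl(-|x_t-\sqrt{\alpha_t}x_{t-1}|^2/2(1-\alpha_t)\bigr)$. The DDPM transition with the exact score is the Gaussian with mean $(x_t+(1-\alpha_t)s_t^\star(x_t))/\sqrt{\alpha_t}$ and variance $\sigma_t^2$. I would Taylor expand $\log p_{X_{t-1}}$ around that mean. The first-order term matches the drift, because of the Tweedie relation $s_t^\star(x)=-(x-\sqrt{\bar\alpha_t}\,\mathbb E[X_0\mid X_t=x])/(1-\bar\alpha_t)$. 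The remaining error is governed by the posterior covariance $\mathrm{Cov}(X_0\mid X_t=x_t)$ (the Jacobian of the score) and by third-order terms. This produces a per-step discretization KL of order $\bigl(\tfrac{1-\alpha_t}{1-\bar\alpha_t}\bigr)^2$ times a posterior-spread quantity. The score error contributes $\tfrac{(1-\alpha_t)^2}{\sigma_t^2}\mathbb E\|s_t-s_t^\star\|^2$, and with the usual schedule $1-\alpha_t\asymp \tfrac{\log T}{T}\,(1-\bar\alpha_{t})$ this sums to $\varepsilon_{\mathrm{score}}^2\log T$.

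The heart of the matter, and the step I expect to be hardest, is to bound that posterior spread by $k$ rather than by the ambient $d$. My approach is to cover the support with the $\mathcal N_{\epsilon_0}\le e^{k\log T}$ balls of radius $\epsilon_0=T^{-c_{\epsilon_0}}$. Under $X_t=\sqrt{\bar\alpha_t}X_0+\sqrt{1-\bar\alpha_t}\,\overline W$, a union bound over these centers together with Gaussian concentration shows the following with probability $1-T^{-C}$: only centers whose projected distance to $x_t$ is within $O(\sqrt{(1-\bar\alpha_t)k\log T})$ of the minimum receive non-negligible posterior weight. Hence $\mathrm{tr}\,\mathrm{Cov}(X_0\mid X_t)\lesssim k\log T\,(1-\bar\alpha_t)/\bar\alpha_t$, plus an $\epsilon_0^2$ term, which is negligible. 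The ambient dimension $d$ then enters only through polylogarithmic tails on the bad event, which the bounded radius $R=T^{c_R}$ controls.

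Substituting this into the per-step expansion gives a step error of $k\log^2T/T^2$. Summing over $T$ steps, and applying the finer TV argument rather than plain Pinsker, gives $k\log^3T/T$. Adding the score term yields \eqref{eq:TV_bound_uncond}. For our use I would only need to verify the three assumptions on $F_N(K)$: the entropy bound $k=C_F$ follows from Theorem~\ref{thm:FN-main}, and the bounded support follows from $r_K\le T^{c_R}$.
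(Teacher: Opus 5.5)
Your primary route, which is to import \citet[Thm.~2]{liang2025low} and check its hypotheses, is exactly what the paper does. Lemma~\ref{lem:liang-ddpm} is not proved in the paper; the hypothesis check appears in the proof of Theorem~\ref{thm:cond-ddpm}. There, $\mathrm{supp}\,p_{X_0\mid Z=z}\subset F_N(K)$ lets the entropy and radius bounds of Theorem~\ref{thm:FN-main} transfer, $r_K\le T^{c_R}$ gives bounded support, and Assumption~\ref{ass:cond_score} replaces their Assumption~3. One small difference: you take $k=C_F$, whereas the paper takes $k=2n_{\mathrm v}$ with $C_{\mathrm{cover}}=C_F/(2n_{\mathrm v})$. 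Both instantiate Assumption~1, but only the paper's choice gives the leading term $2n_{\mathrm v}\log^3T/T$ stated in Theorem~\ref{thm:cond-ddpm}.

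The sketch you add for completeness would not prove the stated rate. The step you defer to ``the finer TV argument'' is the whole content of the cited theorem.
\begin{itemize}
\item Your chain rule bounds the divergence between the \emph{path} laws $P,Q$. That divergence is genuinely of order $\sqrt{k/T}$, not $k/T$, so the rate is lost in the data-processing step from paths to time-$1$ marginals.
\item Take $X_0\sim\mathcal N(0,I_k)$. The DDPM kernel with exact score has the right mean but relative variance error $\frac{(1-\alpha_t)\bar\alpha_{t-1}}{1-\bar\alpha_t}$. Under your schedule this is $\asymp\frac{\log T}{T}$ on $\gtrsim T/\log T$ steps.
\item Hence $\mathrm{KL}(P\|Q)\gtrsim k\log T/T$, and the path laws are at least of order $\sqrt{k\log T/T}$ apart in TV. The time-$1$ marginals differ only by $O(\sqrt{k}\log T/T)$.
\item So Pinsker gives at best $\sqrt{k\log^3T/T}+\varepsilon_{\mathrm{score}}\sqrt{\log T}$. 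Summing per-step kernel TVs, each $\asymp\sqrt{k}\,\mathrm{polylog}(T)/T$, is worse.
\item You also place the loss in the wrong term. The score term $\varepsilon_{\mathrm{score}}^2\log T$ in KL already becomes the linear $\varepsilon_{\mathrm{score}}\sqrt{\log T}$ after Pinsker. It is the discretization term that loses a square root.
\end{itemize}
The cited theorem instead compares the marginals directly. Roughly, one tracks $\log(p_{Y_t}/p_{X_t})$ along the sampler on a high-probability typical set. Thanks to the coefficients $1-\alpha_t$ and $\sigma_t$ of \citet{ho2020denoising}, the first-order terms cancel. Each step then changes the log-ratio only by a second-order $O(k\,\mathrm{polylog}(T)/T^2)$, which sums to $k\log^3T/T$.

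Your per-step expansion also has a $k$ versus $k^2$ gap. Let $\Sigma_t=\frac{\bar\alpha_t}{1-\bar\alpha_t}\mathrm{Cov}(X_0\mid X_t)$. By the second-order Tweedie formula, the precision of the true reverse kernel differs from the DDPM precision by the relative amount $\frac{1-\alpha_t}{1-\bar\alpha_t}\Sigma_{t-1}$. The discretization error therefore involves $\mathrm{tr}(\Sigma_{t-1}^2)$. Plugging in your pointwise bound $\mathrm{tr}\,\Sigma_t\lesssim k\log T$ only gives $\mathrm{tr}(\Sigma_t^2)\le(\mathrm{tr}\,\Sigma_t)^2\lesssim k^2\log^2T$. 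Linear dependence on $k$ needs an extra ingredient. Options include a $k$-independent bound on $\|\Sigma_t\|_{\mathrm{op}}$, or an averaging identity across $t$ (law of total covariance or I-MMSE type) that controls $\sum_t\mathbb E\,\mathrm{tr}(\Sigma_t^2)$ through $\mathbb E\,\mathrm{tr}\,\Sigma_t$.
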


In our polygonal EIT setting, the natural parameter dimension is $k=2n_{\mathrm v}$ (ordered vertices in
$\R^2$). The metric entropy bound for intrinsic dimension and the boundedness of support have been verified in
Theorem~\ref{thm:FN-main}, which depends on the assumptions for admissible polygons. We assume the conditional score estimation error as follows.

\begin{assumption}[$\ell_2$ conditional score error]\label{ass:cond_score}
There exists $\varepsilon_{\mathrm{score}}\ge 0$ such that for $Z$-almost every $z$,
\[
\frac{1}{T}\sum_{t=1}^T
\mathbb E\!\left[\ \big\|s_{\theta}(X_t,z,t)-s_t^{\star}(X_t,z)\big\|_2^2\ \mid Z=z \right]
\ \le\ \varepsilon_{\mathrm{score}}^2\,.
\]
\end{assumption}

\begin{remark}\label{rem:score-assump}
Assumption~\ref{ass:cond_score} abstracts the error incurred by learning the conditional score from data
using a neural network. Providing a rigorous, nonasymptotic bound for $\varepsilon_{\mathrm{score}}$ is a
nontrivial learning-theoretic problem and typically depends on the model class, training procedure, and
regularity properties of the target conditional score.
In this paper we follow the common approach in the diffusion convergence literature and treat
$\varepsilon_{\mathrm{score}}$ as an exogenous accuracy parameter, as in
\citep{chen2023improved,benton2023nearly,liang2025low}.
\end{remark}
With conditional score error estimation in Assumption~\ref{ass:cond_score}, we
verify the assumptions of \citet[Thm.~2]{liang2025low} for each conditional law $p_{X_0\mid Z=z}$ and obtain
the following guarantee.
\begin{theorem}[Conditional DDPM completion error in TV]\label{thm:cond-ddpm}
Consider the set of full DtN measurements from all admissible conductivities $F_N(K)\subset\R^{d_N}$, and let
$\epsilon_0\coloneqq T^{-c_{\epsilon_0}}$ with $c_{\epsilon_0}>0$ as in \citealp[Assumption~1]{liang2025low}. Assume furthermore that Assumption~\ref{ass:cond_score} holds. Then for $Z$-almost every $z$, the conditional DDPM sampler produces $Y_1 \mid Z=z$ whose law satisfies
\begin{equation}\label{eq:TV_bound_cond}
\mathrm{TV}\bigl(p_{X_1\mid Z=z},\,p_{Y_1\mid Z=z}\bigr)
\ \lesssim\ \frac{2n_{\mathrm v}\,\log^3 T}{T}\ +\ \varepsilon_{\mathrm{score}}\sqrt{\log T}\,.
\end{equation}
\end{theorem}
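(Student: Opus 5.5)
The plan is to fix $z$ in a set of full $Z$-measure and apply Lemma~\ref{lem:liang-ddpm} to the conditional law $p_{\mathrm{data}}\coloneqq p_{X_0\mid Z=z}$ on $\R^{d_N}$. The conditional DDPM sampler of Definition~\ref{def:cond-DDPM-sampler} with $z$ frozen is exactly the unconditional sampler of Definition~\ref{def:ddpm}, run with score estimator $s_t(\cdot)\coloneqq s_\theta(\cdot,z,t)$. The forward chain started from $p_{X_0\mid Z=z}$ has marginals $p_{X_t\mid Z=z}$, because the noise $W_t$ is independent of $(X_0,Z)$. Its true score is therefore $s_t^\star(\cdot,z)$. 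So it suffices to check Assumptions~1--3 of \citet{liang2025low} for this fixed conditional law, with constants uniform in $z$. Uniformity is what keeps the implicit constant in $\lesssim$ independent of $z$.

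\textbf{Support and entropy.} Since $X_0=F_N(\gamma)$ with $\gamma\in K$ almost surely, the unconditional law is supported in $F_N(K)$. Hence for a.e.\ $z$, the conditional law $p_{X_0\mid Z=z}$ is supported in the closed set $\overline{F_N(K)}$; a regular conditional distribution exists because we are on a Polish space. Passing to the closure changes covering numbers only by replacing $\varepsilon_0$ with $2\varepsilon_0$, or not at all if we use closed balls.

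\begin{itemize}
\item \emph{Entropy (Assumption~1).} Theorem~\ref{thm:FN-main} gives $\log\mathcal N_{\varepsilon_0}(F_N(K))\le C_F\log T$. Here $C_F=2n_{\mathrm v}c_{\varepsilon_0}+O(1)$, with the $O(1)$ term depending only on the a priori data. This is the form ``$k\log T$'' with intrinsic dimension $k\asymp 2n_{\mathrm v}$, up to constants absorbed into $\lesssim$.
\item \emph{Bounded support (Assumption~2).} Theorem~\ref{thm:FN-main} gives $\|x\|_2\le r_K$ on the support. Since $r_K$ is a constant, $r_K\le T^{c_R}$ for any $c_R>0$ once $T$ is large. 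For small $T$ the bound is trivial, because TV is at most $1$.
\item \emph{Score error (Assumption~3).} This is exactly Assumption~\ref{ass:cond_score} for the given $z$.
\end{itemize}

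\textbf{Conclusion.} Invoking \eqref{eq:TV_bound_uncond} with $k=2n_{\mathrm v}$ gives \eqref{eq:TV_bound_cond} for every $z$ in the full-measure set.

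\textbf{Main obstacle.} The delicate step is matching the paper's entropy bound to the precise form of Assumption~1 in \citet{liang2025low}. That assumption asks for $\log\mathcal N_{\varepsilon_0}\le C_{\mathrm{cover}}\,k\log T$, with $C_{\mathrm{cover}}$ entering only the hidden constant. One must check that the additive term $\log m+2n_{\mathrm v}\log(C_{\mathrm{vol}}R_*M_*)$ can be absorbed this way. It can: $m,R_*,M_*$ depend only on the a priori data (Remark~\ref{rem:apriori_poly}), and $M_*$ is independent of $N$ by Lemma~\ref{lem:GN-Lip}. So the hidden constant is independent of $N$, $T$ and $z$.

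A secondary point is the use of conditioning for a degenerate conditional law, for example a point mass when $z$ determines $X_0$. The cited theorem needs no density for $X_0$, only the support and entropy conditions, so degenerate laws are allowed. Subsets of $\overline{F_N(K)}$ inherit the covering bound, since covering numbers are monotone up to the factor-$2$ radius adjustment.
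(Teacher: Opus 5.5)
Your proposal is correct and follows essentially the same route as the paper. You freeze $z$, treat the conditional sampler as an unconditional DDPM for $p_{X_0\mid Z=z}$, verify Assumptions~1--3 of \citet{liang2025low} via Theorem~\ref{thm:FN-main} and Assumption~\ref{ass:cond_score}, and invoke Lemma~\ref{lem:liang-ddpm}. Your extra care (closure of the support, uniformity of constants in $z$, and treating small $T$ separately instead of the paper's choice $c_R\ge\log_2 r_K$) only tightens details the paper passes over quickly.
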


\begin{proof}
Fix $z$ such that Assumption~\ref{ass:cond_score} holds. Since $X_0\in F_N(K)$ almost surely, it follows
that $\mathrm{supp}(p_{X_0\mid Z=z})\subset F_N(K)$. Consequently, for any $\epsilon>0$,
\[
\mathcal{N}_\epsilon\bigl(\mathrm{supp}(p_{X_0\mid Z=z});\|\cdot\|_2\bigr)
\ \le\ \mathcal{N}_\epsilon\bigl(F_N(K);\|\cdot\|_2\bigr),
\qquad
\sup_{x\in\mathrm{supp}(p_{X_0\mid Z=z})}\|x\|_2\ \le\ r_K.
\]

Taking $\epsilon=\epsilon_0$ yields
\[
\log \mathcal{N}_{\epsilon_0}\bigl(\mathrm{supp}(p_{X_0\mid Z=z})\bigr)\ \le\ C_F\log T,
\]
by metric entropy bound from Theorem~\ref{thm:FN-main}.
Therefore \citealp[Assumption~1]{liang2025low} holds for $p_{X_0\mid Z=z}$ with intrinsic dimension
$k=2n_{\mathrm v}$ and constant $C_{\mathrm{cover}}\coloneqq C_F/(2n_{\mathrm v})$.
The radius bound from Theorem~\ref{thm:FN-main} verifies \citealp[Assumption~2]{liang2025low}, since $r_K$ does not depend on $T$ and we can choose $c_R\ge \log_2 r_K$ such that $r_K\le T^{c_R}$ for $T\ge 2$.

Assumption~\ref{ass:cond_score} is the conditional analogue of \citealp[Assumption~3]{liang2025low}.
For each fixed $z$, the conditional DDPM is an unconditional DDPM on $\R^{d_N}$ with data law
$p_{X_0\mid Z=z}$. Applying Lemma~\ref{lem:liang-ddpm} to $p_{X_0\mid Z=z}$ yields \Eqref{eq:TV_bound_cond}.
\end{proof}

\begin{remark}[Why the conclusion is stated at time $t=1$]
The TV guarantee above compares $p_{X_1\mid Z=z}$ and $p_{Y_1\mid Z=z}$, consistent with
\citealp[Thm.~2]{liang2025low}. For data distributions supported on low-dimensional sets in $\R^d$,
a TV guarantee directly for $p_{X_0\mid Z=z}$ is generally impossible against any absolutely continuous
sampler output, due to mutual singularity.
\end{remark}

The ambient dimension of the measurement vector is $d_N=N^2$, whereas the admissible family $F_N(K)$ is
parameterized by only $2n_{\mathrm v}$ real parameters (ordered vertices in $\R^2$).
This low-dimensional structure is reflected in the metric-entropy bound \Eqref{eq:entropy-FNK}:
after passing to local coordinates, the forward map is Lipschitz on each patch, so the covering numbers of
$F_N(K)$ are controlled by volumetric coverings in $\R^{2n_{\mathrm v}}$, yielding the power $2n_{\mathrm v}$
in the covering-number bound.

\section{Numerical Experiments}\label{sec:numeric}

In this section, we evaluate the proposed method against baseline approaches on two tasks: (i) completing full DtN measurements from randomly sampled sparse measurements, and (ii) reconstructing EIT conductivities from sparse measurements. The details of the experimental setup and model training are provided in Sections~\ref{sec:data_gen} and~\ref{sec:Implement_details}, while the testing results are reported in Sections \ref{sec:result_DC} and~\ref{sec:result_IP}.

\subsection{Data generation}\label{sec:data_gen}
\paragraph{Discretizations} We consider the unit disk as the computational domain $\Omega$, which is discretized over a conforming triangular mesh (see Figure \ref{fig:conductivity_discretizations}). 
 This results in $N_\mathcal{I} = 1324$ interior nodal points, $N_\mathcal{B} = 128$ boundary nodal points, and 2774 triangular elements. The elliptic PDE \Eqref{eq:forward} is then solved using the finite element method (FEM) with P1 elements. Consequently, the DtN measurement matrix has size $128 \times 128$, while the conductivity $\gamma$ is discretized into a vector of length 2774 at the centroids of all elements. To facilitate the use of conductivity data in deep learning models, $\gamma$ is zero-padded and extended to a function on the square $[-1,1]\times[-1,1]$, and then discretized over a $128 \times 128$ grid via linear interpolation, as shown in Figure~\ref{fig:conductivity_discretizations}.  

\paragraph{Conductivity distributions}
We consider two different distributions of conductivity throughout this section, which are defined as follows.
\begin{itemize}
\item \textbf{Disks:} 
This dataset consists of conductivities over a unit disk $\Omega$ with unit background value, together with constant values over a few circular inclusions. In particular, the conductivities are defined as the following.
\[
\gamma(x) \;=\; 1 \;+\; \sum_{i=1}^{n} \bigl(c_i-1\bigr)\,\mathbf{1}_{\{\|x-x_i\|_2 \le r_i\}}(x), \qquad x\in\Omega\,,
\]
where \(n\in\{2,3,4,5\}\), and the inclusion centers \(x_i\in\Omega\), radii \(r_i\), and contrasts \(c_i\) vary across samples.
For each fixed \(n\), we independently draw radii $r_i \sim \mathcal{U}[0.2,\,0.4]$, contrasts $c_i \sim \mathcal{U}[2,\,8]$, and centers $x_i$ uniformly over $\Omega$. We use rejection sampling to ensure that the inclusions are fully contained within $\Omega$ and mutually disjoint. That is,
\[
\dist(x_i,\partial\Omega)\;>\; r_i \quad\text{for all } i,
\qquad\text{and}\qquad
\|x_i-x_j\|_2 \;>\; r_i+r_j \quad\text{for all } i\neq j\,.
\]
For each \(n\in\{2,3,4,5\}\), \(500\) admissible training samples and \(100\) admissible test samples are generated, yielding a total of \(2000\) training and \(400\) test conductivities.

\item \textbf{Shepp--Logan Phantom:} This dataset is based on \citet{shepp1974fourier}, a standard benchmark in medical imaging that consists of piecewise constant functions that model human skulls, ventricles and tumors.
Our code follows the setup in \citet{Chung2020}.
\end{itemize}
Examples of conductivities in each dataset can be found in the leftmost columns of Figure~\ref{fig:disk_deepsolver_noiseless} and Figure~\ref{fig:sl_deepsolver_noiseless} respectively.

\begin{figure}[!htbp]
    \centering
    \includegraphics[width=0.8\linewidth]{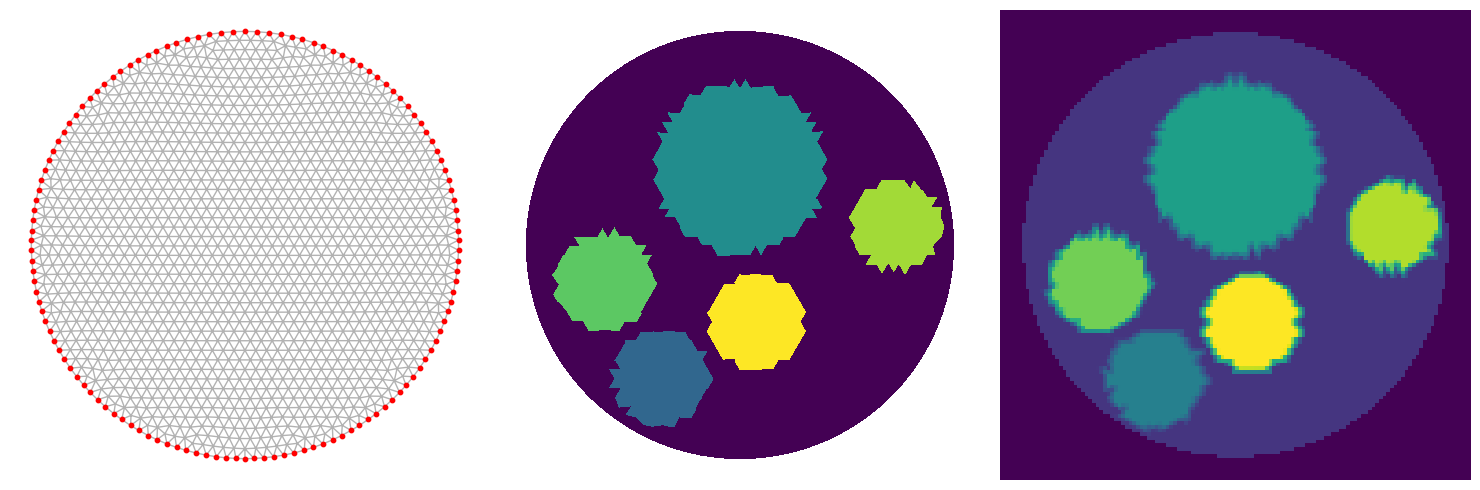}
\caption{Discretization of conductivity $\gamma$. \textbf{Left:} Triangular mesh over the unit disk where red boundary points mark candidate sensor locations. 
\textbf{Middle:} A conductivity over the triangular mesh from the Disk distribution. 
\textbf{Right:} The same conductivity converted to a uniform $128\times128$ Cartesian grid on $[-1,1]\times[-1,1]$ with zero padding.
}
\label{fig:conductivity_discretizations}
\end{figure}

\paragraph{Normalization of the DtN measurement} 
The DtN measurement $\mathbf{\Lambda}_\gamma$ exhibits strong heterogeneity between diagonal and off-diagonal entries, as the off-diagonal entries are fast-decaying for any $L^\infty$ coefficient $\gamma$~\citep{bebendorf2003existence,bechtel2024off}. 
Furthermore, the ill-posedness of the Calder\'on problem suggests that the difference between two measurements $\mathbf{\Lambda}_{\gamma_1}$ and $\mathbf{\Lambda}_{\gamma_2}$ is highly insensitive to their corresponding conductivity differences. As a consequence, all DtN measurements are similar to those of the background (unit) conductivity~\citep{alessandrini1988stable}. Such similarity in the data causes a significant challenge in training effective deep learning models. 
To this end, we normalize each DtN measurement by that of the background (unit) conductivity.
The normalized DtN measurement is defined as the elementwise (Hadamard) ratio $\bar{\mathbf{\Lambda}}_\gamma\coloneqq \mathbf{\Lambda}_\gamma \oslash \mathbf{\Lambda}_{\textbf{1}}$. 
All models are trained and evaluated on the normalized DtN measurement \(\bar{\mathbf{\Lambda}}_\gamma\). As illustrated in Figure~\ref{fig:DtN_normalization}, raw DtN measurements are visually similar across distinct conductivities, whereas their normalized version reveals discriminative patterns.

\begin{figure}[!htbp]
    \centering
    \includegraphics[width=0.7\linewidth]{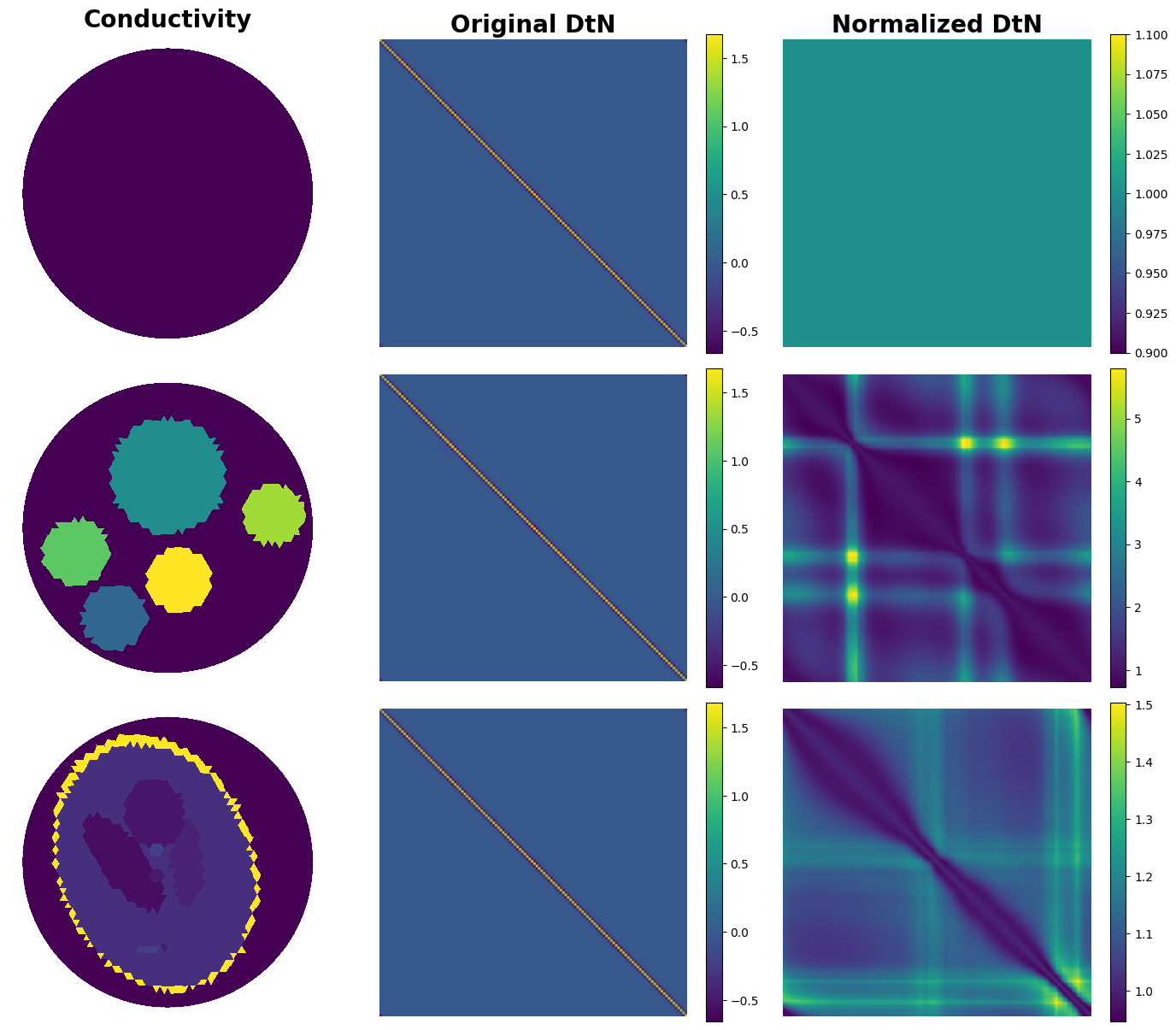}
\caption{Normalization of DtN measurements. 
\textbf{Left column:} conductivity fields (top: background; middle: disks; bottom: Shepp--Logan phantom). 
\textbf{Middle column:} raw DtN matrices computed from the corresponding conductivities. 
\textbf{Right column:} normalized DtN matrices produced by our preprocessing step.}
\label{fig:DtN_normalization}
\end{figure}

\paragraph{Measurement configurations} 
Two types of mask matrices are considered in our numerical experiments.
\begin{itemize}
    \item \textbf{Principal submatrix}: 
    Sources and receivers are placed on a fixed number of boundary nodes that are chosen uniformly at random. In this case, the mask matrix $\mathbf{M}_s$ is a principal submatrix, i.e.
\[
(\mathbf{M}_{s})_{ij} = 
\begin{cases}
1, & i\in\mathcal{S} \text{ and } j\in\mathcal{S}\,,\\[2pt]
0, & \text{otherwise}\,.
\end{cases}
\]
    where an index set $\mathcal{S} \subset [N_{\mathcal B}]$ is drawn uniformly at random without replacement. The total number of boundary nodes $|\mathcal{S}| = \lfloor \sqrt{s}\,N_{\mathcal B}\rfloor$ is determined by the sampling rate $s\in (0,1)$. 
\item \textbf{Random}: Each entry of $\mathbf{\Lambda}_\gamma$ is observed independently with probability $s$ as in \citet{bui2022bridging}. The mask $\mathbf{M}_s$ is therefore a Bernoulli random matrix with parameter $s$, i.e.,
\[
(\mathbf{M}_{s})_{ij} = 
\begin{cases}
1, & \text{w.p. } s\,,\\[2pt]
0, & \text{w.p. } 1-s\,.
\end{cases}
\]
\end{itemize}
 
Examples of both types of mask matrices are plotted in Figure \ref{fig:comparison_completion_DM_MC}.

\paragraph{Noise injection} To investigate the robustness of our methods, we apply a noise pattern considered in \citet{borcea2013study}, which is similar to multiplicative noise, to both training and testing measurements.
\begin{equation*}
    \mathbf{\Lambda}_{\gamma}^{\text{Noisy}} = \mathbf{\Lambda}_{\gamma} + \sigma_\text{noise}\mathbf{\Lambda}_{\textbf{1}}\odot\mathcal{E} \,,
\end{equation*}
where $\mathbf{\Lambda}_{\textbf{1}}$ is the background DtN measurement, $\sigma_\text{noise}\in (0,1)$ is the strength of noise, and $\mathcal{E} \in \mathbb{R}^{N_{\mathcal{B}}\times N_{\mathcal{B}}}$ is a random matrix whose entries are independently drawn from the standard normal distribution. We consider $\sigma_\text{noise}= 0.05 (5\%)$ in our experiments, and inject noise to $\mathbf{\Lambda}_\gamma$ first and then apply the normalization to obtain $\bar{\mathbf{\Lambda}}_\gamma$.

\subsection{Implementation details}\label{sec:Implement_details}
We evaluate the proposed method through two stages: data completion in Section \ref{sec:result_DC} and conductivity reconstruction in Section \ref{sec:result_IP}. 
In the data completion stage, we compare our method with the hierarchical completion baseline introduced in~\citet{bui2022bridging}. In the conductivity reconstruction stage, we adopt \texttt{IAE-Net}, an efficient neural operator introduced in~\citet{ong2022integral}, which achieves excellent accuracy on forward and inverse PDE operator learning tasks.
To demonstrate the effectiveness of our proposed method, we compare the results of three inverse problem solving strategies illustrated in Figure~\ref{fig:diagram}: 
(i) \emph{direct reconstruction from sparse measurements}; 
(ii) \emph{data completion followed by reconstruction}, where the data completion part is implemented by our diffusion model or by the hierarchical matrix completion; and (iii) \emph{direct reconstruction from full measurements}, which sets the best achievable accuracy for the specified inverse solver (e.g. \texttt{IAE-Net}).
We next details the implementations of our diffusion completion model in Section~\ref{sec:Implement_details}, the hierarchical matrix completion baseline, and the \texttt{IAE-Net} inverse solver, along with the evaluation metrics. 

\subsubsection{Data completion methods}
\paragraph{Diffusion model implementation}
We adopt the Elucidated Diffusion Model (EDM) framework \citep{karras2022elucidating} with the preconditioned denoising network $D_{\theta}$, which can be interpreted as a reparameterization of the score functions studied in \citet{song2020score, ho2020denoising}. The denoiser network $D_\theta(x,\sigma)$ in \citet{karras2022elucidating} predicts a clean DtN measurement $x_0$ from a perturbed input $x = x_0 + \sigma \varepsilon$, where $\varepsilon \sim \mathcal{N}(0,I)$ and $\sigma$ is the noise level. 
For training, we use the same loss function and noise-level sampling strategy as in \citet{karras2022elucidating}, and train the model on normalized DtN measurement data. 
The denoiser $D_{\theta}$ is implemented as a 2D U-Net \citep{ronneberger2015u}, where each DtN matrix is treated as a single-channel image. The network has a base width of 128, includes two residual blocks at each resolution level, and performs dyadic downsampling and upsampling by a factor of 2, repeated three times, with an encoder--decoder symmetric structure and skip connections. Each residual block consists of $3\times 3$ convolutions, normalization, and SiLU activations. The scalar noise level $\sigma$ is encoded using a Fourier embedding and injected into the residual blocks via affine modulation. Attention mechanisms are applied only in the bottleneck layer. 
Optimization is carried out with AdamW \citep{loshchilov2017decoupled} using a learning rate of $10^{-4}$, an exponential moving average of model parameters with decay $0.999$, and a batch size of $32$. Unless otherwise specified, all remaining EDM hyperparameters and conventions follow \citet{karras2022elucidating}.

\paragraph{Baseline model: hierarchical matrix completion} 
\begin{figure}[!htbp]
    \centering
    \includegraphics[width=1\linewidth]{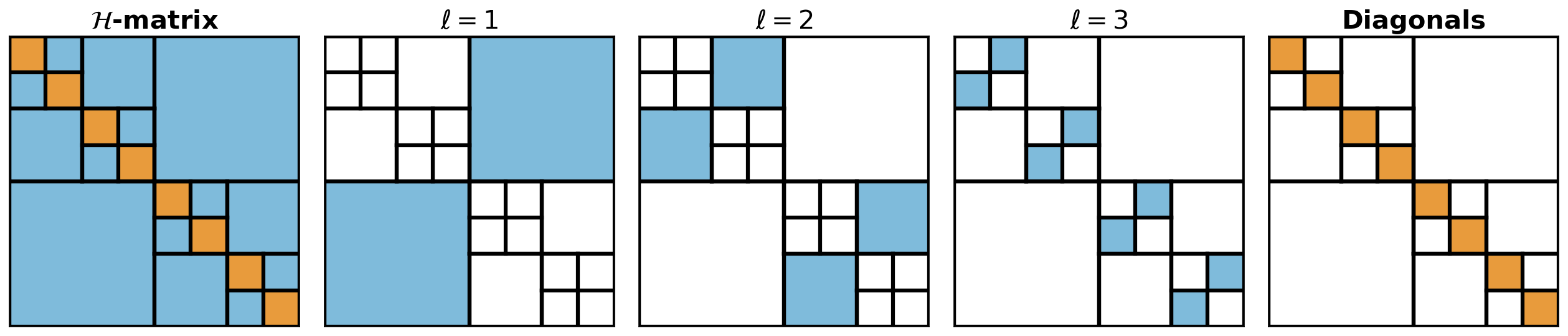}
    \caption{Hierarchical partition with level 3. \emph{Diagonal blocks} are colored in orange, and \emph{off-diagonal blocks} are colored in blue.}
    \label{fig:h-matrix}
\end{figure}
We consider the hierarchical matrix completion method in~\citet{bui2022bridging} as a baseline for completing DtN measurements. This method exploits the hierarchical off-diagonally low-rank structure of the DtN measurement matrix \citep{bebendorf2003existence,ballani2017matrices} and employs standard matrix completion methods~\citep{recht2011simpler}. Concretely, the DtN matrix is divided into a hierarchical tree of diagonal blocks and off-diagonal blocks as shown in Figure \ref{fig:h-matrix}. 
To apply matrix completion globally, a corresponding hierarchical masking matrix needs to be considered for this method.
In particular, it is assumed that the observed EIT measurements consist of all data on the diagonal blocks and partial data that are sampled uniformly at random over all off-diagonal blocks. See Figure \ref{fig:mask_comparison} for the global masking matrix considered in such hierarchical setting. 
Specifically, we implement a three-level decomposition of the measurement matrix, which leads to in total eight diagonal blocks and fourteen off-diagonal blocks. 
The matrix completion algorithm is applied in parallel to each of the off-diagonal blocks to obtain the completed data, which are then assembled together to form the completed measurement matrix.
In particular, consider an arbitrary off-diagonal block $\mathbf{\Lambda}_{\gamma}^{l} \in \mathbb{R}^{\frac{N_{\mathcal B}}{2^l}\times \frac{N_{\mathcal B}}{2^l}}$ at level $l=1,2,3$ and a random mask $\mathbf{M}_s^{l}$ of the same size, the matrix completion algorithm outputs a solution to the following constrained nuclear norm optimization problem
\begin{equation}
\label{eq:mc_nucnorm}
\widehat{\mathbf{\Lambda}}_\gamma^{l}
\;=\;
\arg\min_{X} \ \|X\|_{*}
\quad \text{subject to} \quad
\mathbf{M}_s^{l}\odot X \;=\; \mathbf{M}_s^{l}\odot \mathbf{\Lambda}_\gamma^{l}\,, \quad X \in\mathbb{R}^{\frac{N_{\mathcal B}}{2^l}\times \frac{N_{\mathcal B}}{2^l}}\,.
\end{equation}
where \(\|\cdot\|_{*}\) denotes the matrix nuclear norm. 
The above minimization problem admits a unique solution if the sampling rate in the random mask $\mathbf{M}_s^{l}$ is sufficiently large.
We implement the nuclear optimization \Eqref{eq:mc_nucnorm} with the \texttt{MOSEK} solver from the \texttt{CVXPY} package.

\subsubsection{Inverse problem solver}  
We employ the neural operator \texttt{IAE-Net}~\citep{ong2022integral} as our inverse solver, for its excellent accuracy across various PDE forward and inverse operator learning tasks.
Two $\texttt{IAE-Net}$ models are trained separately on clean data to serve as a full-data solver and sparse-data solver respectively. 
In particular, the full-data solver is trained to learn the operator \(\mathcal{G}:\bar{\mathbf{\Lambda}}_\gamma \mapsto \gamma\) that maps the full DtN measurements to target conductivity field, while the sparse-data solver approximates \(\mathcal{G}_{\mathrm{masked}}: \bar{\mathbf{\Lambda}}_\gamma^{\text{o}} \mapsto \gamma\), where \(\bar{\mathbf{\Lambda}}_\gamma^{\text{o}} = \mathbf{M}_s \odot \bar{\mathbf{\Lambda}}_\gamma\). Specifically, $\mathbf{M}_s$ is a principal submatrix mask resampled independently at each training iteration, enabling the model to adapt to diverse sparse-measurement patterns. 

Both \texttt{IAE-Net} models are configured with 32 modes and a stack of IAE blocks, using four blocks for the Disks dataset and two for the Shepp--Logan dataset. Training minimizes the mean-squared error between predicted and ground-truth conductivities, optimized with Adam and weight decay $10^{-4}$. The learning rate follows a reduce-on-plateau schedule (factor $0.5$, patience $20$), with early stopping after $40$ stagnant epochs and a maximum of $300$ epochs. The initial learning rate is $10^{-2}$ for the Disks dataset and $10^{-3}$ for the Shepp--Logan dataset.

\subsubsection{Evaluation metrics}\label{sec:eval_metr}

In order to compare the proposed data completion method with the baseline, we consider the following evaluation metrics. These metrics quantify the fidelity of the completed DtN data and the accuracy of reconstructed conductivity respectively.

\paragraph{DtN data completion error}
A core objective of our framework is to accurately reconstruct the missing DtN measurements in $\mathbf{\Lambda}^{\text{o}}_\gamma$. We measure reconstruction quality using the normalized relative Frobenius error:
\begin{equation}
\label{eqn:dtn-error}
\text{RE}\left(\widehat{\bar{\mathbf{\Lambda}}}_\gamma\right) = \frac{\|\widehat{\bar{\mathbf{\Lambda}}}_\gamma  - \bar{\mathbf{\Lambda}}_\gamma \|_F}{\|\bar{\mathbf{\Lambda}}_\gamma \|_F}\,,
\end{equation}
where $\bar{\mathbf{\Lambda}}_\gamma$ denotes the normalized ground-truth DtN measurements and $\widehat{\bar{\mathbf{\Lambda}}}_\gamma$ is the completed normalized DtN measurements.

\paragraph{Conductivity reconstruction error}
Note that the conductivity is defined on unit disk $\Omega \subset [-1,1]^2$ and padded zeros outside $\Omega$. We only measure the errors within $\Omega$. In particular, let \(\{x_i\}_{i=1}^{128}\) and \(\{y_j\}_{j=1}^{128}\) be the uniform grid coordinates on \([-1,1]\) and define
\[
\Omega_{\mathrm{grid}} \;=\; \bigl\{(i,j)\,:\ x_i^2 + y_j^2 \le 1 \bigr\},
\qquad
N_\Omega \;=\; \lvert \Omega_{\mathrm{grid}}\rvert\,.
\]
For a given neural network prediction $\hat{\gamma}$, we measure its  relative \(\ell_2\) error and the mean absolute error with respect to the ground truth $\gamma$ restricted to \(\Omega\):
\[
\mathrm{RE}\!\left(\hat{\gamma}\right)
= \frac{\displaystyle\sqrt{\sum_{(i,j)\in \Omega_{\mathrm{grid}}}\bigl(\hat{\gamma}_{i,j}-\gamma_{i,j}\bigr)^2}}
       {\displaystyle \sqrt{\sum_{(i,j)\in \Omega_{\mathrm{grid}}} \gamma_{i,j}^2}}, 
\qquad
\mathrm{MAE}\!\left(\hat{\gamma}\right)
= \frac{1}{N_\Omega}\sum_{(i,j)\in \Omega_{\mathrm{grid}}}\bigl|\hat{\gamma}_{i,j}-\gamma_{i,j}\bigr|\,.
\]

In addition, we also introduce structural similarity (SSIM) for evaluating conductivity reconstruction.
SSIM quantifies perceptual similarity by jointly comparing local luminance, contrast, and structure, thereby capturing preservation of geometry and edges more faithfully than pixel-wise metrics such as $\ell_2$ error or MAE.
Given two conductivities \(\gamma_{1},\gamma_2 \), the local structural similarity at pixel \(p\) is
\[
\mathrm{SSIM}_w(\gamma_1,\gamma_2)=
\frac{\bigl(2\mu_1(p)\mu_2(p)+C_1\bigr)\bigl(2\sigma_{12}(p)+C_2\bigr)}
     {\bigl(\mu_1(p)^2+\mu_2(p)^2+C_1\bigr)\bigl(\sigma_1^2(p)+\sigma_2^2(p)+C_2\bigr)}\,,
\]
where \(\mu_1,\mu_2\) are local means, \(\sigma_1^2,\sigma_2^2\) local variances, and \(\sigma_{12}\) the local covariance computed with a sliding window $w$. 
The reported image-level SSIM is the mean of these local SSIM:
\[
\mathrm{SSIM}(\gamma_1,\gamma_2)=\frac{1}{|\mathcal{W}|}\sum_{w\in\mathcal{W}}\mathrm{SSIM}_w(\gamma_1,\gamma_2)\,,
\]
with $\mathcal{W}$ the set of all sliding windows. In our implementation, we compute SSIM using the \texttt{scikit-image} package \citep{van2014scikit} with default settings.

\subsection{Results for data completion}\label{sec:result_DC}
We compare our diffusion-based data completion to the hierarchical matrix completion method as in \citet{bui2022bridging}. Since identical matrix completion algorithms \Eqref{eq:mc_nucnorm} are applied to all off-diagonal blocks with the same sampling rate, it is equivalent to compare data completion results restricted to a single off-diagonal block.
To this end, we implement our method and matrix completion over the upper right off-diagonal block at the first level in the subsequent subsection, and present the accuracy of data completion with various sampling rates from $s=1\%$ to $s=30\%$.

\paragraph{Off-diagonal block completion}
We evaluate both methods on the upper-right off-diagonal block of the DtN matrix, which has size $64\times64$ and a numerical rank of $10$. 
We test the diffusion completion method under principal submatrix masking with sampling rate $s=1\%$, and matrix completion method under both principal submatrix masking and random masking with various sampling rates, see Figure ~\ref{fig:comparison_completion_DM_MC}. 
We find that the diffusion model is able to reconstruct the off-diagonal block of DtN matrix within a relative error of $0.9\%$ with only $1\%$ of observed data.
In comparison, matrix completion fails to find any missing data under the same setting. This is because the masked measurement is already of low rank when the sampling rate $s=1\%$ is low, and thus becomes a solution to \Eqref{eq:mc_nucnorm}. Consequently, no missing data can be obtained from the nuclear norm optimization. 
We further test matrix completion under random masking with different sampling rate $s=1\%,15\%$ and $30\%$.
We observe that the same issue persists with low sampling rate $1\%$. An unsatisfactory reconstruction can be achieved with larger sampling rate $s=15\%$ with relative error $8.2\%$. 
When the random sampling rate is raised to $s=30\%$, a competitive reconstruction result with relative error $1.0\%$ is achieved. 
Overall, Figure \ref{fig:comparison_completion_DM_MC} illustrates that a learned distributional prior (diffusion) can interpolate across ultra-sparse observations, while low-rank matrix completion requires denser, randomly distributed observations to perform well. 

\begin{figure}[!htbp]
    \centering
    \includegraphics[width=1\linewidth]{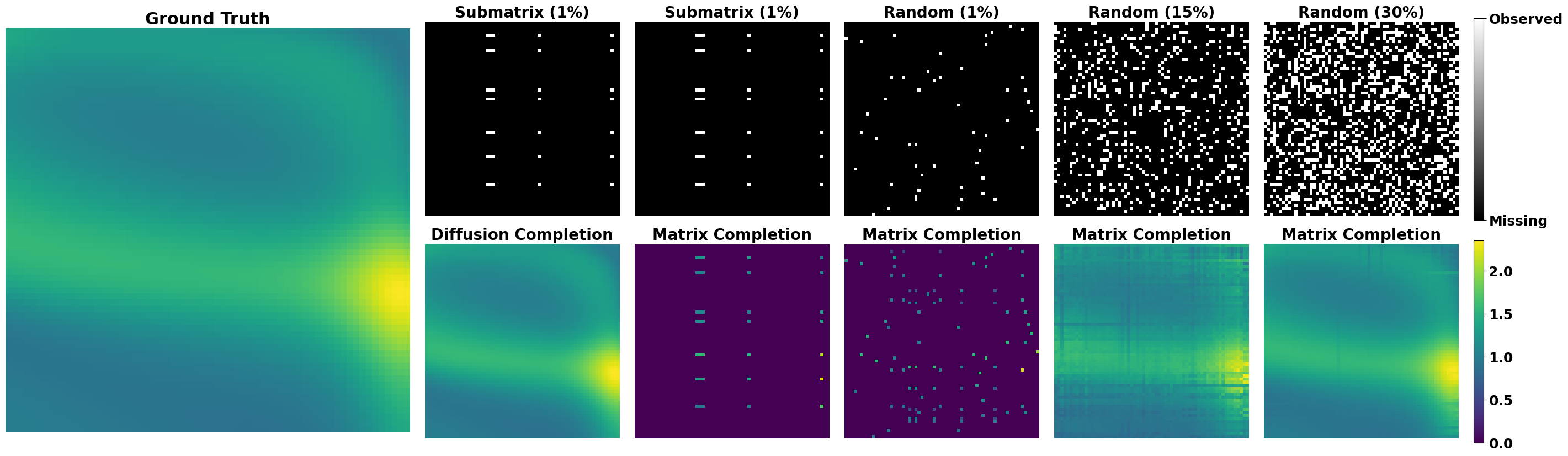}
    \caption{
    Diffusion completion versus matrix completion on an off-diagonal block under different masking patterns and sampling rate. \emph{Note that the matrix-completion baseline is only applied to off-diagonal blocks, where low-rank structure is expected, whereas our diffusion model is applied to the whole matrix and presents the off-block in this plot}
    \textbf{Left}: ground truth (normalized) upper-right block of DtN matrix. 
    \textbf{Right, top row}: masking matrices with sampling rate $s=1\%$, $s=15\%$ and $s=30\%$.
    \textbf{Right, bottom row}: corresponding reconstructions under the masks shown above. The diffusion model (first column) achieves high-quality reconstruction under the submatrix masking even at an extremely low sampling rate \(s=1\%\) with relative error (RE) \(0.9\%\). In contrast, matrix completion fails at \(s=1\%\) under both submatrix and random masking (second and third columns). Matrix completion requires substantially higher random sampling rates (fourth and fifth columns) to achieve comparable accuracy, reaching RE \(8.2\%\) at \(s=15\%\) and RE \(1.0\%\) at \(s=30\%\). In contrast, our diffusion model performs completion over the entire DtN matrix.
    }
    \label{fig:comparison_completion_DM_MC}
\end{figure}

\subsection{Results for inverse problem solving}\label{sec:result_IP}

In this section, we investigate the effectiveness of inverse problem solving with completed measurement data from the proposed method in Sections~\ref{sec:random_disk} and~\ref{sec:SL}, respectively. 
Given sparse DtN measurements, we evaluate the reconstruction performance of different methods, including \textbf{i)} a direct reconstruction by a sparse-data solver; \textbf{ii)} a level-$3$ hierarchical matrix completion followed by a full-data solver; and \textbf{iii)} a diffusion completion followed by the same full-data solver. 
Note that even with the ground truth data, the inherent nonlinearity and ill-posedness of the EIT problem can lead to non-negligible reconstruction errors, particularly under noisy or sparse measurement conditions. 
To this end, the output of a full-data solver with full data represents the best possible reconstruction from a specific inverse problem solver. We report this reconstruction as a reference to demonstrate the effectiveness of the aforementioned methods.

Specifically, to obtain reasonable reconstructions by matrix completion, we consider a higher sampling rate ($15\%$) in each off-diagonal block. This results in a global hierarchical random mask with an overall sampling rate of $25.625\%$. 
For diffusion completion and direct sparse solver, a more practical principal submatrix masking is considered with sampling rate $1\%$. 
These two different masking matrices are plotted in Figure \ref{fig:mask_comparison}.
\begin{figure}[!htbp]
    \centering
    \includegraphics[width=0.7\linewidth]{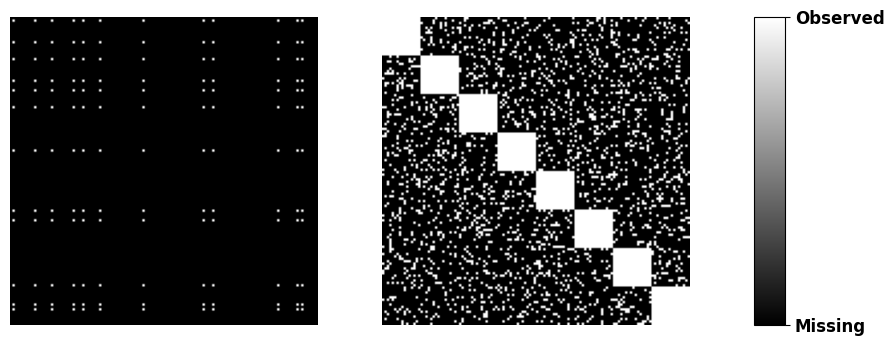}
\caption{Global masking matrices. \textbf{Left:} The mask matrix used for diffusion completion and the sparse-data solver. This mask consists of nonzero values over a principal submatrix with sampling rate $s=1\%$. \textbf{Right:} The hierarchical masking matrix with level $l=3$. In this case, all diagonal blocks are fully observed, and off-diagonal blocks are randomly sampled with sampling rate $s=15\%$, yielding an overall sampling rate of $25.625\%$.}

    \label{fig:mask_comparison}
\end{figure}

\subsubsection{Random disks}\label{sec:random_disk}
We present a visual comparison of reconstructions over different samples in Figure~\ref{fig:disk_deepsolver_noiseless} with clean data, and Figure~\ref{fig:disk_deepsolver_noise5e-2} with $5\%$ noise. A summary of errors over the testing dataset in both cases is shown in Table~\ref{tab:disk_recon_combined}.
In the noiseless setting, the full-data solver with full measurements achieves the best accuracy across all metrics, representing the solver’s best attainable performance. 
Using the same solver with diffusion-completed measurements from sparse data yields the closest performance to the best possible reconstructions. 
The direct sparse-data solver lags behind across all metrics, consistent with the observed blur and boundary erosion. Although H-matrix completion outperforms the direct sparse-data solver and is slightly worse than diffusion completion, it requires a much higher sampling rate (\(25.625\%\)) compared to the other cases (\(1\%\)). 
With \(5\%\) noise, all methods degrade slightly; diffusion completion remains superior to both the direct sparse-data solver and H-matrix completion.

\begin{table}[!htbp]
    \centering
    \caption{Reconstruction results for the \textbf{Disks} dataset. 
    \textbf{First row:} best possible reconstruction using full data.
    \textbf{Last three rows:} various inverse problem solving strategies with sparse data. The overall sampling rate is indicated in the brackets, and the best results with sparse data are marked in bold. }
    \label{tab:disk_recon_combined}
    \renewcommand{\arraystretch}{1.25}
    \begin{tabular}{@{}lcccccc@{}}
        \toprule
        \multirow{2}{*}{Method} &
        \multicolumn{3}{c}{Noiseless DtN} &
        \multicolumn{3}{c}{5\% Noisy DtN} \\
        \cmidrule(lr){2-4}\cmidrule(lr){5-7}
        & SSIM$\uparrow$ & RE$\downarrow$  & MAE$\downarrow$  
        & SSIM$\uparrow$  & RE$\downarrow$   & MAE$\downarrow$  \\
        \midrule
        full-data solver ($100\%$)        & 0.819 & 20.7\% & 0.176  & 0.808 & 21.6\% & 0.186  \\
        \hline
        diffusion completion ($1\%$) + full-data solver & \textbf{0.774} & \textbf{33.4}\% & \textbf{0.288}  & \textbf{0.762} & \textbf{34.4}\% & \textbf{0.302}  \\
 
        direct sparse-data solver ($1\%$)                     & 0.597 & 44.9\% & 0.495 & 0.578 & 46.5\% & 0.524 \\
        H-matrix completion ($25.625\%$) + full-data solver & 0.730 & 41.4\% & 0.361 & 0.712& 43.7\% & 0.388\\

        \bottomrule
    \end{tabular}
\end{table}

\begin{figure}[!htbp]
    \centering
    \includegraphics[width=\linewidth]{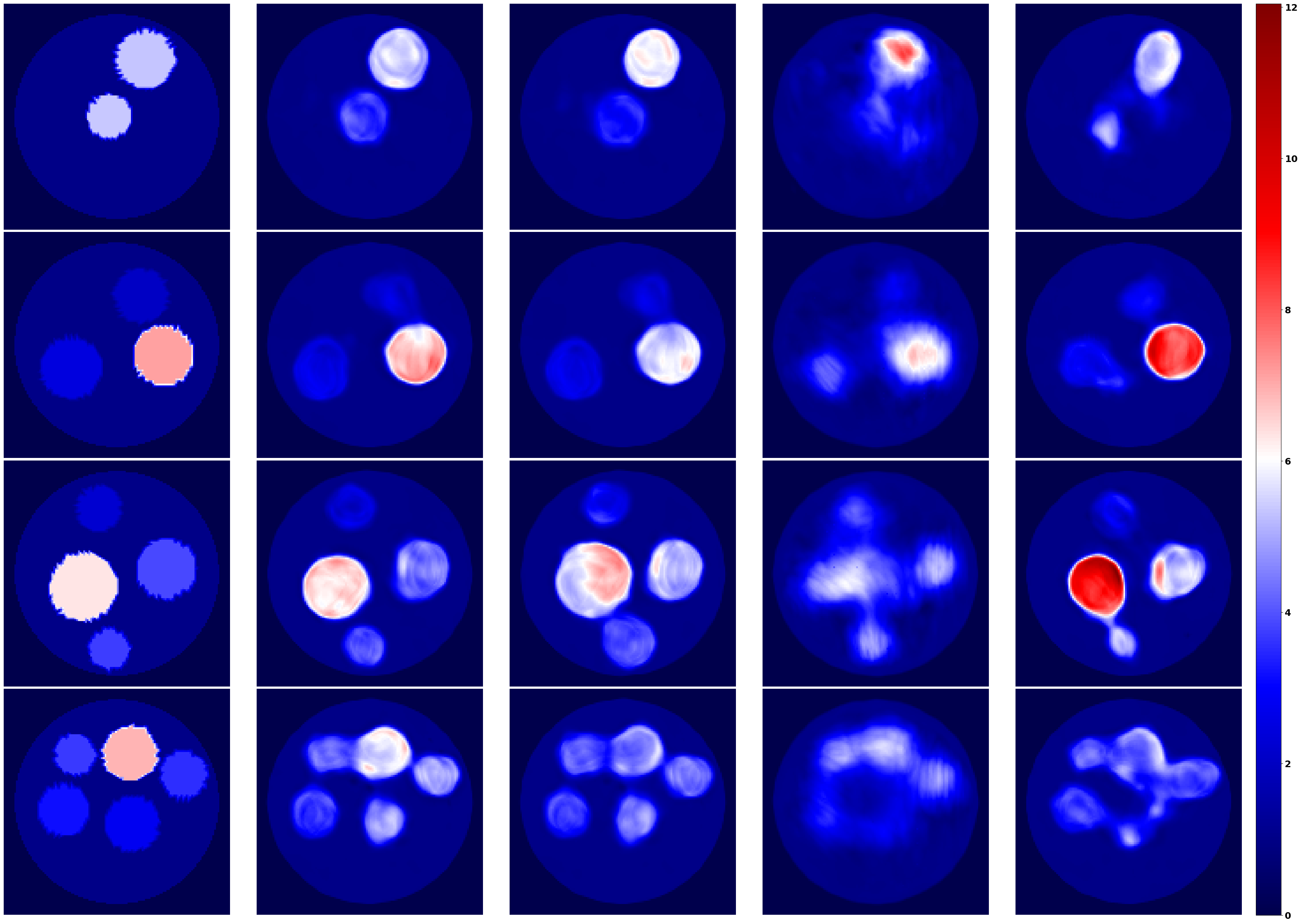}

\caption{
Reconstructions from the \emph{noiseless} \textbf{random disks} dataset. 
\textbf{Columns (left$\to$right):} (1) \emph{Ground truth} conductivity; (2) \emph{Best possible reconstruction} from the full-data solver (overall sampling rate $100\%$); (3) Reconstructions from the full-data solver applied to \emph{diffusion completion} of measurements (overall sampling rate $1\%$); (4) Reconstructions from the \emph{direct sparse-data solver} (overall sampling rate $1\%$); (5) Reconstructions from the full-data solver applied to \emph{H-matrix completion} of measurements (overall sampling rate $25.625\%$). \textbf{Rows (top$\to$bottom):} Examples with number of disks $n=2,3,4,5$.}
\label{fig:disk_deepsolver_noiseless}
\end{figure}

\begin{figure}[!htbp]
    \centering
    \includegraphics[width=\linewidth]{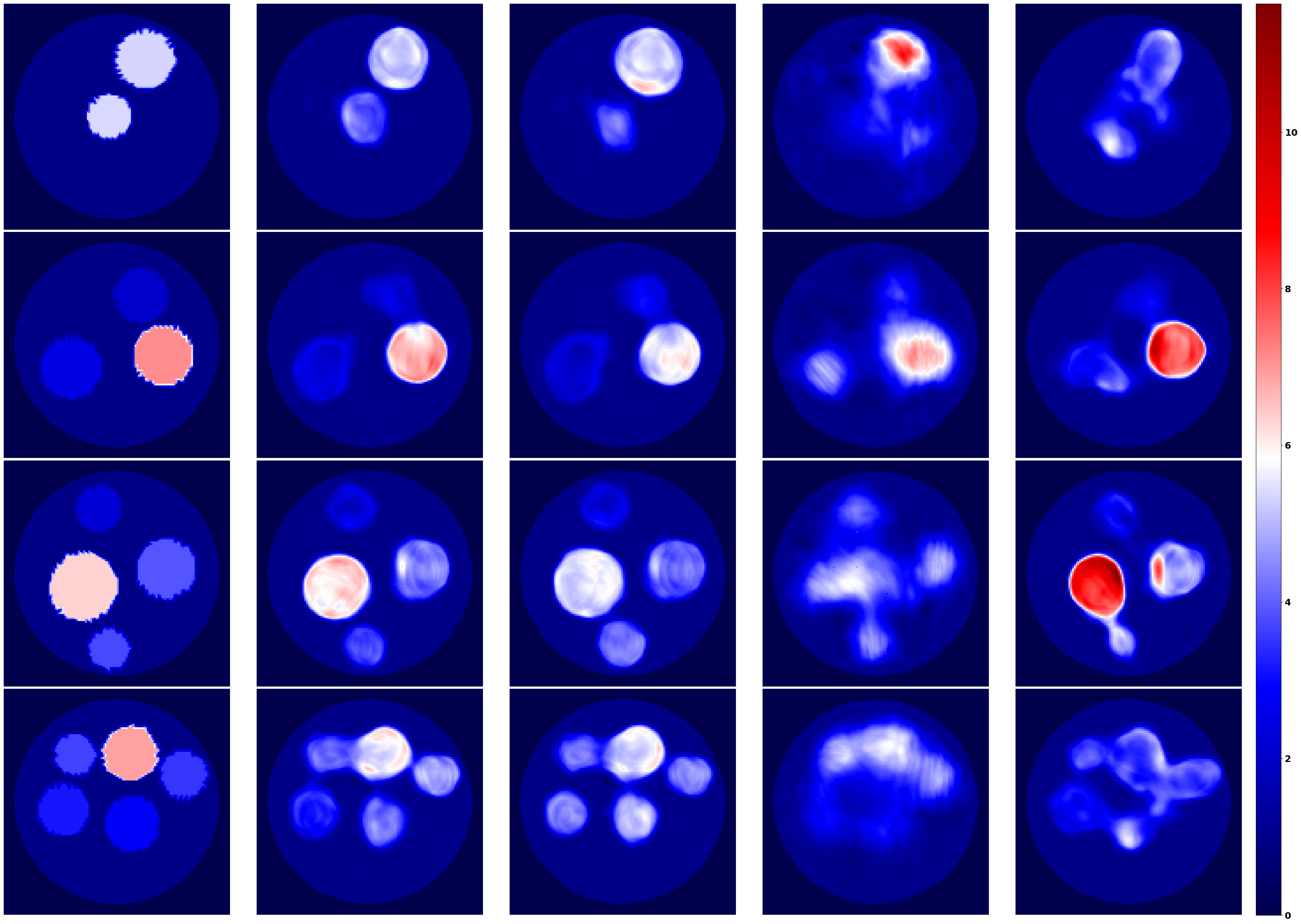}
\caption{
Reconstructions from the \textbf{random disks} dataset with \emph{$5\%$ noise}. 
\textbf{Columns (left$\to$right):} (1) \emph{Ground truth} conductivity; (2) \emph{Best possible reconstruction} from the full-data solver (overall sampling rate $100\%$); (3) Reconstructions from the full-data solver applied to \emph{diffusion completion} of measurements (overall sampling rate $1\%$); (4) Reconstructions from the \emph{direct sparse-data solver} (overall sampling rate $1\%$); (5) Reconstructions from the full-data solver applied to \emph{H-matrix completion} of measurements (overall sampling rate $25.625\%$). \textbf{Rows (top$\to$bottom):} Examples with number of disks $n=2,3,4,5$.}

\label{fig:disk_deepsolver_noise5e-2}
\end{figure}

\subsubsection{Random Shepp-Logan phantoms}\label{sec:SL}
The same experiments and comparisons are conducted for the Shepp--Logan dataset with visual reconstructions in Figs.~\ref{fig:sl_deepsolver_noiseless} and \ref{fig:sl_deepsolver_noise5e-2}, and errors shown in Table~\ref{tab:sl_recon_combined}. In the noiseless setting, diffusion completion + full-data solver produces reconstructions close to the best possible result, preserving the thin skull boundary and the intracranial cavities. Unlike the Disks dataset, the {direct sparse-data solver} performs better than {H-matrix completion} on Shepp--Logan: the {direct sparse-data solver} recovers the main structures but tends to smooth thin features, whereas {H-matrix completion + full-data solver} fails to recover both the skull boundary and the cavities. With \(5\%\) noise, robustness differences become more apparent. The {best possible reconstruction} and the {diffusion-completion} strategy both remain stable, while the {direct sparse-data solver} exhibits edge distortions at the boundary and amplified blur artifacts in the zoomed region. The {H-matrix completion + full-data solver} continues to fail as in the noiseless setting. Overall, diffusion completion narrows the gap to the best possible reconstruction across all metrics and maintains this advantage under noise, whereas the direct sparse-data solver and H-matrix completion struggle to solve the inverse problem.

\begin{table}[!htbp]
    \centering
    \caption{Reconstruction results for the \textbf{Shepp--Logan} dataset. 
    \textbf{First row:} best possible reconstruction using full data.
    \textbf{Last three rows:} various inverse problem solving strategies with sparse data. The overall sampling rate is indicated in the brackets, and the best results with sparse data are marked in bold.}
    \label{tab:sl_recon_combined}
    \renewcommand{\arraystretch}{1.25}
    \begin{tabular}{@{}lcccccc@{}}
        \toprule
        \multirow{2}{*}{Method} &
        \multicolumn{3}{c}{Noiseless DtN} &
        \multicolumn{3}{c}{5\% Noisy DtN} \\
        \cmidrule(lr){2-4}\cmidrule(lr){5-7}
        & SSIM$\uparrow$ & RE$\downarrow$ & MAE$\downarrow$ 
        & SSIM$\uparrow$ & RE$\downarrow$ & MAE$\downarrow$ \\
        \midrule
        full-data solver ($100\%$)                 
        & 0.930 & 8.2\% & 0.030 & 0.898 &10.2\% &0.042 \\
        \hline
        diffusion completion ($1\%$) + full-data solver & \textbf{0.910} & \textbf{10.3}\% & \textbf{0.036} & \textbf{0.842} & \textbf{14.3\%} & \textbf{0.055} \\

        direct sparse-data solver ($1\%$)                     
        &0.886 & 11.3\% & 0.043 & 0.776 & 16.4\% & 0.075 \\
        H-matrix completion ($25.625\%$) + full-data solver & 0.825 & 17.0\% & 0.067 & 0.788 & 19.5\% & 0.085\\
        \bottomrule
    \end{tabular}
\end{table}

\begin{figure}[!htbp]
    \centering
    \includegraphics[width=\linewidth]{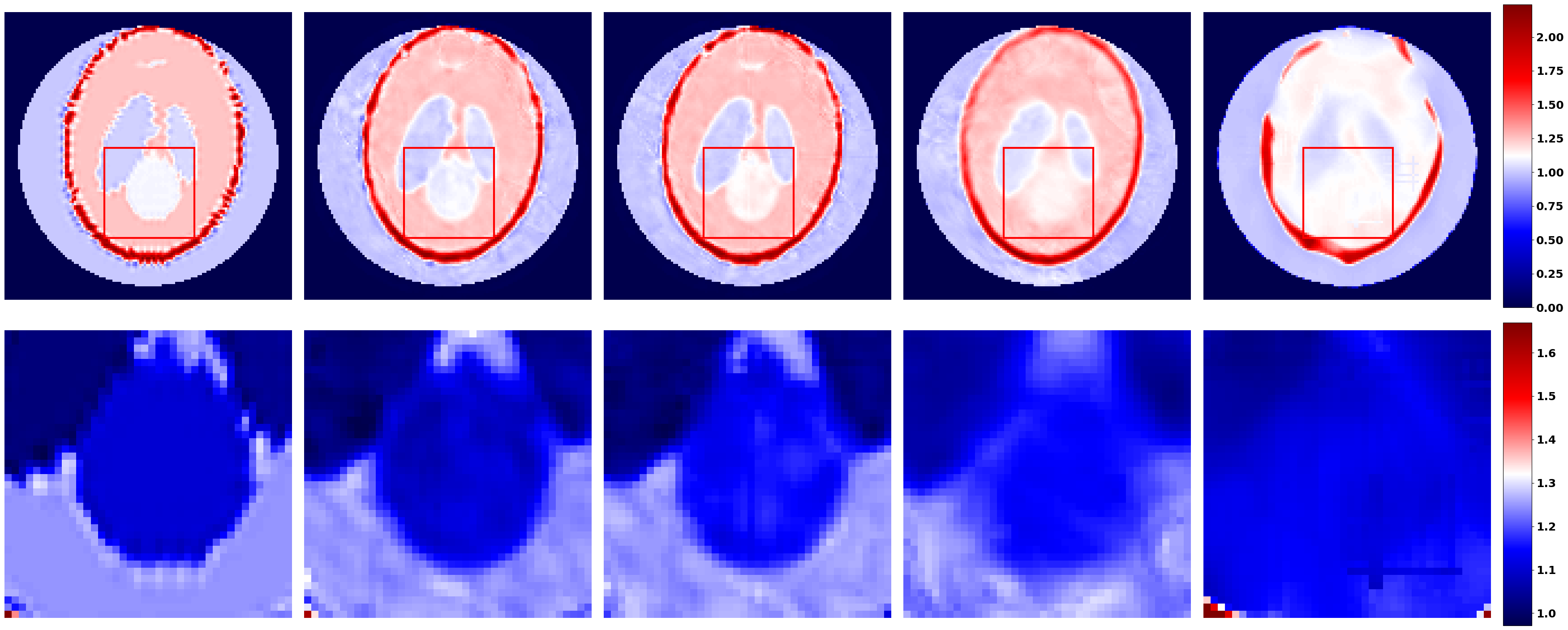}
\caption{
Reconstructions from the \emph{noiseless} \textbf{Shepp--Logan} dataset. 
\textbf{Columns (left$\to$right):} (1) \emph{Ground truth} conductivity; (2) \emph{Best possible reconstruction} from the full-data solver (overall sampling rate $100\%$); (3) Reconstructions from the full-data solver applied to \emph{diffusion completion} of measurements (overall sampling rate $1\%$); (4) Reconstructions from the \emph{direct sparse-data solver} (overall sampling rate $1\%$); (5) Reconstructions from the full-data solver applied to \emph{H-matrix completion} of measurements (overall sampling rate $25.625\%$). \textbf{Rows (top$\to$bottom):} full field of view (top) with a red box indicating the zoom region; zoomed-in crops of that region (bottom).
}

\label{fig:sl_deepsolver_noiseless}
\end{figure}

\begin{figure}[!htbp]
    \centering
    \includegraphics[width=\linewidth]{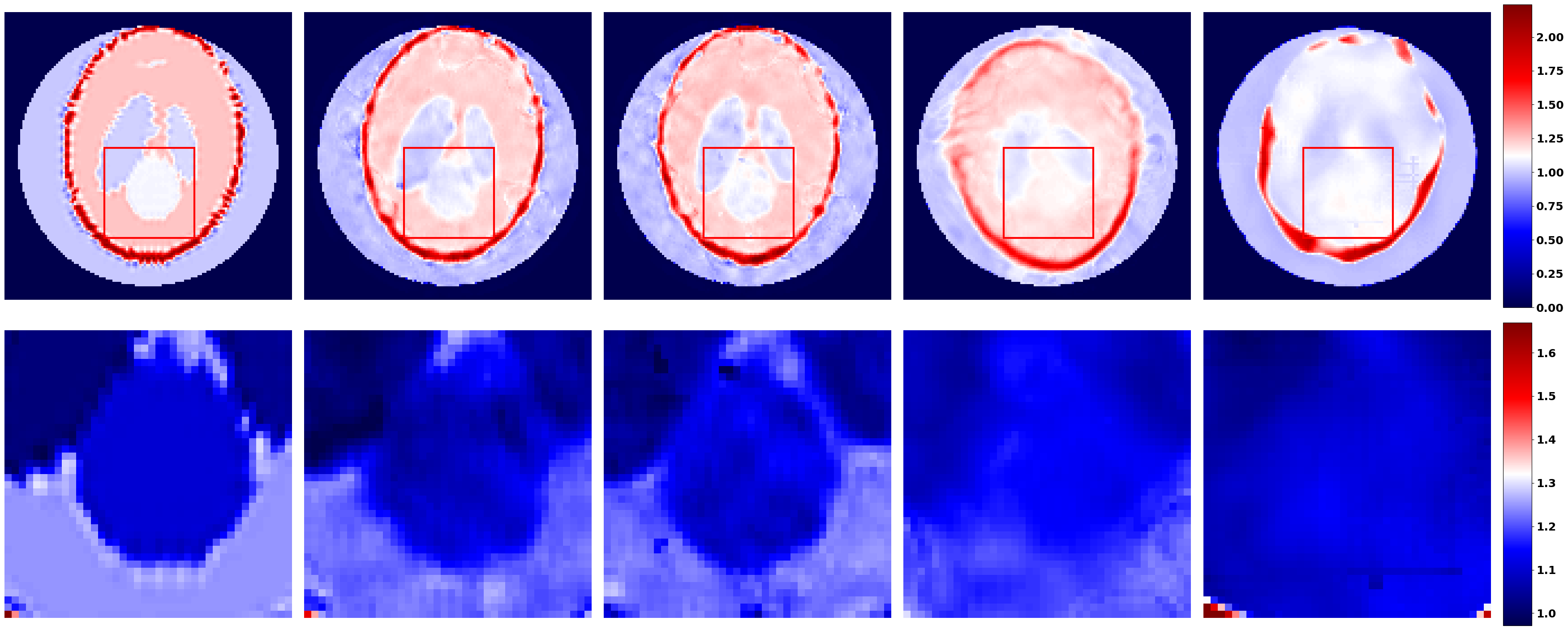}
\caption{
Reconstructions from the random \textbf{Shepp--Logan} dataset \emph{$5\%$ noise}. 
\textbf{Columns (left$\to$right):} (1) \emph{Ground truth} conductivity; (2) \emph{Best possible reconstruction} from the full-data solver (overall sampling rate $100\%$); (3) Reconstructions from the full-data solver applied to \emph{diffusion completion} of measurements (overall sampling rate $1\%$); (4) Reconstructions from the \emph{direct sparse-data solver} (overall sampling rate $1\%$); (5) Reconstructions from the full-data solver applied to \emph{H-matrix completion} of measurements (overall sampling rate $25.625\%$). \textbf{Rows (top$\to$bottom):} full field of view (top) with a red box indicating the zoom region; zoomed-in crops of that region (bottom).
}
    \label{fig:sl_deepsolver_noise5e-2}
\end{figure}

\section{Conclusion and Discussion}

We have proposed a diffusion-based DtN completion framework for EIT with extremely sparse boundary measurements and evaluated it on the Disks and Shepp--Logan datasets. Using the same inverse solver across all settings, reconstructions from diffusion completion are consistently closest to the best possible reconstruction that uses full measurements among the direct sparse-data solver and H-matrix completion. These results support our diffusion completion design that improves stability and accuracy without modifying the inverse solvers on-the-shelf.
Our analysis shows that the discretized DtN measurement set $F_N(K)$ inherits the low-dimensional
structure of the polygon conductivity class $K$. The intrinsic complexity of DtN measurements can be controlled via a
metric entropy bound determined by the underlying parameter dimension $2n_{\mathrm{v}}$, rather than by the ambient measurement dimension. Combining this complexity estimate with the conditional diffusion-model convergence theorem yields a
nonasymptotic end-to-end completion guarantee in our EIT setting, with an explicit dependence on the diffusion steps $T$ and vertices $n_{\mathrm{v}}$.

We next outline several promising directions for further development. First, although we considered simulated data in this paper, diffusion completion can be directly applied to real EIT application provided sufficient lab data are available for training. Second, encoding structure of the DtN matrix can improve generalization and data efficiency. In addition to hierarchical low-rankness \citep{bebendorf2003existence, ballani2017matrices}, DtN measurements in our setting are symmetric and satisfy zero row or column sum constraints \citep{somersalo1992existence}, which can be enforced by specific network architectural designs or training penalties. Third, accelerating sampling is important for practical deployment. Distillation \citep{song2023consistency} and few-step samplers \citep{song2020denoising} in diffusion models can be directly incorporated into our framework and reduce inference cost while maintaining accuracy on conditional tasks. Fourth, the generative measurement prior can guide the choice of boundary signals. Expected information gain provides a clear objective for selecting informative measurements, and modern Bayesian optimal experimental design methods make this feasible in EIT settings \citep{dong2025variational, hellmuth2025data}.

\section{Acknowledgements}
The authors were partially supported by the US National Science Foundation under awards DMS-2244988, CAIG RISE-2530596, IIS-2520978, and the DARPA D24AP00325-00.
   
\newpage
\bibliographystyle{plainnat}
\bibliography{ref}  

\newpage
\appendix
\section{Uniform Lipschitz boundary for convex admissible polygons}
\label{sec:lipschitz_bdy_class}
\begin{definition}[Lipschitz boundary class]
\label{def: lipschitz_bdy_class}
    Fix constants $r_0>0$ and $K_0> 1$.
We say that an open set $D\subset\R^2$ has \emph{Lipschitz boundary of class $(r_0,K_0)$} if for every point
$x_0\in\partial D$ there exist a rigid motion $\mathcal T_{x_0}$ (a translation followed by a rotation) with
$\mathcal T_{x_0}(x_0)=0$ and a $K_0$-Lipschitz function $\phi:\,[-r_0,r_0]\to\R$ such that
\[
\mathcal T_{x_0}(D)\cap\big([-r_0,r_0]\times[-K_0r_0,K_0r_0]\big)
=\{(x_1,x_2): x_1\in[-r_0,r_0],\ x_2>\phi(x_1)\}\,.
\]
\end{definition}

\begin{lemma}[Uniform Lipschitz boundary for convex admissible polygons]\label{lem:A4_from_A23_conv}
Let $P=P(v) \in \mathcal{A}$, $\partial P$ is of Lipschitz class $(r_0,K_0)$ with the \emph{uniform} choices
\[
r_0:=\frac{d_1}{4},
\qquad
K_0:=\cot\!\Big(\frac{\beta_0}{2}\Big).
\]
\end{lemma}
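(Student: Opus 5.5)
We first use convexity: since every interior angle of $P\in\mathcal A$ is at most $\pi-\beta_0<\pi$, $P$ is a convex polygon. We take $D=\operatorname{int}(P)$ and fix $x_0\in\partial P$. The plan is to split into two cases according to where $x_0$ lies: (a) $x_0$ is within distance $d_1/4$ of a vertex $v_i$, or (b) $x_0$ lies on a side $[v_i,v_{i+1}]$ at distance more than $d_1/4$ from both endpoints. In each case we choose the rigid motion $\mathcal T_{x_0}$, write $\mathcal T_{x_0}(\partial P)$ near the origin as a graph $x_2=\phi(x_1)$, and check that the box $Q:=[-r_0,r_0]\times[-K_0r_0,K_0r_0]$ meets $\mathcal T_{x_0}(D)$ exactly in the epigraph of $\phi$.

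In case (b), we place $x_0$ at the origin with the side along the $x_1$-axis and the interior normal pointing in the $+x_2$ direction. Since $|x_0-v_i|,|x_0-v_{i+1}|>d_1/4=r_0$, the side covers all of $[-r_0,r_0]$, so on this interval $\phi\equiv0$, which is trivially $K_0$-Lipschitz. Convexity places $P$ in the half-plane $x_2\ge 0$.

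In case (a), with nearest vertex $v_i$ and $|x_0-v_i|\le d_1/4$, we choose the $x_2$-axis along the interior bisector of the angle $\beta_i$ at $v_i$, translated to $x_0$. In these coordinates the two incident sides make angle $\beta_i/2\ge\beta_0/2$ with the $x_2$-axis. Their inclinations relative to the $x_1$-axis are therefore at most $\pi/2-\beta_0/2$, so both sides are graphs over $x_1$ with slopes bounded by $\tan(\pi/2-\beta_0/2)=\cot(\beta_0/2)=K_0$. This is where the choice of $K_0$ comes from. Side lengths are at least $d_1$ and $|x_0-v_i|\le d_1/4$, so within horizontal distance $r_0$ of $x_0$ the boundary contains only the two incident sides: their far endpoints lie at distance at least $d_1-d_1/4-d_1/4>r_0$ away. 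Convexity makes $\phi$ the lower convex envelope formed by the two sides, with $|\phi'|\le K_0$ almost everywhere, and $D$ is its strict epigraph locally.

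The step we expect to be the main obstacle is checking that the box intersection equals the epigraph with no extra pieces, that is, that the top edge $x_2=K_0r_0$ of $Q$ does not reach other parts of $\partial P$, including the opposite side. For this we would use $\phi(0)=0$ and $|\phi|\le K_0 r_0$ on $[-r_0,r_0]$, together with a lower bound on the width of $P$ in the $x_2$ direction. That lower bound should follow from the side length $d_1$ and the angle bound through elementary trigonometry, but it may force a smaller $r_0$. If $d_1/4$ turns out to be insufficient, we would instead restrict to $r_0=\min\{d_1/4,\ c\,d_1\sin\beta_0\}$. In the $x_1$ direction the graph description is already exact, because convexity guarantees each vertical line meets $\partial P$ in at most two points.
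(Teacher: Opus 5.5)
Your plan is essentially the paper's: a side-aligned frame with $\phi\equiv0$ away from vertices, a bisector frame near a vertex, and $K_0=\cot(\beta_0/2)$ coming from the half-angle. Your split by distance to the nearest vertex is more careful than the paper's Case~1, which wrongly asserts that $B_2(x_0,d_1/4)$ contains no vertex whenever $x_0$ is interior to an edge. The real problem is that the step you flag as the main obstacle cannot be done: with these constants the lemma is false. Read the right-hand side of the definition as intersected with the box $Q=[-r_0,r_0]\times[-K_0r_0,K_0r_0]$; as typed it is unbounded, so the identity could never hold. Since $0=\mathcal T_{x_0}(x_0)\in\partial\,\mathcal T_{x_0}(D)$ and $D$ is open, $\phi(0)=0$: if $\phi(0)>0$ a neighbourhood of $0$ misses $\mathcal T_{x_0}(D)$, and if $\phi(0)<0$ one lies inside it. Hence $(0,K_0r_0)$ belongs to the right-hand side, so $D$ contains a point at distance $K_0r_0=\cot(\beta_0/2)\,d_1/4$ from $x_0$. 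Now take $n_{\mathrm v}=3$, $\beta_0<2\arctan(1/4)$, and $P$ the equilateral triangle of side $d_1$ centred at $0$ with $d_1/\sqrt3\le 1-d_0$. Then $P\in\mathcal A$, yet $K_0r_0>d_1=\diam(P)$. This is a contradiction for every $x_0$, every rigid motion and every $\phi$, so $r_0$ must shrink with $\beta_0$ and the statement itself has to change.

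Your fallback $r_0=\min\{d_1/4,\,c\,d_1\sin\beta_0\}$ is still too large if $c$ is absolute. In your case~(b) frame you need $[-r_0,r_0]\times(0,K_0r_0]\subset\mathcal T_{x_0}(D)$. For the isosceles triangle with base angles $\beta_0\le\pi/3$ and legs $d_1$ (admissible for small $d_1$), the thickness above the midpoint of the base is $d_1\sin\beta_0$. This forces $r_0<d_1\sin\beta_0\tan(\beta_0/2)$, i.e.\ $c<\tan(\beta_0/2)$.

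Two further steps fail independently of the box height:
\begin{itemize}
\item In case~(b), $|x_0-v_i|>r_0$ only places the column $x_1=-r_0$ at a possibly tiny distance $s=|x_0-v_i|-r_0$ from $v_i$. There the thickness of $P$ is at most $s\tan\beta_i$ when $\beta_i<\pi/2$. This already breaks the side-aligned frame for an equilateral triangle with $\beta_0=\pi/3$, so the threshold between your cases must exceed $r_0$ by a $\beta_0$-dependent factor.
\item In case~(a), knowing the far endpoints are at Euclidean distance $>r_0$ from $x_0$ does not keep them, or the exterior of $P$, out of the tall strip $|x_1|\le r_0$. In the bisector frame the incident sides have horizontal extent only $|v_{i\pm1}-v_i|\sin(\beta_i/2)$. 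Meanwhile $\phi(x_1)\le K_0|x_1|<K_0r_0$ makes the right-hand side nonempty in every column $|x_1|<r_0$. The bisector frame is forced at a vertex with $\beta_i=\beta_0$, and at a base vertex of the triangle above $P$ extends only $d_1\sin(\beta_0/2)$ to one side. So you need $r_0\lesssim d_1\sin(\beta_0/2)$ there.
\end{itemize}

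The paper's proof works only with balls and ``a small neighbourhood'' and never checks the box identity, so it does not close this gap either. What can be proved along your lines is a version with a smaller, $\beta_0$-dependent $r_0$ and case threshold. That is all the later arguments (e.g.\ the constant $C_H$ in Lemma~\ref{lem:L1_to_H_poly}) actually use.
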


\begin{proof}
Fix $x_0\in\partial P$. We distinguish two cases.

\emph{Case 1: $x_0$ lies in the relative interior of an edge.}
Let $e=[v_i,v_{i+1}]$ be that edge. Since $|v_{i+1}-v_i|\ge d_1$, the ball
$B_2(x_0,r_0)$ with $r_0=d_1/4$ intersects $\partial P$ only along the segment of $e$
(no vertex is contained in the ball).
After a rigid motion sending $x_0$ to $0$ and rotating so that $e$ becomes $\{x_2=0\}$,
the convexity implies that in a small neighborhood $P$ coincides with either $\{x_2>0\}$
or $\{x_2<0\}$, hence the Lipschitz representation holds with Lipschitz constant $0$.

\emph{Case 2: $x_0$ is a vertex, say $x_0=v_i$.}
Translate $v_i$ to $0$. Let the two incident edges be the rays along the lines
$\ell_-$ and $\ell_+$ meeting at interior angle $\beta_i\in[\beta_0,\pi)$.
Rotate coordinates so that the bisector of the interior angle is the $x_2$-axis.
Then each line $\ell_\pm$ makes an angle $\beta_i/2$ with the $x_2$-axis, hence an angle
$\pi/2-\beta_i/2$ with the $x_1$-axis. Therefore each $\ell_\pm$ is the graph of an affine function
$x_2=\phi_\pm(x_1)$ with slope bounded by
\[
|\phi_\pm'(x_1)|=\tan\!\Big(\frac{\pi}{2}-\frac{\beta_i}{2}\Big)=\cot\!\Big(\frac{\beta_i}{2}\Big)
\le \cot\!\Big(\frac{\beta_0}{2}\Big)=K_0.
\]
Since $|v_{i\pm 1}-v_i|\ge d_1$, the ball $B_2(0,r_0)$ with $r_0=d_1/4$ intersects $\partial P$
only along these two edges. In the above coordinates, $P\cap B_2(0,r_0)$ coincides with the epigraph
$\{x_2>\phi(x_1)\}$ where $\phi:=\max\{\phi_-,\phi_+\}$, and $\phi$ is $K_0$-Lipschitz.

Combining the two cases gives that $\partial P$ is of Lipschitz class $(r_0,K_0)$ with the stated
uniform constants.
\end{proof}

\section{Proof for Lemma~\ref{lem:rho_s}}
\label{sec:proof_rho_s}
We first show a useful fact that perturbing vertices by $\delta$ leads to at most $\delta$ perturbation around the polygonal boundary in Hausdorff distance.

\begin{lemma}[Boundary perturbation under vertex perturbations]\label{lem:dist-boundary-perturb}
Let $v,w\in \mathcal{V}^{n_{\mathrm v}}$ be counterclockwise ordered vertex, then
\[
d_H\big(\partial P(v),\partial P(w)\big)\ \le\ \|v-w\|_{\mathrm{poly}}.
\]
Moreover, if $P(v)\subset\Omega$ and $P(w)\subset\Omega$, then
$\dist(P(u),\partial\Omega)=\dist(\partial P(u),\partial\Omega)$ for $u=v,w$\,, and
\[
\dist\big(P(v),\partial\Omega\big)\ \ge\ \dist\big(P(w),\partial\Omega\big)\ -\ \|v-w\|_{\mathrm{poly}}.
\]
\end{lemma}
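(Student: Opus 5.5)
The plan is to prove the Hausdorff bound first, by matching edges through their shared index, and then derive the distance statements from it.

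\emph{Hausdorff bound.} Set $\delta\coloneqq\|v-w\|_{\mathrm{poly}}$. Take any point $x\in\partial P(v)$. It lies on some edge $[v_i,v_{i+1}]$, so $x=(1-t)v_i+tv_{i+1}$ for some $t\in[0,1]$. Pair it with the point $y\coloneqq(1-t)w_i+tw_{i+1}$, which lies on $[w_i,w_{i+1}]\subset\partial P(w)$. By the triangle inequality,
\[
|x-y|\le (1-t)|v_i-w_i|+t|v_{i+1}-w_{i+1}|\le \delta .
\]
Hence $\sup_{x\in\partial P(v)}\inf_{y\in\partial P(w)}\dist(x,y)\le\delta$. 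Swapping the roles of $v$ and $w$ gives the other half of the Hausdorff distance, so $d_H\le\delta$. The only care needed is the index convention $v_{n_{\mathrm v}}=v_0$ and $w_{n_{\mathrm v}}=w_0$, which makes the pairing of edges well defined. Since $\|\cdot\|_{\mathrm{poly}}$ is a maximum over vertex indices, no further constants appear.

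\emph{Equality of the two distances.} Let $P=P(u)$ be a compact subset of the open disk $\Omega$. The function $x\mapsto\dist(x,\partial\Omega)=1-|x|$ is continuous, so its minimum over $P$ is attained at some point $x^\ast$.
\begin{itemize}
\item Suppose $x^\ast\in\operatorname{int}(P)$. The function $1-|x|$ has no local minimum at an interior point unless $x^\ast=0$, since moving radially outward strictly decreases it.
\item If $x^\ast=0$, then $P$ contains a boundary point farther from $0$, which gives a smaller value of $1-|x|$.
\item In either case the minimum is attained on $\partial P$.
\end{itemize}
This yields $\dist(P,\partial\Omega)\ge\dist(\partial P,\partial\Omega)$. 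The reverse inequality holds trivially because $\partial P\subset P$. An alternative route is to argue that any segment from an interior point to $\partial\Omega$ must cross $\partial P$, since $P$ is compact in $\Omega$; this avoids the radial computation.

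\emph{The final inequality.} Pick $x\in\partial P(v)$ attaining $\dist(\partial P(v),\partial\Omega)$. By the Hausdorff bound there is $y\in\partial P(w)$ with $|x-y|\le\delta$. Since $\dist(\cdot,\partial\Omega)$ is $1$-Lipschitz,
\[
\dist(P(v),\partial\Omega)=\dist(x,\partial\Omega)\ge\dist(y,\partial\Omega)-\delta\ge\dist(P(w),\partial\Omega)-\delta .
\]

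The main obstacle is the middle step, namely reducing the distance from the solid polygon to the distance from its boundary. I would handle it with the segment-crossing argument, which uses only compactness of $P$ inside $\Omega$ and the Jordan-curve structure.
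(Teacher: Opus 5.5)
Your proposal is correct and follows essentially the paper's proof in all three steps. You use the same index-wise pairing $(1-t)v_i+tv_{i+1}\leftrightarrow(1-t)w_i+tw_{i+1}$ for the Hausdorff bound, the same exclusion of an interior minimizer by moving toward $\partial\Omega$ (your radial step is the paper's ``move toward a nearest boundary point'' specialized to the unit disk), and the triangle inequality, i.e.\ the $1$-Lipschitz property of $\dist(\cdot,\partial\Omega)$, for the final estimate; the only difference there is that you exhibit an explicit matched point $y$ rather than citing $\dist(A,B)\ge\dist(A',B)-d_H(A,A')$.
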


\begin{proof}
Fix $i$ and consider the segments $e_i(v)=[v_i,v_{i+1}]$ and $e_i(w)=[w_i,w_{i+1}]$.
For any $t\in[0,1]$, set $x(t)=(1-t)v_i+t v_{i+1}\in e_i(v)$ and $y(t)=(1-t)w_i+t w_{i+1}\in e_i(w)$.
Then
\[
|x(t)-y(t)|\le (1-t)|v_i-w_i|+t|v_{i+1}-w_{i+1}|\le \|v-w\|_{\mathrm{poly}}.
\]
Consequently, we have
\[
\dist\!\big(x(t),e_i(w)\big)\le \|v-w\|_{\mathrm{poly}}. 
\]
Taking the supremum over $t\in[0,1]$ yields
\begin{equation}\label{eqn:sup_x_e_i}
\sup_{x\in e_i(v)}\dist(x,e_i(w))\le \|v-w\|_{\mathrm{poly}}\,.
\end{equation}
Consider $x\in\partial P(v)$, then $x\in e_i(v)$ for some $i$, and since $e_i(w)\subset\partial P(w)$ we have
\[
\dist(x,\partial P(w))\le \dist(x,e_i(w))\le \|v-w\|_{\mathrm{poly}}\,,
\]
where the last inequality results from \Eqref{eqn:sup_x_e_i}.
Taking the supremum over $x\in\partial P(v)$ gives
$\sup_{x\in\partial P(v)}\dist(x,\partial P(w))\le \|v-w\|_{\mathrm{poly}}$.
Since $\partial P(v)$ and $\partial P(w)$ are unions of the corresponding edges, we also obtain
$\sup_{x\in\partial P(v)}\dist(x,\partial P(w))\le \|v-w\|_{\mathrm{poly}}$.
By symmetry (exchanging $v$ and $w$) we also have
$\sup_{y\in\partial P(w)}\dist(y,\partial P(v))\le \|v-w\|_{\mathrm{poly}}$, proving the Hausdorff estimate. 

For $P(v)\subset\Omega$, since $P(v)$ is compact and $\partial\Omega$ is closed, 
any point $x\in P(v)$ minimizing $\dist(x,\partial\Omega)$ must lie on $\partial P(v)$:
if $x$ were in the interior of $P(v)$, one could move slightly toward a nearest boundary point of $\partial\Omega$
while staying inside $P(v)$ and decrease the distance, a contradiction. Hence
$\dist(P(v),\partial\Omega)=\dist(\partial P(v),\partial\Omega)$.

For the distance inequality, note that if $d_H(A,A')\le \delta$ for nonempty closed sets, then
$\dist(A,B)\ge \dist(A',B)-\delta$ for any closed $B$, by the triangle inequality in $\R^2$. 
\end{proof}

Now we can prove Lemma ~\ref{lem:rho_s} as follows. 
\begin{proof}[Proof of Lemma ~\ref{lem:rho_s}]
Write $e_i^\ast\coloneqq [v_i^\ast,v_{i+1}^\ast]$ for the edges of $\partial P^\ast$.
Since $P^\ast$ is convex, $\partial P^\ast$ is simple, and every pair of nonadjacent edges is disjoint.
Hence
\[
\eta^\ast\coloneqq \tfrac12\min\big\{\dist(e_i^\ast,e_j^\ast):\ e_i^\ast,e_j^\ast\ \text{nonadjacent}\big\}>0.
\]
Moreover, since $P^\ast\in\mathcal A_{1/2}$, we have the strict margins
\[
m_0\coloneqq \dist(P^\ast,\partial\Omega)-\tfrac12 d_0>0,\qquad
m_1\coloneqq \min_i |v_{i+1}^\ast-v_i^\ast|-\tfrac12 d_1>0,
\qquad
\beta_s\coloneqq \min_i\Big\{\beta_i(v^\ast)-\tfrac12\beta_0,\ \pi-\tfrac12\beta_0-\beta_i(v^\ast)\Big\}>0.
\]

For any $i$,
\[
\big||v_{i+1}-v_i|-|v_{i+1}^\ast-v_i^\ast|\big|
\le |(v_{i+1}-v_{i+1}^\ast)-(v_i-v_i^\ast)|
\le 2\|v-v^\ast\|_{\mathrm{poly}}.
\]
Thus if $\|v-v^\ast\|_{\mathrm{poly}}<m_1/4$, then $|v_{i+1}-v_i|>d_1/2$ for all $i$, i.e.\ \textup{(A2$'$)} holds.

For each edge $e_i(v)=[v_i,v_{i+1}]$, the same interpolation argument as in Lemma~\ref{lem:dist-boundary-perturb}
gives $d_H(e_i(v),e_i^\ast)\le \|v-v^\ast\|_{\mathrm{poly}}$.
Hence for any nonadjacent edge index pair $(i,j)$,
\[
\dist(e_i(v),e_j(v))
\ \ge\ \dist(e_i^\ast,e_j^\ast)-d_H(e_i(v),e_i^\ast)-d_H(e_j(v),e_j^\ast)
\ \ge\ 2\eta^\ast-2\|v-v^\ast\|_{\mathrm{poly}}.
\]
If $\|v-v^\ast\|_{\mathrm{poly}}<\eta^\ast$, then $\dist(e_i(v),e_j(v))>0$ for all nonadjacent pairs,
so nonadjacent edges cannot intersect. Moreover, once we also enforce the angle lower bound $\beta_i(v)>\beta_0/2$ for all $i$ (proved below),
adjacent edges cannot overlap. Hence $\partial P(v)$ is a simple closed polygonal curve.

Now we show the inclusion in $\Omega$ and the boundary margin.
Since $P^\ast\subset\Omega$ and $\dist(P^\ast,\partial\Omega)>d_0/2$, every vertex satisfies
$\dist(v_i^\ast,\partial\Omega)\ge \dist(P^\ast,\partial\Omega)>0$.
Set
\[
\eta_\Omega^\ast\coloneqq \tfrac12\,\dist(P^\ast,\partial\Omega)>0.
\]
If $\|v-v^\ast\|_{\mathrm{poly}}<\eta_\Omega^\ast$, then $|v_i-v_i^\ast|<\eta_\Omega^\ast$ for all $i$,
hence $v_i\in\Omega$ for all $i$. As $\Omega$ is convex, each edge $[v_i,v_{i+1}]\subset\Omega$ and therefore
$P(v)\subset\Omega$. Furthermore, by Lemma~\ref{lem:dist-boundary-perturb},
\[
\dist(P(v),\partial\Omega)\ge \dist(P^\ast,\partial\Omega)-\|v-v^\ast\|_{\mathrm{poly}}.
\]
Thus if $\|v-v^\ast\|_{\mathrm{poly}}<m_0$, then $\dist(P(v),\partial\Omega)>d_0/2$, i.e.\ \textup{(A1$'$)} holds.

We now consider the angle bounds and convexity in (A3$'$). On the set where \textup{(A2$'$)} holds, the interior angles $\beta_i(v)$ are continuous functions of $v$
(e.g.\ they can be expressed via dot/cross products of the adjacent edge vectors).
Hence there exists $\rho_{\mathrm{ang}}>0$ such that if $\|v-v^\ast\|_{\mathrm{poly}}<\rho_{\mathrm{ang}}$ then
$|\beta_i(v)-\beta_i(v^\ast)|<\beta_s/2$ for all $i$, and consequently
$\beta_0/2<\beta_i(v)<\pi-\beta_0/2$ for all $i$, which yields \textup{(A3$'$)}.

Finally, define
\[
\rho_{P^\ast}\coloneqq \min\Big\{\eta^\ast,\ \eta_\Omega^\ast,\ m_0,\ m_1/4,\ \rho_{\mathrm{ang}}\Big\}.
\]
Then $\|v-v^\ast\|_{\mathrm{poly}}<\rho_{P^\ast}$ implies \textup{(A1$'$)--(A3$'$)}.
\end{proof}

\section{Proof of Lemma~\ref{lem:chart-M}}
\label{sec:chart-M}

\begin{lemma}\label{lem:L1_to_H_poly}
There exists a constant $C_H>0$, depending only on the a priori data, such that for all
$P,Q\in\mathcal A_{1/2}$,
\[
d_H(\partial P,\partial Q)\ \le\ C_H\|\gamma_P-\gamma_Q\|_{L^1(\Omega)}^{1/2}.
\]
\end{lemma}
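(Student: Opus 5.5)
We bound $d_H(\partial P,\partial Q)$ by $\|\gamma_P-\gamma_Q\|_{L^1(\Omega)}=|\kappa-1|\,|P\Delta Q|$ via a ``fat wedge'' argument: a point of one boundary far from the other boundary produces a region of definite area in the symmetric difference. Set $\delta\coloneqq d_H(\partial P,\partial Q)$. We may assume $\delta>0$ and, without loss of generality, that the supremum is attained at some $x_0\in\partial P$ with $\dist(x_0,\partial Q)=\delta$. This is possible because $\partial P,\partial Q$ are compact, and the other case follows by symmetry. The open disk $B\coloneqq B_2(x_0,\delta)$ (open ball) then does not meet $\partial Q$. Since $B$ is connected, it lies either entirely in $\operatorname{int}Q$ or entirely in $\R^2\setminus Q$.

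\emph{Step 1 (area of $P$ or its complement near a boundary point).} We show there is a constant $c_0>0$, depending only on the a priori data, such that for every $P\in\mathcal A_{1/2}$, every $x_0\in\partial P$ and every $r\le r_1$,
\[
|P\cap B_2(x_0,r)|\ \ge\ c_0r^2,\qquad |B_2(x_0,r)\setminus P|\ \ge\ c_0 r^2 .
\]
We take $r_1\coloneqq d_1/8$. The relaxed class has side lengths $>d_1/2$ and angles in $(\beta_0/2,\pi-\beta_0/2)$, so within radius $r_1$ of $x_0$ the set $P$ coincides with either a half-plane or a convex wedge with vertex $x_0$ and opening at least $\beta_0/2$. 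This is the same local picture as in Lemma~\ref{lem:A4_from_A23_conv}, now with halved parameters. The wedge gives $|P\cap B_2(x_0,r)|\ge \tfrac{\beta_0}{4}r^2$. The complement contains the exterior wedge, of opening $2\pi-\beta_i\ge\pi$, which gives $|B_2(x_0,r)\setminus P|\ge \tfrac{\pi}{2}r^2$. Convexity of polygons in $\mathcal A_{1/2}$ is used here; it follows from the interior angles being $<\pi$, exactly as in the remark after Definition~\ref{def:admiss}. We may therefore take $c_0=\beta_0/4$.

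\emph{Step 2 (small $\delta$).} Suppose $\delta\le r_1$. If $B\subset \operatorname{int} Q$, then $B\setminus P\subset Q\setminus P$, and Step 1 gives $|P\Delta Q|\ge c_0\delta^2$. If instead $B\cap Q=\emptyset$, then $P\cap B\subset P\setminus Q$, with the same bound. Hence
\[
\|\gamma_P-\gamma_Q\|_{L^1}\ \ge\ |\kappa-1|\,c_0\,\delta^2 .
\]
This yields the claim with $C_H=(|\kappa-1|c_0)^{-1/2}$. Here $\kappa\neq1$ is implicit, since otherwise $\gamma_P\equiv1$ and the statement is void.

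\emph{Step 3 (large $\delta$).} Suppose $\delta>r_1$. Applying the Step 2 argument with radius $r_1$ in place of $\delta$ gives $|P\Delta Q|\ge c_0r_1^2$. Moreover $\delta\le \diam(\Omega)=2$, because both boundaries lie in $\Omega$. Therefore
\[
\delta\le 2\le \frac{2}{r_1\sqrt{c_0}}\,|P\Delta Q|^{1/2},
\]
which again gives the claim with an adjusted constant. Taking the maximum of the two constants completes the proof.

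The main obstacle is the uniformity in Step 1: the local wedge structure must hold with radius $r_1$ and opening bounded below uniformly over $\mathcal A_{1/2}$. The point to check is that no other edge enters $B_2(x_0,r_1)$. For convex polygons this follows from the side-length bound: a nonadjacent edge would have to come within $r_1$ of an edge near $x_0$, which convexity together with the angle bounds excludes. This is the same reasoning as in Lemma~\ref{lem:A4_from_A23_conv}, and I would reuse it essentially verbatim.
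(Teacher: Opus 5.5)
Your overall route is the paper's: a point $x_0\in\partial P$ with $\dist(x_0,\partial Q)=\delta$, an open ball around it on which $\chi_Q$ is constant, two-sided area lower bounds for $P$ near $x_0$, and a split into small and large $\delta$. The paper obtains its two regions as the truncated cones $C^{\pm}(h)$ of area $h^2/K_0$, using the Lipschitz representation of Lemma~\ref{lem:A4_from_A23_conv}. Your Steps 2 and 3 are correct.

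The gap is in Step 1, exactly at the point you flag as the main obstacle. You claim that inside $B_2(x_0,d_1/8)$ the polygon $P$ coincides with a half-plane or a wedge with vertex $x_0$, and this is false in two ways. First, if $x_0$ is an edge point within $d_1/8$ of a vertex, the other edge at that vertex enters the ball. Second, and more seriously, convexity and the angle bounds do not keep nonadjacent edges away. Take a rhombus with side slightly above $d_1/2$ and acute angle slightly above $\beta_0/2$. It belongs to $\mathcal A_{1/2}$, provided it fits in $\Omega$ with the required margin. Its opposite sides are parallel at distance about $\frac{d_1}{2}\sin\frac{\beta_0}{2}$, which is smaller than $d_1/8$ once $\sin\frac{\beta_0}{2}<\frac14$. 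Hence the ball of radius $d_1/8$ at an obtuse vertex meets a side not incident to that vertex, and the full sector you use is not contained in $P$. Reusing Lemma~\ref{lem:A4_from_A23_conv} verbatim does not close this gap. Its proof rests on the same assertion (no other vertex or edge within distance $d_1/4$), and it is stated for $\mathcal A$ rather than $\mathcal A_{1/2}$.

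The conclusion of Step 1 is nevertheless true, and it needs only convexity, not the local picture.
\begin{itemize}
\item \emph{Complement.} The open half-disk beyond a supporting line of $P$ at $x_0$ gives $|B_2(x_0,r)\setminus P|\ge\frac{\pi}{2}r^2$ for every $r>0$.
\item \emph{Setup for $P$.} Let $x_0\in[v_i,v_{i+1}]$ and suppose $|v_{i+1}-x_0|\ge\frac12|v_{i+1}-v_i|>\frac{d_1}{4}$; otherwise argue with $v_i$ and $v_{i-1}$.
\item \emph{A triangle in $P$.} The triangle $T=\conv\{x_0,v_{i+1},v_{i+2}\}$ lies in $P$ and has angle $\beta_{i+1}(v)$ at $v_{i+1}$. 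So $v_{i+2}$ is at distance $|v_{i+2}-v_{i+1}|\sin\beta_{i+1}(v)>\frac{d_1}{2}\sin\frac{\beta_0}{2}$ from the line through $x_0$ and $v_{i+1}$.
\item \emph{Angle at $x_0$.} Since $|v_{i+2}-x_0|\le 2$, the angle $\theta$ of $T$ at $x_0$ satisfies $\sin\theta\ge\frac{d_1}{4}\sin\frac{\beta_0}{2}$.
\item \emph{Area bound.} Set $r_1\coloneqq\frac{d_1}{4}\sin\frac{\beta_0}{2}$. For $r\le r_1$, both sides of $T$ at $x_0$ have length at least $r$. So the triangle with vertex $x_0$ and legs of length $r$ along them lies in $P\cap B_2(x_0,r)$ and has area $\frac12r^2\sin\theta$.
\end{itemize}
This gives Step 1 with $c_0=\frac{d_1}{8}\sin\frac{\beta_0}{2}$, depending only on the a priori data. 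Your Steps 2 and 3 then go through unchanged.
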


\begin{proof}[Proof of Lemma ~\ref{lem:L1_to_H_poly}]
Let $\delta:=d_H(\partial P,\partial Q)$. It suffices to prove the case when $\delta>0$. Without loss of generality, we can choose $x\in\partial P$ such that $\dist(x,\partial Q)=\delta$ since $\partial P$ is compact. Consider open ball $\tilde{B}_2(x,\delta)$, $\tilde{B}_2(x,\delta)\cap\partial Q=\varnothing$, hence $\chi_Q$ is constant on $\tilde{B}_2(x,\delta)$.

Since $\partial P$ is Lipschitz of class $(r_0,K_0)$, at $x$ there exist local coordinates in which
$P$ coincides with $\{(x_1,x_2): x_2>\phi(x_1)\}$, with $\phi$ $K_0$-Lipschitz.
In these coordinates, for any $h\le \min\{r_0,\delta/2\}$ the truncated cone
\[
C^+(h):=\{(x_1,x_2): 0<x_2<h,\ |x_1|<x_2/K_0\}
\]
is contained in $P\cap \tilde{B}_2(x,\delta)$, and the reflected cone
\[
C^-(h):=\{(x_1,x_2): -h<x_2<0,\ |x_1|<-x_2/K_0\}
\]
is contained in $(\mathbb R^2\setminus P)\cap \tilde{B}_2(x,\delta)$.
Moreover $|C^+(h)|=|C^-(h)|=h^2/K_0$.

If $\chi_Q\equiv 0$ on $\tilde{B}_2(x,\delta)$ then $C^+(h)\subset P\setminus Q\subset P\Delta Q$; if $\chi_Q\equiv 1$ on $\tilde{B}_2(x,\delta)$ then $C^-(h)\subset Q\setminus P\subset P\Delta Q$.
In both cases, $|P\Delta Q|\ge h^2/K_0$ for $h=\min\{r_0,\delta/2\}$, which implies
$\delta\le 2\sqrt{K_0}\,|P\Delta Q|^{1/2}$ whenever $\delta\le 2r_0$.
If $\delta>2r_0$, the same estimate with $h=r_0$ gives $|P\Delta Q|\ge r_0^2/K_0$ and thus
$\delta\le \diam(\Omega)\le \diam(\Omega)\sqrt{K_0}/r_0\cdot |P\Delta Q|^{1/2}$.
Choosing $C_H = (\kappa-1)^{-1/2}\max \{2\sqrt{K_0}, \diam(\Omega)\sqrt{K_0}/r_0\}$ yields this lemma. 
\end{proof}

\begin{lemma}\label{lem:vertex_stability_poly}
There exist $C_{\mathrm{vs}}>0$ depending only on the a priori data in Remark~\ref{rem:apriori_poly} such that, if $P^0,P^1\in\mathcal A_{1/2}$ with corresponding vertex list $v^0$ and $v^1$, then their vertices can be ordered in such a way that 
\[
\|v^0-v^1\|_{\mathrm{poly}} \le\ C_{\mathrm{vs}}\, d_H(\partial P^0,\partial P^1).
\]
\end{lemma}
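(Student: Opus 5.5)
The plan is to split into a small-distance regime and a large-distance regime. The large regime is trivial: every vertex lies in $\Omega$, so $\|v^0-v^1\|_{\mathrm{poly}}\le \diam(\Omega)=2$ for any labeling. Hence if $\delta\coloneqq d_H(\partial P^0,\partial P^1)\ge \delta_0$ for a threshold $\delta_0>0$ depending only on the a priori data, the bound holds with $C_{\mathrm{vs}}\ge 2/\delta_0$. It therefore suffices to treat $\delta<\delta_0$. Here we must show that each vertex of $P^0$ has a unique vertex of $P^1$ within distance $C\delta$, and that this matching is a bijection respecting the cyclic counterclockwise order.

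The key geometric step is a vertex localization estimate. Fix a vertex $v^0_i$ of $P^0$, with interior angle $\beta_i\in(\beta_0/2,\pi-\beta_0/2)$ and incident edges of length $>d_1/2$. Consider the two points $a_\pm$ on the incident edges at distance $s\coloneqq d_1/4$ from $v^0_i$, together with $v^0_i$ itself. By the Hausdorff bound, each of these points lies within $\delta$ of $\partial P^1$.

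Suppose, for contradiction, that $\partial P^1$ has no vertex in the ball $B_2(v^0_i,C\delta)$ with $C$ large. Then $\partial P^1\cap B_2(v^0_i,C\delta)$ consists of pieces of straight edges. Because $P^1$ is convex and its edges have length $>d_1/2$, a single edge of $P^1$ would pass within $\delta$ of $v^0_i$. Convexity of $P^1$, combined with the requirement that points near $a_\pm$ are also $\delta$-close to $\partial P^1$, then forces $\partial P^1$ to bend by an angle of at least $\pi-\beta_i>\beta_0/2$ somewhere near $v^0_i$. This contradicts the absence of a vertex once $C\gtrsim 1/\sin(\beta_0/4)$.

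More concretely, I would argue in the bisector coordinates of Lemma~\ref{lem:A4_from_A23_conv}. Near $v^0_i$, the boundary $\partial P^0$ is the graph of $\max\{\phi_-,\phi_+\}$ with slopes $\pm\cot(\beta_i/2)$. A straight line that stays within $\delta$ of both rays on $[-s,s]$ must have its slope within $O(\delta/s)$ of both $+\cot(\beta_i/2)$ and $-\cot(\beta_i/2)$. Since these slopes differ by a quantity bounded below in terms of $\beta_0$, this is impossible for $\delta<\delta_0$. The Hausdorff control is symmetric, so $\partial P^0$ also stays near $\partial P^1$, and the two-sided localization gives a vertex $w$ of $P^1$ with $|w-v^0_i|\le C_1\delta$, where $C_1$ depends on $\beta_0,d_1$. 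A second corner of $P^1$ cannot appear in that ball, since its vertices are at least $d_1/2$ apart; so choosing $\delta_0$ with $2C_1\delta_0<d_1/2$ makes the matched vertex unique.

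It remains to check that the matching is a bijection preserving the order. By the symmetric argument, every vertex of $P^1$ is $C_1\delta$-close to a vertex of $P^0$. Uniqueness on both sides then gives a bijection, so both polygons have the same number $n_{\mathrm v}$ of vertices, consistent with the definition. Orientation is preserved because adjacent vertices of $P^0$ are joined by an edge that is $\delta$-close to $\partial P^1$, so their partners must be adjacent on $\partial P^1$. Consequently the labeling of $v^1$ can be chosen as a cyclic shift matching $v^0$, both being counterclockwise. The result is $\|v^0-v^1\|_{\mathrm{poly}}\le C_1\delta$, and we take $C_{\mathrm{vs}}=\max\{C_1,2/\delta_0\}$.

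The main obstacle is making the corner-detection step quantitative and uniform, namely showing that a convex polygonal curve $\delta$-close in Hausdorff distance to a corner of angle bounded away from $\pi$ must have a vertex within $O(\delta)$ of it. My first attempt would be the slope argument above, with an explicit constant of order $1/\sin(\beta_0/4)$.
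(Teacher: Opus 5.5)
Your split into $d_H\ge\delta_0$ (trivial bound) and $d_H<\delta_0$ is the same as the paper's. However, the paper does not prove the small-distance estimate; it quotes Proposition~3.3 of \citet{beretta2022global} on the closed class $\widetilde{\mathcal A}_{1/2}$. Your by-hand replacement has a gap at the step that carries all the content: showing that a vertex of $P^1$ lies within $C_1\delta$ of $v_i^0$. Your concrete slope argument works on the fixed window of size $s=d_1/4$. It therefore only shows that one line cannot follow both incident rays across that whole window, i.e.\ that $\partial P^1$ bends somewhere within distance $O(d_1)$ of $v_i^0$. That is an $O(1)$ localization, and the sentence about ``two-sided localization'' asserts the linear bound rather than deriving it. The contradiction paragraph before it is also incomplete. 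Having no vertex of $P^1$ in $B_2(v_i^0,C\delta)$ does not mean only one edge meets that ball. Two adjacent edges whose common vertex lies just outside it can both cross it, since interior angles need only exceed $\beta_0/2<\pi/4$. Two non-adjacent edges crossing it are not ruled out a priori either.

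You can repair this by working at scale $\rho=C\delta/2$ instead of $s$. For $C\ge 4$, the points $v_i^0$ and $v_i^0+\tfrac{\rho}{2}u_\pm$ on the incident edges have nearest points on $\partial P^1$ inside $B_2(v_i^0,\rho)$. Then split into cases according to which edges of $P^1$ meet this ball.

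\emph{One edge.} If a single edge meets the ball, the three points lie within $\delta$ of one line. So the smallest altitude of their triangle, which is $\tfrac{\rho}{2}\min\{\cos(\beta_i/2),\sin\beta_i\}>\tfrac{\rho}{2}\sin(\beta_0/4)$, is at most $2\delta$. This fails for $C\ge 8/\sin(\beta_0/4)$.

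\emph{Two adjacent edges.} Their common vertex lies within $\rho/\sin(\beta_0/4)$ of $v_i^0$, which is exactly the $O(\delta)$ vertex you need.

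\emph{Non-adjacent edges.} These are excluded for small $\delta_0$ by a uniform lower bound on the distance between non-adjacent edges. That bound follows from compactness of $\widetilde{\mathcal A}_{1/2}$ in vertex space, as in the proof of Lemma~\ref{lem:K_compact_poly}.

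The same compactness also supplies the uniform separation of all vertex pairs that your uniqueness step needs; non-adjacent vertices need not be $d_1/2$ apart. Your claim that partners of adjacent vertices are adjacent still needs a proof, for instance by comparing the angular order of the vertices about a common interior point. Otherwise, simply cite the proposition as the paper does.
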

\begin{proof}[Proof of Lemma ~\ref{lem:vertex_stability_poly}]
Let $\widetilde{\mathcal{A}}_{1/2}$ be the non-strict relaxed class with parameters $(d_0/2, d_1/2, \beta_0/2)$, i.e., $\widetilde{\mathcal{A}}_{1/2} = \{P\subset \Omega, \dist(P, \partial \Omega) \geq d_0/2, |v_{i+1}-v_{i}|\geq d_1/2, \frac{\beta_0}{2} \leq \beta_i(v)\leq \pi-\frac{\beta_0}{2}, \partial P \text{ is simple} \}$. Therefore $\mathcal{A}_{1/2} \subset \widetilde{\mathcal{A}}_{1/2}$. 
By Proposition 3.3 in \citet{beretta2022global}, there exist constants $\delta_0>0$ and $C_{\mathrm{0}}>0$,
depending only on the a priori data such that, if $P^0,P^1\in \widetilde{\mathcal{A}}_{1/2} $ satisfy $d_H(\partial P^0,\partial P^1)\le \delta_0$, then the vertices can be ordered in such a way that 
\[
\|v^0-v^1\|_{\mathrm{poly}} \leq C_{0}\, d_H(\partial P^0,\partial P^1).
\]
It suffices to prove the case when $d_H(\partial P^0,\partial P^1) > \delta_0$. Suppose $d_H(\partial P^0,\partial P^1) > \delta_0$, 
\[
\|v^0-v^1\|_{\mathrm{poly}} \leq \|v^0\|_{\mathrm{poly}}+\|v^1\|_{\mathrm{poly}} \leq 2 \diam(\Omega) \frac{d_H(\partial P^0,\partial P^1)}{\delta_0} \,.
\]
Taking $C_{\mathrm{vs}} = \max(C_0,\frac{2\diam(\Omega)}{\delta_0} )$ finishes the proof.
\end{proof}

\begin{lemma}[$L^1$-stability under vertex perturbations]\label{lem:symdiff_poly}
Fix $P^\star\in\mathcal A_{1/2}$ and consider the neighborhood $\varphi_{P^\star}(U_{P^\star})$
from Lemma~\ref{lem:chart-M}. Let $v,w\in\varphi_{P^\star}(U_{P^\star})$ and set
$\delta\coloneqq \|v-w\|_{\mathrm{poly}}$.
Then
\[
\|\chi_{P(v)}-\chi_{P(w)}\|_{L^1(\Omega)}
= |P(v)\Delta P(w)| \le C\,\delta\,.
\]
Here $C=2n_{\mathrm v}(\pi+1) \diam(\Omega)>0$ is a constant.
\end{lemma}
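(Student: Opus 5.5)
The first identity is immediate: $\chi_{P(v)}-\chi_{P(w)}$ takes the values $\pm1$ exactly on $P(v)\Delta P(w)$ and $0$ elsewhere, so its $L^1$ norm is $|P(v)\Delta P(w)|$. The work is the bound $|P(v)\Delta P(w)|\le C\delta$. The plan is to show that the symmetric difference lies in a thin tubular neighbourhood of $\partial P(w)$, and then estimate the area of that tube.

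\textbf{Step 1 (localization to a tube).} Both $P(v)$ and $P(w)$ are simple polygons, by Lemma~\ref{lem:chart-M} and Lemma~\ref{lem:rho_s}. Let $x\in P(v)\Delta P(w)$, say $x\in P(v)\setminus P(w)$. Deform the vertices linearly, $v(t)=(1-t)w+tv$ for $t\in[0,1]$. Since $\mathcal O_{P^\star}$ is a $\|\cdot\|_{\mathrm{poly}}$-ball, hence convex, every $v(t)$ stays in the chart, so each $\partial P(v(t))$ is a simple closed curve. Moreover each point of $\partial P(v(t))$ lies within $\delta$ of $\partial P(w)$, by the interpolation argument of Lemma~\ref{lem:dist-boundary-perturb} with the pair $(v(t),w)$. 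Suppose $\dist(x,\partial P(w))>\delta$. Then $x\notin\partial P(v(t))$ for every $t$, so the winding number of $\partial P(v(t))$ about $x$ is constant in $t$. This contradicts $x\in P(v)$ and $x\notin P(w)$. Hence
\[
P(v)\Delta P(w)\subset T_\delta\coloneqq \{x:\dist(x,\partial P(w))\le\delta\}.
\]
A simpler alternative is to take $x$ on the segment joining $x$ to a point where the status changes, but the homotopy/winding argument is cleaner.

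\textbf{Step 2 (area of the tube).} Write $T_\delta=\bigcup_i\{x:\dist(x,e_i(w))\le\delta\}$. Each set in the union is a stadium with area $2\delta|e_i(w)|+\pi\delta^2$. Summing over the $n_{\mathrm v}$ edges gives
\[
|T_\delta|\le 2\delta\,\Per(P(w))+n_{\mathrm v}\pi\delta^2 .
\]
Now use $|e_i(w)|\le\diam(\Omega)$, so that $\Per(P(w))\le n_{\mathrm v}\diam(\Omega)$. Also $\delta<2R_{P^\star}\le\diam(\Omega)$, using that both $v,w$ lie in the chart ball and that $R_{P^\star}\le\frac18\min|v^\star_i-v^\star_j|$. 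Together these give
\[
|T_\delta|\le 2n_{\mathrm v}\diam(\Omega)\,\delta+n_{\mathrm v}\pi\diam(\Omega)\,\delta\le 2n_{\mathrm v}(\pi+1)\diam(\Omega)\,\delta,
\]
which is the claimed constant, with room to spare.

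The main obstacle is Step~1, i.e.\ rigorously justifying the containment of the symmetric difference in the $\delta$-tube. The winding-number argument needs the curves $\partial P(v(t))$ to avoid $x$ throughout the homotopy, which follows from the uniform Hausdorff bound. It also implicitly uses that interior and exterior are distinguished by winding number $\pm1$ versus $0$, which holds for simple closed polygons. Convexity of the chart ball is what guarantees that the linear homotopy never leaves the class of simple polygons.
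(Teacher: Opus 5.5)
Your proof is correct and follows essentially the same route as the paper: a linear homotopy $v(t)$ in vertex space, the Hausdorff bound from Lemma~\ref{lem:dist-boundary-perturb} to trap $P(v)\Delta P(w)$ in a $\delta$-tube around one of the two boundaries, and then the stadium area estimate using $\Per\le n_{\mathrm v}\diam(\Omega)$ and $\delta<2R_{P^\star}$. The only difference is how you certify the tube containment: you use homotopy invariance of the winding number, whereas the paper applies the intermediate value theorem to $g(t)=\dist(x,P_t^c)-\dist(x,P_t)$ to find $t^\ast$ with $x\in\partial P_{t^\ast}$. Your version avoids having to justify continuity of $t\mapsto\dist(x,P_t)$ and does not need the intermediate polygons to be simple, so your closing remark about convexity of the chart ball is not essential.
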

\begin{proof}[Proof of Lemma~\ref{lem:symdiff_poly}]
Consider the segment $v(t)=(1-t)v+tw$ for $t\in[0,1]$ and write $P_t\coloneqq P(v(t))$. 
By Lemma~\ref{lem:chart-M}, all $v(t)$ are relaxed admissible, thus $P_t$ is well-defined and $P_t\subset\Omega$.
Let $x\in P(v)\Delta P(w)$, we first show there exists $t^\ast\in[0,1]$ such that $x\in\partial P_{t^\ast}$.
Define
\[
g(t)\coloneqq \dist(x,P_t^c)-\dist(x,P_t),
\]
where the map $t\mapsto g(t)$ is continuous.
If $g(0)=0$ or $g(1)=0$, $x \in \partial P(v)$ or $x \in \partial P(w)$; otherwise  $g(0)$ and $g(1)$ have opposite signs because $x$ belongs
to exactly one of $P_0$ and $P_1$. Hence there exists $t^\ast\in(0,1)$ such that $g(t^\ast)=0$, which implies
$\dist(x,P_{t^\ast})=\dist(x,P_{t^\ast}^c)=0$ and therefore $x\in\partial P_{t^\ast}$.

By Lemma~\ref{lem:dist-boundary-perturb} we have
\[
d_H(\partial P_{t^\ast},\partial P(v))
\le \|v(t^\ast)-v\|_{\mathrm{poly}}\le \delta,
\]
so $\dist(x,\partial P(v))\le\delta$. This shows
\[
P(v)\Delta P(w)\subset \{x\in\R^2:\dist(x,\partial P(v))\le\delta\}.
\]

Now $\partial P(v)$ is a union of $n_{\mathrm v}$ line segments. The $\delta$-neighborhood of a segment of
length $\ell$ has area at most $2\delta\,\ell+\pi\delta^2$ (rectangle plus two half-disks).
Summing over all edges yields
\[
|P(v)\Delta P(w)|
\le 2\delta\,\Per(P(v)) + n_{\mathrm v}\pi\delta^2.
\]
Finally, $\Per(P(v))\le n_{\mathrm v}\diam(\Omega)$ and $\delta\le 2R_{P^\star}\leq 2 \diam(\Omega)$ on the chart, hence $|P(v)\Delta P(w)| \le C\,\delta$ with $C=2n_{\mathrm v}(\pi+1) \diam(\Omega)$.
\end{proof}

We are now ready to prove Lemma ~\ref{lem:chart-M}. 
\begin{proof}[Proof of Lemma ~\ref{lem:chart-M}]

If $\|v-v^\star\|_{\mathrm{poly}}<R_{P^\star}$, then $R_{P^\star}\le \rho_{P^\star}$ implies $P(v)\in\mathcal A_{1/2}$
by Lemma~\ref{lem:rho_s}, hence $\gamma_{P(v)}\in M$ and $U_{P^\star}\subset M$. 

Moreover, $R_{P^\star}\le \frac14\min_{i\neq j}|v_i^\star-v_j^\star|$ implies that the balls
$B_2(v_i^\star,R_{P^\star})$ are pairwise disjoint. Therefore each perturbed vertex $v_i$ lies in exactly one ball of $v_i^\star$,
so no cyclic shift or reversal of the vertex labeling is possible inside $U_{P^\star}$; thus $\varphi_{P^\star}$ is well-defined
injective.
Next we show that $\varphi_{P^\star}$ is a homeomorphism.

(continuity of $\varphi_{P^\star}^{-1}$).
By Lemma~\ref{lem:symdiff_poly}, the map
\[
\varphi_{P^\star}^{-1}:\mathcal O_{P^\star}\to L^1(\Omega),\qquad v\mapsto \gamma_{P(v)},
\]
is (locally) Lipschitz in $L^1(\Omega)$ on $\mathcal O_{P^\star}$, hence continuous.

(continuity of $\varphi_{P^\star}$).
For each $\gamma_{P_0}$  in $U_{P^\star}$, for any sequence $\gamma_{P_k}$ in $U_{P^\star}$ converges to $\gamma_{P_0}$, $|P_k \Delta P_0| = \frac{1}{(\kappa-1)}\|\gamma_{P_k}-\gamma_{P_0} \|_{L^1(\Omega)} \to 0$. By Lemma~\ref{lem:L1_to_H_poly}, $d_{H}(\partial P_k, \partial P_0)\to 0$, and then by Lemma~\ref{lem:vertex_stability_poly}, we have $\|v^{k} - v^0\|_{\mathrm{poly}} \to 0$ for some $v^k$ and $v^0$ such that $P(v^k)=P_k$ and $P(v^0)=P_0$. Thus  $\varphi_{P^\star}$ is continous. 

($U_{P^\star}$ is open in $M$).
For each $\gamma_{P_0}$  in $U_{P^\star}$, we show that $\gamma_{P_0}$ is the interior point in $U_{P^\star}$. Assume by contradiction that for every $k>=1$, there exists $\gamma_{P_k} \in M \cap B_{L^1}(\gamma_{P_0}, \frac{1}{k})$ such that  $\gamma_{P_k} \notin U_{P^\star}$. Then $\|v^k-v^\star\|_{\mathrm{poly}}\geq R_{P^\star}$ for some $v^k$ such that $P(v^k) = P_k$. By the triangle inequality we have
\begin{equation}
\label{eq:contract_U_open}
    0 < R_{P^\star} - \|v^0-v^\star\|_{\mathrm{poly}} \leq \|v^k-v^0\|_{\mathrm{poly}}, \qquad k\geq 1 \,,
\end{equation}

for some $v^0$ such that $P(v^0) = P_0$. Since $\|\gamma_{P_k}-\gamma_{P_0} \|_{L^1(\Omega)} \to 0$, we have $\|v^k-v^0\|_{\mathrm{poly}} \to 0$, which contracts to \Eqref{eq:contract_U_open}.
\end{proof}

\section{Proof for Lemma~\ref{lem:K_compact_poly}}
\label{sec:K_compact_poly}
The following proof follows the idea in the Proof of theorem 3.1 from \cite{alberti2022inverse}.
\begin{proof}
Let $(\gamma_{P_j})_j\subset K$ with $P_j=P(v^j)$ and $v^j=(v^j_1,\dots,v^j_{n_{\mathrm v}})$
the counterclockwise vertex list.
Set $\Omega_{d_0}:=\{x\in\R^2:\dist(x,\partial\Omega)\ge d_0\}=\overline{B(0,1-d_0)}$.
By (A1) we have $P_j\subset \Omega_{d_0}$, hence $v^j_i\in\Omega_{d_0}$ for all $i,j$.

Since $\Omega_{d_0}$ is compact and there are only finitely many cyclic relabelings,
we may pass to a subsequence indexed by $j_m$ and relabel the vertices by a fixed cyclic shift so that $v^{j_m}\to v^\ast$ in $\R^{2n_{\mathrm v}}$.
Define $P^\ast:=P(v^\ast)$ and we show that $P^\ast\in\mathcal A$.
\begin{itemize}
\item[(A1)] Since $\Omega_{d_0}$ is convex, $P^\ast=\mathrm{conv}\{v^\ast_i\}\subset \Omega_{d_0}\subset\Omega$,
and therefore $\dist(P^\ast,\partial\Omega)\ge d_0$.
\item[(A2)] For each $i$, the map $v\mapsto |v_{i+1}-v_i|$ is continuous, hence
$|v^\ast_{i+1}-v^\ast_i|=\lim_j |v^j_{i+1}-v^j_i|\ge d_1$.
\item[(A3)] For each $i$, the interior angle $\beta_i(v)$ is a continuous function of the adjacent edge vectors
(as long as $|v_{i}-v_{i-1}|,|v_{i+1}-v_i|>0$, ensured by (A2)).
Thus $\beta_i(v^\ast)=\lim_j\beta_i(v^j)$ and
$\beta_0\le \beta_i(v^\ast)\le \pi-\beta_0$.
Moreover, the uniform bound $\beta_i(v^\ast)\le\pi-\beta_0$ rules out collinearity of adjacent edges,
so $P^\ast$ is a convex polygon with the same counterclockwise ordering, hence simple.
\end{itemize}
Thus $P^\ast\in\mathcal A$ and $\gamma_{P^\ast}\in K$.

Finally, by Lemma~\ref{lem:symdiff_poly}, for large $j_m$ we have
$|P_{j_m}\Delta P^\ast|\to 0$, hence
\[
\|\gamma_{P_{j_m}}-\gamma_{P^\ast}\|_{L^1(\Omega)}
=|\kappa-1|\,|P_j\Delta P^\ast|\to 0.
\]
Therefore every sequence in $K$ admits an $L^1(\Omega)$-convergent subsequence with limit in $K$,
so $K$ is compact in $L^1(\Omega)$.
\end{proof}

\section{Proof for Lemma~\ref{lem:finite-atlas}}
\begin{proof}[Proof of Lemma ~\ref{lem:finite-atlas}]
Since $M\subset L^{1}(\Omega)$ is endowed with the $L^{1}$-subspace topology (equivalently, the inclusion
$\iota:M\hookrightarrow L^{1}(\Omega)$ is a continuous embedding), compactness of $K$ in $L^{1}(\Omega)$ implies
that $K$ is compact in $M$.

For each $\gamma_P\in K$, the set
\[
V_{P}=\Big\{\gamma\in U_P:\ \|\varphi_P(\gamma)-v^{P}\|_{\mathrm{poly}}<\tfrac12 R_{P}\Big\}
=\varphi_P^{-1}\!\bigl(\{v:\|v-v^{P}\|_{\mathrm{poly}}<\tfrac12R_P\}\bigr)
\]
is open in $M$ (it is the preimage of an open ball under the continuous chart map $\varphi_P$).
Moreover $\gamma_P\in V_P$, hence $\{V_P\}_{\gamma_P\in K}$ is an open cover of $K$.
By compactness of $K$ in $M$, there exist $P_1,\dots,P_m\in\mathcal A$ such that
$K\subset\bigcup_{i=1}^m V_{P_i}$.

Fix $i\in\{1,\dots,m\}$ and by the choice of $R_{P_i}$ we have $B_i\subset \varphi_{P_i}(U_{P_i})$.
Since $B_i$ is compact in $\R^{2n_{\mathrm v}}$ and $\varphi_{P_i}^{-1}$ is continuous on $\varphi_{P_i}(U_{P_i})$,
the set $\varphi_{P_i}^{-1}(B_i)$ is compact (hence closed) in $M$ and contains $V_{P_i}$.
Therefore
\[
\overline{V}_{P_i}\subset \varphi_{P_i}^{-1}(B_i)\subset U_{P_i},
\]
so $\varphi_{P_i}$ is well-defined on $\overline{V}_{P_i}$.

Now $K\cap \overline{V}_{P_i}$ is closed in the compact set $K$, hence compact in $M$.
By continuity of $\varphi_{P_i}$ on $U_{P_i}$, the image
\[
E_i=\varphi_{P_i}\bigl(K\cap \overline{V}_{P_i}\bigr)
\]
is compact in $\R^{2n_{\mathrm v}}$. Moreover, from $\overline{V}_{P_i}\subset \varphi_{P_i}^{-1}(B_i)$ we obtain
$E_i\subset B_i$, so $\bigcup_{i=1}^m E_i$ is bounded.

Finally, $B_i$ is convex (a closed ball of a norm), hence $\operatorname{conv}(E_i)\subset B_i$.
Together with $B_i\subset \varphi_{P_i}(U_{P_i})$, this yields
\[
\operatorname{conv}(E_i)\subset \varphi_{P_i}(U_{P_i}),\qquad i=1,\dots,m.
\]
\end{proof}

\section{Proof for Lemma~\ref{lem:volumetric}}
\begin{proof}
Since $E\subset B_2(0,R)$, monotonicity of covering numbers gives
$\mathcal{N}_\delta(E;\|\cdot\|_2)\le \mathcal{N}_\delta(B_2(0,R);\|\cdot\|_2)$.

If $\delta\ge R$, then $B_2(0,R)\subset B_2(0,\delta)$, hence
$\mathcal{N}_\delta(B_2(0,R);\|\cdot\|_2)=1$.

Assume now $0<\delta<R$. Let $\{x_j\}_{j=1}^n\subset B_2(0,R)$ be a maximal $\delta$-separated set
(i.e., $\|x_i-x_j\|_2>\delta$ for $i\neq j$ and it is maximal by inclusion).
By maximality, $\{x_j\}$ is a $\delta$-net of $B_2(0,R)$, hence
$N_\delta(B_2(0,R);\|\cdot\|_2)\le n$.
Moreover, the balls $\{B_2(x_j,\delta/2)\}_{j=1}^n$ are disjoint and satisfy
$B_2(x_j,\delta/2)\subset B_2(0,R+\delta/2)$, so a volume comparison yields
\[
n\,\mathrm{vol}\big(B_2(0,\delta/2)\big)
\ \le\ \mathrm{vol}\big(B_2(0,R+\delta/2)\big)
\ =\ \Big(\frac{R+\delta/2}{\delta/2}\Big)^{2n_{\mathrm v}}\mathrm{vol}\big(B_2(0,\delta/2)\big).
\]
Therefore $n\le (1+2R/\delta)^{2n_{\mathrm v}}$. Since $\delta<R$ implies $1+2R/\delta\le 3R/\delta$, we obtain
$\mathcal{N}_\delta(B_2(0,R);\|\cdot\|_2)\le (3R/\delta)^{2n_{\mathrm v}}$.
Combining the cases $\delta\ge R$ and $0<\delta<R$ proves the claim for some absolute constant
$C_{\mathrm{vol}}$ (e.g.\ one may take $C_{\mathrm{vol}}=3$).
\end{proof}

\section{Effect of the $1/\sqrt{N}$}
\label{sec:normalization}
Although the TV bound \Eqref{eq:TV_bound_cond} does not display an explicit dependence on the discretization
level $N$ (or the ambient dimension $d_N$), this is facilitated by the $1/\sqrt{N}$ normalization built into
the measurement map $F_N$.

To illustrate its role, consider the unnormalized measurements
\[
\widetilde F_N(\gamma)\ \coloneqq \ \sqrt{N}\,F_N(\gamma),\qquad \widetilde F_N(K)\ \coloneqq \ \sqrt{N}\,F_N(K).
\]
For every $\varepsilon>0$, scaling implies
\[
\mathcal{N}_\varepsilon\!\big(\widetilde F_N(K);\|\cdot\|_2\big)\ =\ \mathcal{N}_{\varepsilon/\sqrt{N}}\!\big(F_N(K);\|\cdot\|_2\big).
\]
Using the covering number bound from the proof of Theorem~\ref{thm:FN-main}, there exists a constant
$C>0$ (independent of $N$) such that
\[
\mathcal{N}_\eta\big(F_N(K)\big)\ \le\ m\max\Big\{1,\Big(\frac{C}{\eta}\Big)^{2n_{\mathrm v}}\Big\}.
\]
Taking $\varepsilon=\epsilon_0=T^{-c_{\epsilon_0}}$ yields an additional additive $n_{\mathrm v}\log N$ term in
$\log \mathcal{N}_{\epsilon_0}(\widetilde F_N(K))$ (corresponding to the factor $(\sqrt{N})^{2n_{\mathrm v}}=N^{n_{\mathrm v}}$),
while the intrinsic exponent $2n_{\mathrm v}$ is unchanged.
Similarly, the support radius rescales as
$\sup_{\gamma\in K}\|\widetilde F_N(\gamma)\|_2=\sqrt{N}\sup_{\gamma\in K}\|F_N(\gamma)\|_2\le \sqrt{N}\,r_K$.

\end{document}